\tikzset{cross/.style={cross out, draw=black, minimum size=2*(#1-\pgflinewidth), inner sep=0pt, outer sep=0pt},
cross/.default={1pt}}
\renewcommand{\paragraph}{%
  \@startsection{paragraph}{4}%
  {\z@}{1.5ex \@plus 1ex \@minus .2ex}{-1em}%
  {\normalfont\normalsize\bfseries}%
}
\theoremstyle{plain}
\newtheorem{Proposition}{\textbf{Proposition}}[section]
\newtheorem{Lemma}[Proposition]{\textbf{Lemma}}
\newtheorem{Corollary}[Proposition]{\textbf{Corollary}}
\theoremstyle{plain}
\newtheorem{Theorem}[Proposition]{Theorem}}
\theoremstyle{definition}
\newtheorem{Definition}[Proposition]{Definition}}
\theoremstyle{remark}
\newtheorem{Remark}[Proposition]{Remark}
\numberwithin{equation}{section}
\newcommand{\eps}{\varepsilon}
\newcommand{\bbE}{\mathbb{E}}
\newcommand{\bbX}{\mathbb{X}}
\newcommand{\N}{\mathbb{N}}
\newcommand{\Z}{\mathbb{Z}}
\newcommand{\R}{\mathbb{R}}
\newcommand{\cA}{\mathcal{A}}
\newcommand{\cE}{\mathcal{E}}
\newcommand{\cG}{\mathcal{G}}
\newcommand{\rA}{\mathrm{A}}
\newcommand{\rG}{\mathrm{G}}
\newcommand{\rL}{\mathrm{L}}
\newcommand{\rP}{\mathrm{P}}
\let\limsup\relax
\let\liminf\relax
\DeclareMathOperator* \limsup {\overline{lim}}
\DeclareMathOperator* \liminf {\underline{lim}}
\DeclareMathOperator*{\argmin}{arg\,min}
\let\originalleft\left
\let\originalright\right
\renewcommand{\left}{\mathopen{}\mathclose\bgroup\originalleft}
\renewcommand{\right}{\aftergroup\egroup\originalright}
\newcommand{\p}[1]{\left( #1 \right)}
\newcommand{\acc}[1]{\left\{ #1 \right\}}
\newcommand{\cro}[1]{\left[ #1 \right]}
\newcommand{\set}[2]{\acc{#1 \;\middle\vert\; #2 } }
\newcommand{\ind}[1]{\mathds{1}_{#1}}
\newcommand{\dpe}{\coloneqq}
\newcommand{\eol}{\nonumber\\}
\def\restriction#1#2{\mathchoice
              {\setbox1\hbox{${\displaystyle #1}_{\scriptstyle #2}$}
              \restrictionaux{#1}{#2}}
              {\setbox1\hbox{${\textstyle #1}_{\scriptstyle #2}$}
              \restrictionaux{#1}{#2}}
              {\setbox1\hbox{${\scriptstyle #1}_{\scriptscriptstyle #2}$}
              \restrictionaux{#1}{#2}}
              {\setbox1\hbox{${\scriptscriptstyle #1}_{\scriptscriptstyle #2}$}
              \restrictionaux{#1}{#2}}}
\def\restrictionaux#1#2{{#1\,\smash{\vrule height .8\ht1 depth .85\dp1}}_{\,#2}} 
\newcommand{\norme}[2][]{\left\| #2 \right\|_{#1}}
\newcommand{\ceil}[1]{\!\left\lceil #1 \right\rceil\!}
\newcommand{\floor}[1]{\!\left\lfloor #1 \right\rfloor\!}
\newcommand{\ball}[2][]{\mathrm{B}_{#1}\p{#2}}
\newcommand{\clball}[2][]{\overline{\mathrm{B}}_{#1}\p{#2}}
\newcommand{\intervalle}[4]{#1#2\,,#3#4}
\newcommand{\intervalleff}[2]{\intervalle{\left[}{#1}{#2}{\right]}}
\newcommand{\intervalleof}[2]{\intervalle{\left(}{#1}{#2}{\right]}}
\newcommand{\intervallefo}[2]{\intervalle{\left[}{#1}{#2}{\right)}}
\newcommand{\intervalleoo}[2]{\intervalle{\left(}{#1}{#2}{\right)}}
\newcommand{\intint}[2]{\left\llbracket#1\,,#2\right\rrbracket}
\renewcommand{\d}{\mathrm{d}}
\DeclareMathOperator \diam {diam}
\DeclareMathOperator \Pol {Pol}
\newcommand{\base}[1]{\mathrm e_{#1}}
\newcommand\ps[2]{\left\langle #1, #2 \right\rangle}
\newcommand{\Leb}[1][d]{\operatorname{Leb}^{#1} }
\newcommand{\Dirac}[1]{\delta_{#1}}
\newcommand{\E}[2][]{\mathbb{E}_{#1} \left[ #2\right]}
\newcommand{\Pb}[2][]{\mathbb{P}_{#1}\left( #2\right)}
\newcommand{\Path}[1]{\overset{#1}{\rightsquigarrow}}
\newcommand{\concat}{\mathbin{*}}
\newcommand{\pro}{\mathcal{N}}
\newcommand{\projpro}{\mathcal{N^*}}
\newcommand{\proDirac}{ \overline{ \mathcal{N} } }
\newcommand{\progen}{\mathfrak{N}}
\newcommand{\ProSpace}[1][\R^d \times \intervalleoo0\infty]{\mathbf{N}\p{#1}}
\newcommand{\ProSpaceFinite}[1][\R^d \times \intervalleoo0\infty]{\mathbf{N}_{<\infty}\p{#1}}
\newcommand{\DiracSpace}[1][\R^d \times \intervalleoo0\infty]{\mathbf{N}_{\le 1}\p{#1} }
\newcommand{\AUXAnimal}{\mathcal{A}}
\newcommand{\AUXPath}{\mathcal{P}}
\newcommand{\AUXGeneric}{\mathcal{G}}
\newcommand{\AUXMassAnimal}{\mathrm{A}}
\newcommand{\AUXMassPath}{\mathrm{P}}
\newcommand{\AUXMassGeneric}{\mathrm{G}}
\newcommand{\AUXLimAnimal}{\mathbf{A}}
\newcommand{\AUXLimGeneric}{\mathbf{G}}
\newcommand{\AUXBiancre}[3]{\p{#1 \leftrightarrow #2, #3}}
\newcommand{\AUXDiamant}[2]{#1^{#2}}
\newcommand{\AUXFree}[1]{#1}
\newcommand{\Mass}[1]{\mathbf{m}\p{#1}}
\newcommand{\Masspen}[2][q]{\mathbf{m}^{(#1)}\p{#2}  }
\newcommand{\SetADF}[3]{\AUXFree{\AUXAnimal}\AUXBiancre{#1}{#2}{#3} }
\newcommand{\SetAUF}[1]{\AUXFree{\AUXAnimal}\p{#1} }
\newcommand{\SetPDC}[4]{\AUXDiamant{\AUXPath}{#1}\AUXBiancre{#2}{#3}{#4} }
\newcommand{\SetPDF}[3]{\AUXFree{\AUXPath}\AUXBiancre{#1}{#2}{#3} }
\newcommand{\SetPUF}[1]{\AUXFree{\AUXPath}\p{#1} }
\newcommand{\SetGDC}[4]{\AUXDiamant{\AUXGeneric}{#1}\AUXBiancre{#2}{#3}{#4} }
\newcommand{\SetGDF}[3]{\AUXFree{\AUXGeneric}\AUXBiancre{#1}{#2}{#3} }
\newcommand{\SetGUF}[1]{\AUXFree{\AUXGeneric}\p{#1} }
\newcommand{\SetADCalt}[4]{\AUXDiamant{\AUXAnimal}{*, #1}\AUXBiancre{#2}{#3}{#4} }
\newcommand{\SetADFalt}[3]{\AUXFree{\AUXAnimal}^{*}\AUXBiancre{#1}{#2}{#3} }
\newcommand{\SetAUFalt}[1]{\AUXFree{\AUXAnimal}^{*}\p{#1} }
\newcommand{\MassN}[1]{\overline{ \mathrm A }\p{#1} }
\newcommand{\MassADF}[3]{\AUXFree{\AUXMassAnimal}\AUXBiancre{#1}{#2}{#3} }
\newcommand{\MassAUF}[1]{\AUXFree{\AUXMassAnimal}\p{#1} }
\newcommand{\MassPDC}[4]{\AUXDiamant{\AUXMassPath}{#1}\AUXBiancre{#2}{#3}{#4} }
\newcommand{\MassPDF}[3]{\AUXFree{\AUXMassPath}\AUXBiancre{#1}{#2}{#3} }
\newcommand{\MassPUF}[1]{\AUXFree{\AUXMassPath}\p{#1} }
\newcommand{\MassGDC}[4]{\AUXDiamant{\AUXMassGeneric}{#1}\AUXBiancre{#2}{#3}{#4} }
\newcommand{\MassGDF}[3]{\AUXFree{\AUXMassGeneric}\AUXBiancre{#1}{#2}{#3} }
\newcommand{\MassGUF}[1]{\AUXFree{\AUXMassGeneric}\p{#1} }
\newcommand{\MassADCpen}[5]{\AUXFree{\AUXMassAnimal}^{(#5), #1}\AUXBiancre{#2}{#3}{#4} }
\newcommand{\MassADFpen}[4]{\AUXFree{\AUXMassAnimal}^{(#4)}\AUXBiancre{#1}{#2}{#3} }
\newcommand{\MassAUFpen}[2]{\AUXFree{\AUXMassAnimal}^{(#2)}\p{#1} }
\newcommand{\hatMassAUFpen}[2]{\AUXFree{\hat \AUXMassAnimal}^{(#2)}\p{#1} }
\newcommand{\MassLAUF}[1]{\mathrm{A^L}\p{#1} }
\newcommand{\LimMassG}{\AUXLimGeneric }
\newcommand{\LimMassA}{\AUXLimAnimal }
\newcommand{\LimMassLA}{\AUXLimAnimal^{\mathrm L}}
\newcommand{\FdT}{J }
\newcommand{\FdTDiamond}[1][\delta]{J^{ #1}}
\newcommand{\Diamant}[3]{\operatorname{Diam}^{#1}(#2 \leftrightarrow #3)}
\newcommand{\Cone}[3]{\operatorname{Cone}^{#1}(#2, #3)}
\newcommand{\Gap}[2]{\Diamant{1}{#1}{#2} }
\title{Concentration inequalities and large deviations for continuous greedy animals and paths }
\author{Julien \textsc{Verges}\footnote{julien.verges@univ-tours.fr}}
\date{\today}
\begin{document}

\maketitle

\abstract{Consider the continuous greedy paths model: given a $d$-dimensional Poisson point process with positive marks interpreted as masses, let $\mathrm P (\ell)$ denote the maximum mass gathered by a path of length $\ell$ starting from the origin. It is known that $\mathrm P(\ell) / \ell$ converges a.s.\ to a deterministic constant $\mathbf P$. We show that the lower-tail deviation probability for $\mathrm P(\ell)$ has order $\exp\p{-\ell^2}$ and, under exponential moment assumption on the mass distribution, that the upper-tail deviation probability has order $\exp\p{-\ell}$. In the latter regime, we prove the existence and some properties --- notably, convexity --- of the corresponding rate function. An immediate corollary is the large deviation principle at speed $\ell$ for $\mathrm P(\ell)$. Along the proof we show an upper-tail concentration inequality in the case where marks are bounded. All of the above also holds for greedy animals and have versions where the paths or animals involved have two anchors instead of one.}

\section{Introduction}
\label{sec : Intro}

\subsection{Background and aim of the paper}
\label{subsec : Intro/Background}

Let us present the model of \emph{lattice greedy animals }and \emph{paths}, first introduced by Cox, Gandolfi, Griffin and Kesten in 1993-94 \cite{Cox93,Gan94}. Consider an integer $d\ge 2$ and the standard $\Z^d$ lattice, i.e.\ the graph whose vertex set is $\Z^d$, in which two vertices are neighbours if and only if their Euclidean distance is exactly $1$. We denote by $\bbE^d$ its edge set. Let $(\Mass{v})_{v\in \Z^d}$ be an i.i.d.\ family of nonnegative random variables, with common distribution $\nu$. The variable $\Mass v$ is called the \emph{mass} of the vertex $v$. The mass of a \emph{lattice animal} (i.e.\  a connected finite subset of $\Z^d$) is defined as the sum of the masses of its vertices. Under a certain moment condition, Cox, Gandolfi, Griffin and Kesten showed that the maximal mass $\MassLAUF n$ over lattice animals of cardinal $n$, containing the origin satisfies the law of large numbers
\begin{equation}
	\label{eqn : Intro/Background/LLN}
	\frac{\MassLAUF n}{n} \xrightarrow[n\to\infty]{\text{a.s.\ and }\rL^1} \LimMassLA,
\end{equation}
where $\LimMassLA$ is a deterministic constant. Animals realizing the maximum in $\MassLAUF n$ are called \emph{greedy}. Martin \cite{Mar02} later proved the same result, under the weaker assumption
\begin{equation}
	\label{ass : Greedy} \tag{Greedy}
	\int_0^\infty \nu\p{\intervallefo t\infty}^{1/d}\d t< \infty.
\end{equation}
Besides, the result also holds for self-avoiding lattice paths originating from $0$.

A \emph{continuous} analogue of greedy paths was introduced as a tool by Gouéré-Marchand \cite{Gou08} for the study of a growth model and Gouéré-Théret \cite{Gou17} for the study of Boolean percolation. In this context a continuous path is a finite-length polygonal line in $\R^d$, and the family of masses $(\Mass{v})_{v\in \Z^d}$ is replaced by a marked Poisson point process on $\R^d$ (see Definition~\ref{def : Intro/Formal/ContinuousPath}). In \cite{Gou08} it is shown that if the mark distribution satisfies~\eqref{ass : Greedy}, then the maximal mass of a continous path with length at most $\ell$ and originating from the origin has at most linear growth. The author \cite{Article3, Article2} recently proved a functional version of~\eqref{eqn : Intro/Background/LLN} for continuous greedy paths and several variants of continuous lattice animals on $\R^d$, Theorem~\ref{thm : Intro/LLN} is a simplification of which. The formal definition of this model is given in Section~\ref{subsec : Intro/Formal}. Since we will only study the continuous model, except stated otherwise, "animal" and "path" will mean "continuous animal" and "continuous path".

In this paper we study the speed of convergence to $0$ of the probability of the \emph{lower-tail} (resp.\ \emph{upper-tail}) \emph{large deviation events}, i.e.\  the events where the greedy animal/path of length $\ell$'s mass $\MassAUF\ell$ is linearly smaller (resp.\ larger) than the limit constant $\LimMassA$. Theorem~\ref{thm : MAIN/LowerTail} states that for all $\zeta < \LimMassA$, for large $\ell$,
\begin{equation}
	\Pb{\MassAUF\ell \le \ell \zeta} \le \exp\p{-c \ell ^d },	
\end{equation}
where $c>0$ depends on $\zeta$. Theorem~\ref{thm : MAIN/UpperTail} gives a large deviation estimate for the upper-tail deviations of $\MassAUF\ell$ under an exponential moment condition: essentially, for all $\zeta > \LimMassA$, for large $\ell$,
\begin{equation}
	\label{eqn : Intro/Background/LDP_layman}
	\Pb{\MassAUF\ell \ge \ell \zeta} \approx \exp\p{-c' \ell },	
\end{equation}
where $c'$ depends on $\zeta$. Finally, Corollary~\ref{cor : LDP} gives a large deviation principle at speed $\ell$ that extends \eqref{eqn : Intro/Background/LDP_layman}. These results also hold for animals (resp.\ paths) constrained to contain (resp.\ to end at) the point $\ell \beta \base 1$, for some $\beta \in \intervallefo01$.

\subsection{Formal definition of the model}
\label{subsec : Intro/Formal}

Let $d\ge 2$ be an integer. We endow $\R^d$ with the Euclidean norm $\norme\cdot$ and denote by $\ball{x,r}$ (resp.\ $\clball{x,r}$) the open (resp.\ closed) ball of center $x$ and radius $r$ for this norm. We denote by $\Leb$ the $d$-dimensional Lebesgue measure and by $(\base 1, \dots, \base d)$ the canonical basis of $\R^d$.

\subsubsection{Poisson point processes}

Given a locally compact, second countable, Hausdorff topological space $\bbX$, we denote by $\ProSpace[\bbX]$ the space of locally finite Borel measures on $\bbX$ with values in $\N\cup \acc\infty$, endowed with the $\sigma$-algebra induced by evaluations on Borel subsets. Let $\mu$ be a locally finite measure on $\bbX$. We say that a random variable $\progen$ with values in $\ProSpace[\bbX]$ is a \emph{Poisson point process} with \emph{intensity} $\mu$ if:
\begin{enumerate}[(i)]
	\item For all Borel subsets $B\subseteq \bbX$, $\progen(B)$ follows a Poisson distribution with parameter $\mu(B)$, with the convention that a Poisson variable with infinite parameter is infinite almost surely.
	\item For all families $(B_1, \dots, B_n)$ of pairwise disjoint Borel subsets of $\bbX$, the variables $\progen(B_1)$,..., $\progen(B_n)$ are independent.
\end{enumerate}
We refer to Baccelli-Blaszczyszyn-Karray \cite{Bac20} for a detailed review of Poisson point processes, and point processes in general. Proposition 1.6.11 there states that for all measure $\mu \in \ProSpace[\bbX]$, there exists $N\in \N\cup\acc\infty$ and points $(x_n)_{1\le n \le N}$ in $\bbX$ such that
\begin{equation}
	\label{eqn : Intro/Sum_of_Diracs_General}
	\mu = \sum_{n=1}^N \Dirac{x_n}.
\end{equation}
Moreover, $N$ and $(x_n)_{1\le n \le N}$ may be chosen in a measurable way with respect to $\mu$.

In this article, we fix a nontrivial, locally finite measure $\nu$ on $\intervalleoo0\infty$ and a Poisson point process $\pro$ on $\R^d \times \intervalleoo0\infty$, with intensity $\Leb \otimes \nu$. One easily checks that $\pro$ is stationary, in the sense that for all $z\in \R^d$, $\pro$ has the same distribution as its image through $(x,t)\mapsto (x+z,t)$. Moreover, by Proposition~2.1.9 in \cite{Bac20}, $\pro$ is simple as a marked point process on $\R^d$, i.e.\ almost surely, for all $x\in \R^d$,
\begin{equation*}
	\pro\p{ \acc x \times \intervalleoo0\infty } \in \acc{0,1}.
\end{equation*}
Consequently,~\eqref{eqn : Intro/Sum_of_Diracs_General} for $\mu = \pro$ takes the form
\begin{equation}
	\pro = \sum_{n=1}^\infty \Dirac{\p{ z_n, \Mass{z_n} }},
\end{equation}
where $\p{ z_n, \Mass{z_n} }_{n\ge 1}$ is a random sequence with values in $\R^d \times \intervalleoo0\infty$.

For every Borel subset $B\subseteq \R^d$, we call \emph{mass} of $B$ the random variable
\begin{equation}
	\label{eqn : Intro/Formal/MassA}
	\Mass B \dpe \int_{B \times \intervalleoo0\infty} t\pro(\d x, \d t) = \sum_{n=1}^\infty \Mass{z_n}\ind{z_n \in B} .
\end{equation}
For all $x\in \R^d$, we will write $\Mass x = \Mass{\acc{x}}$, which causes no notation collision. We call \emph{support} of $\pro$ the random set
\begin{equation}
	\projpro \dpe \set{x\in \R^d}{\Mass x> 0 } = \set{z_n}{n\ge 1}.
\end{equation}

\subsubsection{Continuous animals and paths}

\begin{Definition}
	\label{def : Intro/Formal/ContinuousPath}
	We call \emph{(continuous) path} a finite sequence of points in $\R^d$. The \emph{length} of the path $\gamma = (x_0, \dots, x_r)$ is defined as
	\begin{equation}
		\norme \gamma \dpe \sum_{i=0}^{r-1}\norme{x_i - x_{i+1}}.
	\end{equation}
	The \emph{mass} of $\gamma$ is simply $\Mass \gamma \dpe \Mass{\acc{x_i}_{0\le i \le r} }$. For all $x,y \in \R^d$ and $\ell \ge \norme{x-y}$, we denote by $\SetPUF\ell$ the set of paths starting at $0$, with length at most $\ell$ and by $\SetPDF xy\ell$ the set of paths starting at $x$ and ending at $y$, with length at most $\ell$. We define the random variables
	\begin{equation}
		\label{def : Intro/Formal/ContinuousAnimal/MassPUF_PDF}
		\MassPUF\ell \dpe \sup_{\gamma \in \SetPUF\ell}\Mass\gamma \text{ and } \MassPDF xy\ell \dpe \sup_{\gamma \in \SetPDF xy\ell}\Mass\gamma.
	\end{equation}
\end{Definition}
\begin{Definition}
	\label{def : Intro/Formal/ContinuousAnimal}
	We call \emph{(continuous) animal} a connected, finite graph $\xi=(V,E)$, with $V\subseteq \R^d$. Given an animal $\xi = (V,E)$ we define its \emph{length} as
	\begin{equation}
		\norme \xi \dpe \sum_{\acc{x,y} \in E}\norme{x-y}
	\end{equation}
	and its \emph{mass} as $\Mass \xi \dpe \Mass V$. For all $x,y \in \R^d$ and $\ell\ge \norme{x-y}$, we denote by $\SetAUF\ell$ the set of animals containing $0$, with length at most $\ell$ and by $\SetADF xy\ell$ the set of animals containing $x$ and $y$, with length at most $\ell$. We define the random variables
	\begin{equation}
		\label{def : Intro/Formal/ContinuousAnimal/MassAUF_ADF}
		\MassAUF\ell \dpe \sup_{\xi \in \SetAUF\ell}\Mass\xi \text{ and } \MassADF xy\ell \dpe \sup_{\xi \in \SetADF xy\ell}\Mass\xi.
	\end{equation}
\end{Definition}
Note that any path may be seen as an animal. When possible without creating ambiguity, we will identify a path or an animal with its vertex set to simplify notations. Note that the supremum in~\eqref{def : Intro/Formal/ContinuousAnimal/MassPUF_PDF} may be taken over paths included in $\projpro \cup \acc0$ (resp. $\projpro\cup \acc{x,y}$ ), since removing a massless point from a path has no effect on the path's mass and decreases its length. This does not hold in~\eqref{def : Intro/Formal/ContinuousAnimal/MassAUF_ADF}, hence we get a possibly different model by restricting ourselves to animals included in $\projpro \cup \acc0$ (resp. $\projpro\cup \acc{x,y}$) or, more generally by associating a negative mass $-q$ to points outside $\projpro \cup \acc 0$ (resp. $\projpro\cup \acc{x,y}$), for a fixed $q\in\intervalleff0\infty$. However, to make concatenation arguments simpler we use the following definition.
\begin{Definition}
	\label{def : Intro/Formal/ContinuousAnimalPen}
	Let $x,y \in \R^d$ and $\ell \ge \norme{x-y}$. We denote by $\SetAUFalt\ell$ the set of animals $\xi=(V,E)$ either empty, or satisfying
	\begin{equation}
		\norme\xi + \d (0, V) \le \ell,
	\end{equation}
	by $\SetADFalt xy\ell$ the set of animals $\xi$ either empty, or satisfying
	\begin{equation}
		\norme\xi + \d (x, V) + \d (y,V) \le \ell.
	\end{equation}
	For all $q\in\intervalleff0\infty$, we define
	\begin{equation}
		\label{def : Intro/Formal/ContinuousAnimalPen/MassAUF_ADF}
		\MassAUFpen\ell q \dpe \sup_{\xi \in \SetAUFalt\ell}\Masspen\xi \text{ and } \MassADFpen xy\ell q \dpe \sup_{\xi \in \SetADFalt xy\ell}\Masspen\xi,
	\end{equation}
	where
	\begin{equation}
		\Masspen B \dpe \Mass B - q\#\p{ B \cap (\R^d \setminus \projpro ) },
	\end{equation}
	and $\#$ denotes the cardinal of a set.
\end{Definition}
Note that adding the vertex $0$ and one edge to a nonempty animal $\xi \in \SetAUFalt \ell$ creates an animal in $\SetAUF \ell$. Besides, $\SetAUF \ell \subseteq \SetAUFalt \ell$, therefore $\MassAUFpen\ell 0 = \MassAUF\ell$. Similarly, $\MassADFpen xy\ell 0 = \MassADF xy\ell$. Furthermore, we have the inequalities
\begin{equation}
	\label{eqn : Intro/Formal/BorneGrossiere}
	\MassPUF \ell \le \MassAUFpen \ell q \le \MassAUF \ell \le \MassPUF{2\ell}.
\end{equation}
Only the last one is not direct. It is a consequence of the fact the path obtained by depth-first search of an animal is at most twice as long as the animal. To shorten the statement of results and some proofs, we will use $\mathrm G$ to denote either $\mathrm P$ or $\mathrm A^{(q)}$, for $q\in \intervalleff0\infty$. Similarly, the letter $\mathcal G$ will denote either $\mathcal P$ or $\mathcal A^*$, and $\mathbf G$ will denote either $\mathbf P$ or $\mathbf A^{(q)}$ (see Theorem~\ref{thm : Intro/LLN}).

We can now state a weak version of Theorem~1.6 in \cite{Article3} that will be sufficient for our purposes. It may be seen as the analogue of~\eqref{eqn : Intro/Background/LLN} in the continuous framework. 
\begin{Theorem}
	\label{thm : Intro/LLN}
	Assume that $\nu$ satisfies~\eqref{ass : Greedy}. Then there exists a deterministic, concave, decreasing, continuous function $\LimMassG : \intervalleff01 \rightarrow \intervallefo0\infty$ such that for all $\beta \in \intervalleff01$,
	\begin{equation}
		\label{eqn : Intro/LLN/Directed}
		\frac{\MassGDF{0}{\ell \beta \base 1}{\ell} }{\ell} \xrightarrow[\ell \to\infty]{\text{a.s.\ and }\rL^1} \LimMassG(\beta).
	\end{equation}
	Moreover,
	\begin{equation}
		\label{eqn : Intro/LLN/Undirected}
		\frac{\MassGUF{\ell} }{\ell} \xrightarrow[\ell \to\infty]{\text{a.s.\ and }\rL^1} \LimMassG(0)
	\end{equation}
	and
	\begin{equation}
		\label{eqn : Intro/LLN_Animal/GeneralUB}
		\sup_{\ell >0} \frac{\E{\MassAUF \ell} }\ell \le \Cl{GREEDY}\int_0^\infty \nu\p{\intervallefo t\infty}^{1/d} \d t,
	\end{equation}
	where $\Cr{GREEDY}$ is a constant that only depends on the dimension.
\end{Theorem}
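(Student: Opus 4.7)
The plan is to run a classical superadditive ergodic argument on $\MassGDF{0}{\ell\beta\base 1}{\ell}$ to obtain~\eqref{eqn : Intro/LLN/Directed}, to deduce the shape properties of $\LimMassG$ from the same concatenation inequalities, and then to derive the undirected statement~\eqref{eqn : Intro/LLN/Undirected} by combining the directed limit with a rotation-invariance argument. The moment bound~\eqref{eqn : Intro/LLN_Animal/GeneralUB} is the essential ingredient that powers both the ergodic theorem and the uniform integrability needed for $\rL^1$ convergence.

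Fix $\beta \in \intervalleff01$. By concatenation, any two optimizers $\xi_1 \in \SetGDF{0}{\ell_1\beta\base 1}{\ell_1}$ and $\xi_2 \in \SetGDF{0}{\ell_2\beta\base 1}{\ell_2}$ chosen within the disjoint slabs $\acc{x_1 < \ell_1\beta}$ and $\acc{x_1 > \ell_1\beta}$ can be glued at the interface to form an element of $\SetGDF{0}{(\ell_1+\ell_2)\beta\base 1}{\ell_1+\ell_2}$ whose mass is $\Mass{\xi_1}+\Mass{\xi_2+\ell_1\beta\base 1}$, the junction point carrying no Poisson mass almost surely. The penalized formulation $\MassADFpen{\cdot}{\cdot}{\cdot}{q}$ serves precisely to make this gluing legitimate even when the interface point does not belong to $\projpro$. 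Applied to the stationary superadditive array $\MassGDF{m\beta L\base 1}{n\beta L\base 1}{(n-m)L}$ for a fixed reference step $L$, Kingman's subadditive ergodic theorem delivers almost-sure convergence along $L\N$ to a deterministic limit $\LimMassG(\beta)$; interpolation to arbitrary $\ell$ is handled using the trivial bound~\eqref{eqn : Intro/Formal/BorneGrossiere}, and uniform integrability for $\rL^1$ convergence is supplied by~\eqref{eqn : Intro/LLN_Animal/GeneralUB}.

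The qualitative properties of $\LimMassG$ follow from the same concatenation scheme. Superadditivity in $\beta$, obtained by gluing a $\beta_1$-segment of length $\alpha\ell$ and a $\beta_2$-segment of length $(1-\alpha)\ell$, yields concavity; monotonicity reflects the fact that the class $\SetGDF{0}{\ell\beta\base 1}{\ell}$ shrinks as $\beta$ grows, since a larger detour is forced. Concavity then gives continuity on $\intervalleoo01$, and the trivial sandwich $0 \le \LimMassG \le \LimMassG(0)$ handles the endpoints. For~\eqref{eqn : Intro/LLN/Undirected}, the inequality $\MassGUF\ell \ge \MassGDF{0}{\ell\beta\base 1}{\ell}$ yields $\liminf \MassGUF\ell/\ell \ge \LimMassG(0)$ since $\LimMassG$ is decreasing; the matching upper bound is obtained by stratifying the maximizer of $\MassGUF\ell$ according to the direction of its most distant vertex and using rotation invariance of $\pro$ through a finite cover of the unit sphere.

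The main technical obstacle is establishing~\eqref{eqn : Intro/LLN_Animal/GeneralUB} under only~\eqref{ass : Greedy}, which is weaker than any power-law moment and is what ultimately controls every other step. The strategy, following Martin's refinement of the Cox--Gandolfi--Griffin--Kesten estimate, is a coarse-graining over dyadic scales: for each threshold $t$, the mass contributed by marks $\ge t$ is bounded by controlling the occupancy of cubes of side $r \sim \nu\p{\intervallefo t\infty}^{-1/d}$ through a Chernoff-type estimate, while the length constraint limits an animal to $\grando(\ell/r)$ visited cubes. Summing over scales reconstructs the integral $\int_0^\infty \nu\p{\intervallefo t\infty}^{1/d}\d t$. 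This delicate but standard estimate is the real substance of the argument and is carried out in detail in~\cite{Article3}, to which we defer for the full proof.
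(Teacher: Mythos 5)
First, note that the paper does not prove this theorem at all: it is imported verbatim (as ``a weak version of Theorem~1.6 in \cite{Article3}''), so the only honest comparison is with the strategy of that reference, and your decision to defer the core estimate~\eqref{eqn : Intro/LLN_Animal/GeneralUB} to \cite{Article3} is consistent with how the paper itself treats the result. The outline you give (superadditive ergodic theorem for the two-anchor quantity, concavity/monotonicity from concatenation, transfer to the undirected quantity by rotation invariance, uniform integrability from the greedy bound) is indeed the standard route.

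There is, however, a genuine gap at the very first step: the free two-anchor process $\MassGDF{0}{\ell \beta \base 1}{\ell}$ is \emph{not} superadditive, and the paper says so explicitly when introducing the diamond processes (``Contrary to $\MassGDF\cdot\cdot\cdot$, they display some superadditive properties''). If you glue the true optimizers of $\SetGDF{0}{\ell_1\beta\base 1}{\ell_1}$ and of the translated $\SetGDF{\ell_1\beta\base1}{(\ell_1+\ell_2)\beta\base1}{\ell_2}$, they may overlap and grab the same heavy atoms, so $\Mass{\xi_1\cup\xi_2}$ can be strictly smaller than $\Mass{\xi_1}+\Mass{\xi_2}$ and Kingman's hypothesis fails; if instead, as you write, you ``choose the optimizers within disjoint slabs'', you have silently replaced $\MassGDF{}{}{}$ by a slab-restricted quantity (essentially the diamond process $\MassGDC{1}{\cdot}{\cdot}{\cdot}$), for which Kingman does apply but which then still has to be compared back to the unrestricted process. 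That comparison is precisely the content of Proposition~\ref{prop : Preli/LLN} (Lemmas~2.2--2.4 of \cite{Article3}) and of the cylinder-decomposition machinery used in Section~\ref{subsec : UpperTail/Limit} of this paper; it is not a one-line remark, and your sketch never supplies it. Two smaller points of the same nature: continuity of $\LimMassG$ at $\beta=1$ does not follow from the ``trivial sandwich'' (a decreasing concave function on $\intervalleff01$ can jump down at the right endpoint), and the almost-sure \emph{upper} bound for $\MassGUF\ell/\ell$ cannot be obtained by a bare union over a finite cover of the sphere of the directed a.s.\ limits --- one needs quantitative deviation or continuity-in-direction estimates to control all directions simultaneously, which again is part of what \cite{Article3} actually carries out.
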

Along the proof of Theorem~\ref{thm : Intro/LLN}, an analogue for an auxiliary process called the diamond process was shown; this analogue will also play a central role here (see Section~\ref{subsec : Preli/AuxProcesses}). 

\subsection{Main results}
\label{sec : Intro/Main}

Our first result, Theorem~\ref{thm : MAIN/LowerTail}, gives a bound for the lower-tail deviation probability.
\begin{Theorem}
	\label{thm : MAIN/LowerTail}
	Assume that $\nu$ satisfies~\eqref{ass : Greedy}. For all $\beta \in \intervalleoo01$, $\zeta < \LimMassG(\beta)$,
	\begin{equation}
		\label{eqn : MAIN/LowerTail/Directed}
		\liminf_{\ell \to\infty} - \frac{1}{\ell^d} \log \Pb{ \MassGDF{0}{\ell \beta \base 1}\ell \le \zeta \ell } > 0
	\end{equation}
	and for all $\zeta < \LimMassG(0)$,
	\begin{equation}
		\label{eqn : MAIN/LowerTail/Undirected}
		\liminf_{\ell \to\infty} - \frac{1}{\ell^d} \log \Pb{ \MassGUF\ell \le \zeta \ell } > 0.
	\end{equation}
\end{Theorem}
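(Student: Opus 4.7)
The rate $\exp(-c\ell^d)$ matches the Poisson concentration over a region of volume $\ell^d$, suggesting a strategy based on $\Theta(\ell^{d-1})$ mutually independent candidate witnesses, each failing with probability at most $\exp(-c\ell)$. By the sandwich~\eqref{eqn : Intro/Formal/BorneGrossiere}, it is enough to produce a path witness lower-bounding $\MassPDF{0}{\ell\beta\base 1}\ell$, which automatically lifts to the animal variants $\MassADF{0}{\ell\beta\base 1}\ell$ and $\MassADFpen{0}{\ell\beta\base 1}\ell q$. I focus on the directed case~\eqref{eqn : MAIN/LowerTail/Directed}; the undirected case~\eqref{eqn : MAIN/LowerTail/Undirected} is analogous, replacing the parallel tubes below by $\Theta(\ell^{d-1})$ cones emanating from the origin (pairwise disjoint outside a small neighbourhood of $0$).

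Fix $\zeta < \LimMassG(\beta)$. By continuity of $\LimMassG$, pick $\delta > 0$ and $\eta > 0$ small enough that $(1-\eta)\LimMassG(\beta(1+\delta)) > \zeta(1+\delta)$, and let $L$ be a large integer. In the transverse ball $\ball{0,\,\delta\ell/(2(1+\delta))} \subset \R^{d-1}$, place a $2L$-spaced mesh of $N = \Theta(\ell^{d-1})$ centers $y_1, \ldots, y_N$ and form the pairwise disjoint tubes $T_j \dpe [0,\ell\beta] \times \clball{y_j, L} \subset \R^d$. For each $j$, a candidate path consists of the segment $0 \to (0, y_j)$, a within-tube path from $(0, y_j)$ to $(\ell\beta, y_j)$ of length at most $(1+\epsilon)\ell\beta$ with $\epsilon \dpe 1/(\beta(1+\delta)) - 1$, and the segment $(\ell\beta, y_j) \to \ell\beta\base 1$; the total length is $2|y_j| + (1+\epsilon)\ell\beta \le \delta\ell/(1+\delta) + \ell/(1+\delta) = \ell$. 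The within-tube path is built block by block: subdivide $T_j$ along its axis into $M = \ell\beta/L$ blocks $B_k$ of shape $L \times \clball{y_j, L}$. In each $B_k$, the anchored (``diamond'') version of Theorem~\ref{thm : Intro/LLN} proved in~\cite{Article3} provides, with probability $p_L \to 1$ as $L \to \infty$, a sub-path between the two transverse $L$-faces of length at most $(1+\epsilon)L$ and mass at least $(1-\eta/2)(1+\epsilon)L\,\LimMassG(\beta(1+\delta))$. Since the blocks are disjoint, these events are i.i.d.\ Bernoulli$(p_L)$, and a Chernoff estimate yields a within-tube mass of at least $\zeta\ell$ with probability $\ge 1 - \exp(-c_L \ell)$.

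The tubes $T_j$ being pairwise disjoint, the $N$ events ``tube $T_j$ succeeds'' depend on disjoint parts of the Poisson process, hence are mutually independent. Consequently
\begin{equation*}
    \Pb{\MassGDF{0}{\ell\beta\base 1}\ell \le \zeta\ell} \le \prod_{j=1}^{N} \Pb{T_j \text{ fails}} \le \exp(-c_L \ell)^N = \exp(-c\, \ell^d),
\end{equation*}
which establishes~\eqref{eqn : MAIN/LowerTail/Directed}. The main technical difficulty is the block-level step: the sub-paths must be anchored near prescribed entrance and exit points on the block's transverse $L$-faces so as to chain into an admissible within-tube path, and the local LLN must produce a rate arbitrarily close to $\LimMassG(\beta(1+\delta))$ with uniform positive probability. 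This is exactly the role of the diamond-process formalism from~\cite{Article3}; once this anchored input is in hand, the Chernoff-plus-independence conclusion is routine.
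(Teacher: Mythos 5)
Your construction for the directed case is, at its core, the paper's own: $\Theta(\ell^{d-1})$ pairwise disjoint corridors of width $O(L)$, each crossed by a chain of $\Theta(\ell/L)$ independent diamond-type blocks whose behaviour is governed by the anchored law of large numbers (Proposition~\ref{prop : Preli/LLN}), a per-corridor Chernoff bound, and independence across corridors; Lemma~\ref{lem : LowerTail/WinCon} implements exactly this with diamonds $\MassGDC{1/2}{\cdot}{\cdot}{L}$ chained along the lines $2Lk+\R \beta\base 1$ and with Chernoff applied directly to the i.i.d.\ diamond masses (no Bernoulli ``good block'' bookkeeping is needed, since Proposition~\ref{prop : Preli/LLN} gives $\rL^1$ convergence and the variables are nonnegative). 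The genuine gap is your opening reduction. You claim that, by~\eqref{eqn : Intro/Formal/BorneGrossiere}, it suffices to exhibit path witnesses for $\MassPDF{0}{\ell\beta\base 1}{\ell}$ and that the result ``lifts'' to the animal variants. The inequality $\MassPDF{0}{\ell\beta\base 1}{\ell}\le\MassADFpen{0}{\ell\beta\base 1}{\ell}{q}\le\MassADF{0}{\ell\beta\base 1}{\ell}$ does transfer lower bounds on the mass, but the statement for $\rG=\rA^{(q)}$ concerns every $\zeta<\LimMassA^{(q)}(\beta)$, and in general $\LimMassA^{(q)}(\beta)$ is strictly larger than $\LimMassP(\beta)$ (animals may branch and harvest strictly more mass per unit length than paths; the factor $2$ in~\eqref{eqn : Intro/Formal/BorneGrossiere} is the only general relation). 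For $\zeta\in\intervalleoo{\LimMassP(\beta)}{\LimMassA^{(q)}(\beta)}$ your within-tube \emph{path} witness reaches mass $\zeta\ell$ with probability tending to $0$, not $1-\mathrm e^{-c\ell}$, so the product over tubes becomes vacuous and the animal cases of~\eqref{eqn : MAIN/LowerTail/Directed} are not proved.

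The fix is immediate and is what the paper does: run the very same tube/block construction with within-block witnesses taken from the class $\cG$ under consideration, i.e.\ use the diamond process $\MassGDC{1/2}{\cdot}{\cdot}{L}$ for the relevant $\rG$ (Proposition~\ref{prop : Preli/LLN} is stated for paths and penalized animals alike) and chain the blocks via Lemma~\ref{lem : Concatenation}, which is formulated for the penalized masses and guarantees both admissibility of the concatenation and independence of the block contributions. Two smaller remarks: your undirected argument via $\Theta(\ell^{d-1})$ cones emanating from the origin is both unnecessary and delicate as stated (cones with disjoint interiors from a single apex must have angular width $O(1/\ell)$, hence transverse width $O(1)$ at distance $\Theta(\ell)$, too narrow to contain width-$L$ blocks; one would have to let them overlap in a macroscopic ball around $0$ and use only the disjoint far parts). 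The paper instead deduces~\eqref{eqn : MAIN/LowerTail/Undirected} from~\eqref{eqn : MAIN/LowerTail/Directed} by the trivial inclusion $\acc{\MassGUF\ell\le\zeta\ell}\subseteq\acc{\MassGDF{0}{\ell\beta\base 1}{\ell}\le\zeta\ell}$ for a small $\beta$ chosen by continuity of $\LimMassG$ at $0$.
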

Under a stronger assumption, Theorem~\ref{thm : MAIN/UpperTail} gives the order and the existence of the rate function for the upper-tail deviation probability.
\begin{Theorem}
	\label{thm : MAIN/UpperTail}
	Assume that there exists $\lambda >0$ such that
	\begin{equation}
		\label{ass : ExpMoment} \tag{Moment}
		\int_{\intervalleoo0\infty} \p{ \mathrm e^{\lambda t} -1 }\nu(\d t)<\infty.
	\end{equation}
	Then for all $(\beta,\zeta) \in \intervallefo01 \times \intervalleoo0\infty$, the limit
	\begin{equation}
		\label{eqn : MAIN/UpperTail/Directed}
		\FdT_\rG(\beta,\zeta)  \dpe \lim_{n \to \infty} -\frac1n \log \Pb{ \MassGDF{0}{\ell \beta \base 1}\ell \ge \zeta \ell }
	\end{equation}
	exists and it is finite. Moreover:
	\begin{enumerate}[(i)]
		\item \label{item : MAIN/UpperTail/Undirected} For all $\zeta \in \intervalleoo0\infty$,
			\begin{equation}
				\label{eqn : MAIN/UpperTail/Undirected}
				\lim_{n\to \infty} -\frac1n \log \Pb{\MassGUF\ell \ge \zeta \ell } = \FdT_\rG(0,\zeta).
			\end{equation}
		\item \label{item : MAIN/UpperTail/ConvexNondec} The function $\FdT_\rG$ is continuous, convex and nondecreasing on $\intervallefo01 \times \intervalleoo0\infty$.
		\item \label{item : MAIN/UpperTail/OdG} For all $(\beta,\zeta) \in \intervallefo01 \times \intervalleoo0\infty$, \[ \FdT_\rG(\beta,\zeta) =0 \text{ if and only if } \zeta \le \LimMassG(\beta).\]
		\item \label{item : MAIN/UpperTail/Limit} For all $\beta \in\intervallefo01$, $\FdT_\rG(\beta, \cdot)$ is strictly increasing on $\intervallefo{\LimMassG(\beta)}{\infty}$, and
			\begin{equation}
				\lim_{\zeta \to \infty} \FdT_\rG(\beta, \zeta) = \infty.
			\end{equation}
	\end{enumerate}
\end{Theorem}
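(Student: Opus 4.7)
The plan is to establish $\FdT_\rG(\beta,\zeta)$ via a concatenation/subadditivity argument carried out inside the diamond auxiliary process, and then to derive properties~(\ref{item : MAIN/UpperTail/Undirected})--(\ref{item : MAIN/UpperTail/Limit}) from convex analysis combined with Theorem~\ref{thm : Intro/LLN} and the upper-tail concentration inequality alluded to in the abstract.

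\textbf{Existence and finiteness.} Fix $(\beta,\zeta)$ and set $a_\ell\dpe -\log\Pb{\MassGDF{0}{\ell\beta\base1}{\ell}\ge \zeta\ell}$. The key step is approximate subadditivity, obtained by concatenating a witness of length $\le\ell_1$ from $0$ to $\ell_1\beta\base 1$ with mass $\ge\zeta\ell_1$ with a translated, independent witness from $\ell_1\beta\base 1$ to $(\ell_1+\ell_2)\beta\base 1$ of length $\le\ell_2$ and mass $\ge\zeta\ell_2$. Independence requires each witness to depend on a disjoint piece of $\pro$, so one restricts to the diamond around the anchor segment as in Section~\ref{subsec : Preli/AuxProcesses}. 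The diamond-process results ensure that this restriction does not change the exponential rate, so Fekete's lemma yields $\FdT_\rG(\beta,\zeta)=\lim a_\ell/\ell = \inf a_\ell/\ell$. Finiteness follows from the explicit lower bound $\Pb{\MassGDF{0}{\ell\beta\base 1}{\ell}\ge\zeta\ell}\ge e^{-C\ell}$ obtained under~\eqref{ass : ExpMoment} by forcing a single heavy atom of mass $\ge\zeta\ell$ to land in a ball on the anchor segment.

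\textbf{Properties.} For~(\ref{item : MAIN/UpperTail/Undirected}), the inequality $\MassGDF{0}{\ell\beta\base 1}{\ell}\le\MassGUF\ell$ gives $\FdT_\rG(\beta,\zeta)\ge \lim_{\ell\to\infty}-\frac1\ell\log\Pb{\MassGUF\ell\ge\zeta\ell}$ for every $\beta$, while the reverse inequality at $\beta=0$ comes from anchoring an unanchored witness through an $o(\ell)$ detour. Convexity in~(\ref{item : MAIN/UpperTail/ConvexNondec}) is a joint-parameter version of the above concatenation: witnesses at $(\beta_1,\zeta_1),(\beta_2,\zeta_2)$ on lengths $\lambda\ell$ and $(1-\lambda)\ell$ concatenate to a witness at $(\lambda\beta_1+(1-\lambda)\beta_2,\lambda\zeta_1+(1-\lambda)\zeta_2)$ on length $\ell$. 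Continuity follows from convexity on the open domain; monotonicity in $\zeta$ is trivial; monotonicity in $\beta$ is deduced from the $\beta=0$ inequality above combined with convexity along the segment from $(0,\zeta)$ to $(\beta,\zeta)$. For~(\ref{item : MAIN/UpperTail/OdG}), the direction $\zeta\le\LimMassG(\beta)\Rightarrow\FdT_\rG=0$ is Theorem~\ref{thm : Intro/LLN} extended to the boundary by continuity; the converse is the upper-tail concentration inequality, proved first for bounded marks and then extended to~\eqref{ass : ExpMoment} by truncation. Property~(\ref{item : MAIN/UpperTail/Limit}) combines convexity (giving strict monotonicity once $\FdT_\rG(\beta,\cdot)>0$) with the Chernoff bound $\Pb{\MassGDF{0}{\ell\beta\base 1}{\ell}\ge\zeta\ell}\le e^{-\lambda\zeta\ell}\E{e^{\lambda\MassGDF{0}{\ell\beta\base 1}{\ell}}}$, whose right-hand side is $e^{O(\ell)}$ by~\eqref{ass : ExpMoment} and~\eqref{eqn : Intro/LLN_Animal/GeneralUB}, so the rate grows linearly in $\zeta$.

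\textbf{Main obstacle.} The crux is the concatenation step: in the continuous model a greedy path/animal may wander arbitrarily far from the anchor segment, so independence of concatenated witnesses is not automatic. Confining each witness to a disjoint diamond while preserving the exponential rate is the nontrivial ingredient and relies entirely on the diamond-process machinery developed in the author's earlier work. A secondary difficulty is upgrading the bounded-marks concentration estimate to the~\eqref{ass : ExpMoment} setting via truncation, as this is needed to obtain $\FdT_\rG>0$ above $\LimMassG(\beta)$.
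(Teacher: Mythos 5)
Your overall architecture is the same as the paper's (subadditivity for diamond processes plus Fekete, transfer to the unconstrained quantities, concentration plus truncation for positivity, convexity for items (iii)--(iv)), but two of your steps would fail as written. First, finiteness of $\FdT_\rG$: you propose the lower bound $\Pb{\MassGDF{0}{\ell\beta\base 1}{\ell}\ge\zeta\ell}\ge e^{-C\ell}$ by forcing a single atom of mass $\ge\zeta\ell$. Under~\eqref{ass : ExpMoment} the measure $\nu$ may be compactly supported, or have tails much lighter than exponential (e.g.\ $e^{-t^2}$), so that event has probability zero or $e^{-\omega(\ell)}$ and gives nothing. The paper only needs $\sup_{1\le\ell\le 2}-\log\Pb{\MassGDC{\delta}{0}{\ell\beta\base 1}{\ell}\ge\zeta\ell}<\infty$ (a fixed-scale event of positive probability, realized by many atoms of bounded mass near the segment), which combined with Fekete's inequality $\lim=\inf$ yields finiteness; alternatively, a correct linear-order construction asks for order $\ell$ unit balls along the anchor segment, each containing a bounded number of atoms of mass at least some $t_0$ with $\nu(\intervallefo{t_0}{\infty})>0$, giving probability $p_0^{C\ell}$.

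Second, and more importantly, you defer the identity ``diamond rate $=$ unconstrained rate'' to ``the diamond-process machinery developed in the author's earlier work''. That machinery (Proposition~\ref{prop : Preli/LLN}) only gives the law of large numbers; it says nothing about upper-tail rates, and this transfer is precisely the content of Proposition~\ref{prop : UpperTail/Limit}, proved in this paper: one decomposes an arbitrary path into at most $\frac1\delta+2$ cylinder paths (Lemma~\ref{lem : UpperTail/Limit/Paths/Decomposition}), discretizes their endpoints (Lemma~\ref{lem : UpperTail/Limit/Bound}), controls the resulting skeletons via a BK-type inequality for Poisson processes (Lemma~\ref{lem : Decomposition}), and recombines the exponents using convexity and monotonicity of $\FdT$. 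Relatedly, your argument for item (i), ``anchoring an unanchored witness through an $o(\ell)$ detour'', fails for paths: a free path of length $\ell$ may end at distance of order $\ell$ from the origin, so anchoring it costs order $\ell$ extra length and degrades the rate by a constant factor; the paper instead runs the same skeleton/BK argument for $\MassGUF\ell$. A minor point: in (iv) your Chernoff step needs $\E{e^{\lambda\MassGDF{0}{\ell\beta\base1}{\ell}}}=e^{O(\ell)}$, which does not follow from~\eqref{ass : ExpMoment} and~\eqref{eqn : Intro/LLN_Animal/GeneralUB} (the latter bounds only the mean, and the animal may explore volume of order $\ell^d$); this step is in any case unnecessary, since convexity together with (iii) already gives strict monotonicity and divergence, as in the paper.
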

By classic large deviation arguments detailed in Appendix~\ref{appsec : LDP}, Theorems~\ref{thm : MAIN/LowerTail} and~\ref{thm : MAIN/UpperTail} imply Corollary~\ref{cor : LDP}.
\begin{Corollary}
 	\label{cor : LDP}
 	Assume that~\eqref{ass : ExpMoment} holds. For all $(\beta,\zeta) \in \intervallefo01 \times \intervallefo0\infty$, define
 	\begin{equation}
 		\FdT_\rG^*(\beta, \zeta) \dpe \begin{cases}
 			\FdT_\rG(\beta, \zeta) &\text{ if } \zeta \ge \LimMassG(\beta),\\ \infty &\text{ if } \zeta < \LimMassG(\beta).
 		\end{cases}
 	\end{equation}
 	Let $\beta \in \intervallefo01$. Then $\p{ \frac1\ell\MassGDF{0}{\ell \beta \base 1}{\ell} }_{\ell >0}$ follows the large deviation principle (LDP) at speed $\ell$, with the rate function $\FdT_\rG^*(\beta, \cdot)$. In other words, for every Borel subset $B \subseteq \intervallefo0\infty$, denoting by $\overline B$ its closure and by $\mathring B$ its interior, we have
 	\begin{equation}
 		\begin{split}
 			\inf_{\zeta \in \overline B} \FdT_\rG^*(\beta, \zeta)%
 				&\le \liminf_{\ell \to \infty}-\frac1\ell \log \Pb{ \frac1\ell \MassGDF{0}{\ell \beta \base 1}{\ell} \in B }\\
 				&\le \limsup_{\ell \to \infty}-\frac1\ell \log \Pb{ \frac1\ell \MassGDF{0}{\ell \beta \base 1}{\ell} \in B }%
 				\le \inf_{\zeta \in \mathring B} \FdT_\rG^*(\beta, \zeta).
 		\end{split}
 	\end{equation}
 	Moreover, $\p{ \frac1\ell \MassGUF{\ell} }_{\ell >0}$ follows the large deviation principle at speed $\ell$, with the rate function $\FdT_\rG^*(0, \cdot)$.
\end{Corollary}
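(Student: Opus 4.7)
The plan is to deduce the LDP from Theorems~\ref{thm : MAIN/LowerTail} and~\ref{thm : MAIN/UpperTail} by the classical route: verify that $\FdT_\rG^*(\beta,\cdot)$ is a bona fide (lower semicontinuous) rate function, then establish the LDP upper bound on closed sets and the LDP lower bound on open sets separately. I focus on the directed case, writing $X_\ell \dpe \frac{1}{\ell}\MassGDF{0}{\ell \beta \base 1}{\ell}$ and $I \dpe \FdT_\rG^*(\beta,\cdot)$; the undirected case follows by the same argument after replacing every appeal to Theorem~\ref{thm : MAIN/UpperTail} (resp.\ Theorem~\ref{thm : MAIN/LowerTail}) by its undirected counterpart in part (i) of Theorem~\ref{thm : MAIN/UpperTail} (resp.\ by~\eqref{eqn : MAIN/LowerTail/Undirected}). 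Lower semicontinuity of $I$ is immediate from part (ii) of Theorem~\ref{thm : MAIN/UpperTail} (continuity of $\FdT_\rG$) together with $I = \infty$ on $\intervallefo{0}{\LimMassG(\beta)}$ and $I(\LimMassG(\beta)) = 0$.

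For the upper bound on a closed $F \subseteq \intervallefo{0}{\infty}$, I would split $F = F_1 \cup F_2$ with $F_1 = F \cap \intervallefo{0}{\LimMassG(\beta)}$ and $F_2 = F \cap \intervallefo{\LimMassG(\beta)}{\infty}$ and apply the union bound. If $F_2 \ne \emptyset$, it admits a smallest element $\zeta^* = \min F_2$ (being closed in $\R$ and bounded below), and $\Pb{X_\ell \in F_2} \le \Pb{X_\ell \ge \zeta^*}$; Theorem~\ref{thm : MAIN/UpperTail} then yields an asymptotic decay rate of $\FdT_\rG(\beta,\zeta^*)$, which equals $\inf_{F_2} I$ by the monotonicity in part (iv) of the same theorem. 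For $F_1$, either $\sup F_1 < \LimMassG(\beta)$, in which case Theorem~\ref{thm : MAIN/LowerTail} yields a super-exponential (in $\ell$) decay that makes this contribution negligible, or $\sup F_1 = \LimMassG(\beta)$; in the latter case $\LimMassG(\beta) \in F_2$ since $F$ is closed, hence $\inf_F I = 0$ and the upper bound is trivial.

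For the lower bound on an open $U$, it suffices to check that for each $\zeta_0 \in U$, $\liminf_{\ell \to \infty} \ell^{-1}\log \Pb{X_\ell \in U} \ge -I(\zeta_0)$. If $\zeta_0 < \LimMassG(\beta)$ the bound is trivial; if $\zeta_0 = \LimMassG(\beta)$, the LLN (Theorem~\ref{thm : Intro/LLN}) gives $\Pb{X_\ell \in U} \to 1$ and the inequality follows. If $\zeta_0 > \LimMassG(\beta)$, pick $\eta > 0$ small enough that $(\zeta_0 - \eta, \zeta_0 + \eta) \subseteq U$ and $\zeta_0 - \eta > \LimMassG(\beta)$, and write
\begin{equation*}
	\Pb{X_\ell \in (\zeta_0 - \eta, \zeta_0 + \eta)} = \Pb{X_\ell \ge \zeta_0 - \eta} - \Pb{X_\ell \ge \zeta_0 + \eta}.
\end{equation*}
By~\eqref{eqn : MAIN/UpperTail/Directed} the first probability behaves as $\exp(-\ell(\FdT_\rG(\beta,\zeta_0 - \eta) + o(1)))$ and the second as $\exp(-\ell(\FdT_\rG(\beta,\zeta_0 + \eta) + o(1)))$, with $\FdT_\rG(\beta,\zeta_0 + \eta) > \FdT_\rG(\beta,\zeta_0 - \eta)$ by part (iv) of Theorem~\ref{thm : MAIN/UpperTail}; thus the second term is exponentially smaller than the first, whence $\liminf_{\ell \to \infty} \ell^{-1}\log \Pb{X_\ell \in (\zeta_0 - \eta, \zeta_0 + \eta)} \ge -\FdT_\rG(\beta,\zeta_0 - \eta)$. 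Sending $\eta \to 0$ and using continuity of $\FdT_\rG(\beta,\cdot)$ at $\zeta_0$ then concludes.

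The only delicate step is the thin-slab lower bound just described: converting the upper-tail estimates of Theorem~\ref{thm : MAIN/UpperTail} into a lower bound on $\Pb{X_\ell \in (\zeta_0 - \eta, \zeta_0 + \eta)}$ crucially relies on the strict monotonicity of $\FdT_\rG(\beta,\cdot)$ past $\LimMassG(\beta)$ provided by part (iv); without it, one would obtain only a weak LDP.
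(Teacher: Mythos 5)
Your proof is correct, and it takes a somewhat different route from the paper. The paper first establishes the \emph{weak} LDP by computing, for each $\zeta$, the asymptotic rates of $\Pb{\frac1\ell \MassGDF{0}{\ell\beta\base1}{\ell} \in \intervalleff{\zeta-\eps}{\zeta+\eps}}$ (invoking Theorem~4.1.11 of \cite{LDTA}), then upgrades it to the full LDP via exponential tightness (itself a consequence of Theorem~\ref{thm : MAIN/UpperTail}\eqref{item : MAIN/UpperTail/Limit}) and Lemma~1.2.18 of \cite{LDTA}. You instead prove the closed-set upper bound and the open-set lower bound directly: for a closed $F$ you exploit the monotonicity of $\FdT_\rG(\beta,\cdot)$ to reduce the whole of $F\cap\intervallefo{\LimMassG(\beta)}{\infty}$ to the single event $\acc{X_\ell \ge \zeta^*}$ with $\zeta^*=\min F_2$, which is what replaces exponential tightness and lets you handle unbounded closed sets in one stroke; your treatment of $F\cap\intervallefo0{\LimMassG(\beta)}$ via the speed-$\ell^d$ lower-tail bound, your use of the LLN at $\zeta=\LimMassG(\beta)$, and your thin-slab lower bound (relying on the strict monotonicity from Theorem~\ref{thm : MAIN/UpperTail}\eqref{item : MAIN/UpperTail/Limit} to make $\Pb{X_\ell\ge\zeta_0+\eta}$ exponentially negligible compared with $\Pb{X_\ell\ge\zeta_0-\eta}$) are exactly the three case analyses the paper performs, just packaged without the abstract weak-LDP machinery. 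Your version is more self-contained and arguably more elementary; the paper's version outsources the topological bookkeeping to standard references.

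One small slip to fix: the identity you write,
\begin{equation*}
	\Pb{X_\ell \in (\zeta_0-\eta,\zeta_0+\eta)} = \Pb{X_\ell \ge \zeta_0-\eta} - \Pb{X_\ell \ge \zeta_0+\eta},
\end{equation*}
is not an equality (the right-hand side is $\Pb{\zeta_0-\eta \le X_\ell < \zeta_0+\eta}$, which can exceed the left-hand side by the atom at $\zeta_0-\eta$), so as written it bounds the open-interval probability from the wrong side. The remedy is immediate: bound $\Pb{X_\ell \in (\zeta_0-\eta,\zeta_0+\eta)}$ from below by $\Pb{X_\ell \ge \zeta_0-\eta/2} - \Pb{X_\ell \ge \zeta_0+\eta/2}$ and run the same comparison of exponents; the conclusion and the final limit $\eta\to0$ are unaffected.
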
 

Along the proof of Theorem~\ref{thm : MAIN/UpperTail}\eqref{item : MAIN/UpperTail/OdG}, we will actually prove some explicit lower bounds for $\FdT_\rG$, under some additional assumptions on $\nu$.
\begin{Theorem}
	\label{thm : BOUNDS}
	Assume that $\nu$ is supported by $\intervalleof0m$, with $0<m<\infty$, and $\mathrm G = \mathrm P$ or $\mathrm A$. Then for all $\beta \in \intervallefo01$ and $\zeta > \LimMassG(\beta)$,
	\begin{equation}
		\label{eqn : BOUNDS/Unpen}
		\FdT_\rG(\beta,\zeta) \ge h\p{ \frac{\zeta - \LimMassG(\beta)}{ \LimMassG(\beta) }  } \frac{\LimMassG(\beta)}{m},
	\end{equation}
	where $h(s) = (1+s)\log(1+s) - s$. Further assume that $\nu$ is finite. Let $q\in \intervallefo0\infty$. Then for all $\beta \in \intervallefo01$ and $\zeta > \LimMassG(\beta)$,
	\begin{equation}
		\label{eqn : BOUNDS/Pen}
		\FdT_{\rA^{(q)}}(\beta,\zeta) \ge \max_{\alpha > 0} \min \p{ h(\alpha) \Cr{DIRAC} \nu\p{\intervalleof0m}^{1/d} , \frac{ \p{ \zeta - \LimMassA^{(q)}(\beta)}^2 }{(1+\alpha)  \Cr{DIRAC} \nu\p{\intervalleof0m}^{1/d} (q+m)^2 } },
	\end{equation}
	where $C_2>0$ only depends on the dimension.
\end{Theorem}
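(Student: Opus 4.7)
The plan is to prove the two inequalities separately, both via concentration of measure for Poisson functionals, combined with the mean estimates of Theorem~\ref{thm : Intro/LLN}.

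For~\eqref{eqn : BOUNDS/Unpen}, I view $F := \MassGUF{\ell}$ (or its directed variant $\MassGDF{0}{\ell\beta\base 1}{\ell}$) as a functional of $\pro$ and aim for a Chernoff bound via a Wu-type Poisson concentration. Two ingredients are needed. First, the discrete gradient $D_{(x,t)} F := F(\pro + \Dirac{(x,t)}) - F(\pro)$ lies in $[0, t] \subseteq [0, m]$: adding a new Poisson point of mass $t \le m$ can raise the greedy mass by at most $t$ and never decreases it. Second, by the Mecke formula combined with the observation that removing a Poisson point from the greedy animal decreases $F$ by at most that point's own mass, one has $\int \E{D_{(x,t)} F}\, d(\Leb \otimes \nu)(x,t) \le \E{F}$. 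Plugging these into the standard Wu-type log-Laplace inequality $\log \E{e^{\lambda F}} \le \lambda \E{F} + \int \E{\phi(\lambda D_{(x,t)} F)}\, d\mu$ (with $\phi(u) = e^u - 1 - u$) and using $\phi(\lambda u) \le u\, \phi(\lambda m)/m$ valid for $u \in [0, m]$, everything collapses to $\log \E{e^{\lambda F}} \le \E{F}\, (e^{\lambda m} - 1)/m$. A Chernoff bound optimised at $\lambda m = \log(\zeta/\LimMassG(\beta))$, combined with $\E{F}/\ell \to \LimMassG(\beta)$ from Theorem~\ref{thm : Intro/LLN}, then yields exactly the Poisson rate function $h((\zeta - \LimMassG(\beta))/\LimMassG(\beta))\, \LimMassG(\beta)/m$.

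For~\eqref{eqn : BOUNDS/Pen}, I use a two-step argument that mirrors the $\max_\alpha \min(\cdot, \cdot)$ structure. Let $K_\ell := \sup_{\xi \in \SetAUFalt\ell} \#(V(\xi) \cap \projpro)$. Since $\nu$ is finite with total mass $\rho := \nu((0, m])$, the support $\projpro$ is homogeneous Poisson of intensity $\rho$; by scaling to unit intensity and applying the previous part to the counting greedy problem (all masses equal to $1$), together with $\mathbf{K}(\beta) \le C_2\, \rho^{1/d}$ where $C_2$ is the undirected unit-intensity LLN constant (dimension-only), one obtains $\Pb{K_\ell \ge (1 + \alpha)\, C_2\, \rho^{1/d}\, \ell} \le \exp(-h(\alpha)\, C_2\, \rho^{1/d}\, \ell + o(\ell))$. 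On the complementary event $\{K_\ell \le N\}$ with $N := (1 + \alpha)\, C_2\, \rho^{1/d}\, \ell$, the penalised mass $\MassADFpen{0}{\ell\beta\base 1}{\ell}{q}$ is a maximum over animals of sums of at most $N$ ``per-vertex contributions'' lying in $[-q, m]$; a Hoeffding-type concentration (via Talagrand's convex-distance inequality or a conditional McDiarmid on the effective support) delivers $\Pbcond{\MassADFpen{0}{\ell\beta\base 1}{\ell}{q} \ge \zeta \ell}{K_\ell \le N} \le \exp\!\p{-(\zeta - \LimMassA^{(q)}(\beta))^2 \ell / [(1 + \alpha)\, C_2\, \rho^{1/d}\, (q + m)^2] + o(\ell)}$. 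Combining the two estimates via a union bound and optimising over $\alpha > 0$ yields~\eqref{eqn : BOUNDS/Pen}.

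The main obstacle is this Hoeffding step: showing that the concentration depends on the effective sample size $N \sim \ell$ rather than on the total number of Poisson points $\#(\projpro \cap \ball{0, \ell}) \sim \ell^d$ inside the enclosing ball. A naive McDiarmid over all of $\projpro \cap \ball{0, \ell}$ is far too weak. The clean route is via Talagrand's convex-distance concentration: conditionally on $\projpro$, the penalised mass is a maximum of linear functionals of $(\Mass{v})_{v \in \projpro}$ and hence convex, with $\ell^2$-Lipschitz constant $\sqrt{K_\ell}\, m$ that reduces to $\sqrt{N}\, m$ on the conditioning event. Producing the $(q + m)^2$ factor in place of $m^2$ requires centring the deterministic penalty $-q\, \#(V \setminus \projpro)$ into the deviation term so that the effective per-vertex range becomes $q + m$; the bookkeeping for animals that legitimately include non-Poisson vertices is the delicate point.
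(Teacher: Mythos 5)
Your overall strategy is the paper's: a Bennett-type bound for the bounded-mark greedy mass coming from a self-bounding property, a truncation of the number of collectible points followed by a Gaussian-type bound for the penalized mass, and then passage to the limit via Theorem~\ref{thm : Intro/LLN}; your Chernoff optimisation and the scaling identification of $C_2$ are fine. However, both concentration inputs are left with genuine gaps. For~\eqref{eqn : BOUNDS/Unpen}, the inequality you call ``the standard Wu-type log-Laplace inequality'', namely $\log\E{e^{\lambda F}}\le\lambda\E{F}+\int\E{\phi(\lambda D_{(x,t)}F)}\,\d\mu$, is not a standard quotable result: Wu's modified log-Sobolev inequality and its usual consequences (Houdré--Privault, Bachmann--Peccati) require an \emph{almost sure} bound on $\int\phi(\lambda D_xF)\,\mu(\d x)$, and here that integral is of order $\ell^d$ on configurations where the optimizer has slack (adding a point almost anywhere in the diamond then raises $F$ by its mass), so those results give nothing at speed $\ell$. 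Your detour through the Mecke formula converts the pointwise self-bounding property (remove-one differences integrated against $\eta$, which is what you actually verify) into a statement in expectation, and an in-expectation self-bounding hypothesis is not known to close the entropy method. What is needed is the self-bounding theorem transferred to Poisson space, which is exactly what the paper proves: Proposition~\ref{prop : Concentration/General/SB} via the discretization Lemma~\ref{lem : Concentration/General/Discretization} and Theorem~6.12 of \cite{BLM}, then Proposition~\ref{prop : Concentration/Unpen}. So in part one you have assumed the main technical lemma rather than proved it, although the underlying observation (decrements bounded by the removed mass and summing to at most $F$) is the same as the paper's.

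For~\eqref{eqn : BOUNDS/Pen}, your first exponential term coincides with the paper's bound on $\MassN\ell$, but the Hoeffding step is where the real gap lies, as you yourself suspected. You cannot apply Talagrand or McDiarmid ``conditionally on $\{K_\ell\le N\}$'': that event is a function of the very configuration you are concentrating over, so the product-measure hypotheses fail. Moreover, even conditioning on the locations $\projpro$ and concentrating the marks leaves uncontrolled the fluctuations of the conditional mean, a functional of the locations that carries both the count of collectible points and the penalty bookkeeping; these fluctuations are of order $\ell$, the same order as the deviation being bounded, and this is precisely where the range $q+m$ and the variance proxy $(1+\alpha)\E{\MassN\ell}$ must enter jointly. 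The paper avoids both problems at once by introducing the globally truncated functional $\hatMassAUFpen\ell q$ (maximum over $\xi\in\SetAUFalt\ell$ and $A\subseteq\xi$ with $\#A\le(1+\alpha)\E{\MassN\ell}$), checking that it satisfies the Poisson bounded-differences property with constant $(1+\alpha)\E{\MassN\ell}(q+m)^2$, applying Proposition~\ref{prop : Concentration/General/BoundedDiff}, and using the deterministic inclusion $\{\MassN\ell<(1+\alpha)\E{\MassN\ell}\}\subseteq\{\hatMassAUFpen\ell q=\MassAUFpen\ell q\}$ together with a union bound --- no conditioning is ever performed. To repair your argument, replace the conditional step by such a truncated functional; this essentially brings you back to the paper's proof.
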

\begin{Remark}
	Fix $\beta \in \intervallefo01$. 
	The right-hand side $f(\zeta)$ of~\eqref{eqn : BOUNDS/Unpen} satisfies
		\begin{equation}
			f(\zeta) \underset{\zeta \to \LimMassG(\beta)}\sim \frac{\p{\zeta - \LimMassG(\beta)}^2 }{ 2 \LimMassG(\beta) m} \text{ and } f(\zeta) \underset{\zeta \to \infty}\sim \frac{\zeta \log \zeta}{m},
		\end{equation}
	while the right-hand side $g(\zeta)$ of~\eqref{eqn : BOUNDS/Pen} satisfies
		\begin{equation}
			g(\zeta) \underset{\zeta \to \LimMassG(\beta)}\sim \frac{\p{\zeta - \LimMassG(\beta)}^2 }{ \Cr{DIRAC} \nu\p{\intervalleof0m}^{1/d} (q+m)^2 }  \text{ and } g(\zeta) \underset{\zeta \to \infty}\sim \frac{\zeta (\log \zeta)^{1/2}}{\Cr{DIRAC} \nu\p{\intervalleof0m}^{1/d} (q+m)  }. \end{equation}
	Note that $f(\zeta)$ and $g(\zeta)$ are both of order $\p{\zeta - \LimMassG(\beta)}^2$ for $\zeta$ close to $\LimMassG(\beta)$, and only differ by a $\log(\zeta)^{1/2}$ factor as $\zeta \to \infty$.
\end{Remark}


\subsection{Organization of the paper and outline of the proofs}
\label{subsec : Intro/OotP}

Section~\ref{sec : Preli} introduces the diamond processes which are similar to $\MassGDF\cdot\cdot\cdot$ and already appeared in \cite{Article2}, and two technical lemmas providing inequalities between the maximal mass harvested by animals belonging to certain sets, assuming these sets are linked by concatenation relations.

Section~\ref{sec : LowerTail} contains the proof of Theorem~\ref{thm : MAIN/LowerTail}. The core idea is to show that on the lower-tail deviation event, there exists a set of $\Theta(\ell^{d-1})$ pairwise disjoint corridors, each of which exhibiting lower-tail behaviour.

Section~\ref{sec : Concentration} contains some concentration bounds for $\MassGDF\cdot\cdot\cdot$. We use general concentration results on functions satisfying the self-bounding or the bounded difference property, which are known for independent vector of random variables, and adapted here for Poisson point processes.  

Section~\ref{sec : UpperTail} is devoted to the proof of Theorem~\ref{thm : MAIN/UpperTail}. A subadditivity argument gives the equivalent of~\eqref{eqn : MAIN/UpperTail/Directed} for diamond processes. We then use elementary concatenation arguments to get~\eqref{eqn : MAIN/UpperTail/Directed}. To get the right order for the upper-tail large deviation probability~\eqref{item : MAIN/UpperTail/OdG}, we adapt the strategy of Dembo-Gandolfi-Kesten (see~\cite{Dem01}, Theorem~4.1): we truncate the process $\pro$, then control the bounded part with Theorem~\ref{thm : BOUNDS} (which is essentially an asymptotic version of the bounds from Section~\ref{sec : Concentration}) and the unbounded one by a Chernoff-like bound.

\subsection{Related works and open questions}
\label{subsec : RWOQ}

\paragraph{Last-passage percolation.} Last-passage percolation may be seen as a variant of the greedy model, where paths are required to be oriented. More precisely, in dimension $2$, consider a vector $(X_v)_{v\in \Z^2}$ of i.i.d.\ nonnegative variables, and define \[ L_n \dpe \max_{(0,0) \Path{\gamma} (n,n) } \sum_{v\in \gamma} X_v,  \] where the maximum spans over the set of lattice paths from $(0,0)$ to $(n,n)$ that contain only rightward and upward steps. Provided integrability, a routine subadditive argument shows that $L_n/n$ converges to a deterministic constant $\mathbf L$, and for all $\eps > 0$, the limit \[ \lim_{n\to \infty} -\frac1n \log\Pb{L_n \ge (\mathbf L + \eps) n} \] exists and is positive. Adapting the proof of (5.13) in \cite{KestenStFlour}, one shows that the lower-tail large deviation probability has order $\exp(-n^2)$. In the solvable case where the $(X_v)_{v\in\Z^2}$ follows the exponential distribution with parameter $1$, Logan-Shepp \cite{Log77} and Vershik-Kerov \cite{Ver77} independently proved that $\mathbf L =4$. Baik-Deift-Johansson \cite{Bai99} showed that the scaling limit of $L_n$ is the Tracy-Widom distribution. Johansson \cite{Joh20} gave explicit rate functions for both the upper-tail and lower-tail deviations of $L_n$ in this context. In the Poissonian framework, which is also solvable, similar large deviation results were obtained by Deuschel-Zeitouni \cite{Deu99} and Seppäläinen \cite{Sep98}.

\paragraph{First-passage percolation.} First-passage percolation on the $\Z^d$ lattice consists in endowing each edge with a random positive passage time, such that the family of the passage times are i.i.d., and considering the associated metric $\mathrm T$ on $\Z^d$. It is known that under mild moment conditions, $\mathrm T(0,n\base 1)/n$ converges a.s.\ to a deterministic constant $\mu$, called the \emph{time constant}. Moreover, Kesten \cite[Theorem 5.2]{KestenStFlour} showed that the lower-tail large deviation probability for $\mathrm T(0,n\base 1)$ has order $\exp(-n)$, along with the existence and convexity of the corresponding rate function. Theorem 5.9 in the same paper states that if the edge passage times are bounded a.s., then the upper-tail large deviation probability for $\mathrm T(0,n\base 1)$ has order $\exp(-n^d)$. More than thirty years later, under an additional regularity condition and for $d=2$,\footnote{The proof can be adapted for higher dimensions, though.} Basu, Ganguly and Sly \cite{Bas21} proved the existence of the rate function in this regime, i.e.\ of the limit 
\begin{equation}\label{eqn : Intro/RWOQ/BGS} 
	\lim_{n\to \infty} -\frac1{n^2} \log \Pb{\mathrm T(0 , n\base 1) \le (\mu - \eps)n}.
\end{equation}
The author recently proved \cite{Article4, Article1} two large deviation principles, at speed $n$ and $n^d$ respectively, for the random metric, thus extending the results from \cite{Bas21} and \cite{KestenStFlour}.

\paragraph{Maximal flow.} An edge's weight in first-passage percolation may also be interpreted as the maximal flow of some liquid along this edge. With this point of view, a quantity of interest is the maximal flow $\phi_n$ admissible by the network across the box $\intint0n^d$. The a.s.\ convergence of $\phi_n/ n^{d-1}$ to a deterministic constant was proven by Kesten \cite{Kes87} in dimension $3$ and Zhang \cite{Zha18} in higher dimension. Cerf and Théret \cite{CT11b, CT11c} showed that the lower-tail and upper-tail deviation probabilities of $\phi_n$ have order $\exp\p{-n^{d-1}}$ and $\exp\p{-n^{d}}$ respectively. Dembin and Théret \cite{Dem21+_Surface, Dem21+_Volume} recently proved the existence of both upper-tail and lower-tail rate functions for $\phi_n$. In the former regime, it is a consequence of a large deviation principle at speed $n^d$ that gives estimates for the probability that a given mesoscopic stream --- i.e.\ a vector field that describes one way the liquid could travel through the medium without breaking conservation of mass --- is admissible in $\intint0n^d$.

\paragraph{Lower-tail for greedy animals and paths.} Given Theorem~\ref{thm : MAIN/LowerTail}, it is natural to wonder whether for all $\zeta < \LimMassG(0)$, the limit
\begin{equation}
	\label{eqn : Intro/RWOQ/LDP_LowerTail?}
	\lim_{\ell \to\infty} - \frac{1}{\ell^d} \log \Pb{ \MassGUF\ell \le \zeta \ell }.
\end{equation}
exists. One approach could be to find a way to assemble $k^d$ independent realizations of $\acc{ \MassGUF\ell \le \zeta \ell }$ on size $\ell$ boxes into a realization of a size $k\ell$ version of this event, and use a multiparameter subadditive argument. The main obstacle would be that such a construction is not trivial. In particular, naively sticking boxes next to each other does not produce the desired event. The authors of \cite{Bas21}, \cite{Dem21+_Volume} and \cite{Article1} all encountered this difficulty in different models; they relied in some shape to a description of the large box as a mosaic of nearly uniform tiles, and apply the multiparameter subadditive argument to those tiles. Such an approach --- we would like to call it the \emph{tile method} --- may very well be fruitful for greedy animals and paths but would require an understanding of the natural object that contains all the information about masses gathered by greedy animals and paths, the same way the random metric in first-passage percolation contains all the information about the passage time between any two subsets of $\Z^d$.

\paragraph{Extension of Theorems~\ref{thm : MAIN/LowerTail} and~\ref{thm : MAIN/LowerTail} to other greedy models.} All our proofs rely on independence, some of them for multiple reasons. Notably, the concentration bounds we use fail for dependent fields, as well as the fundamental Lemmas \ref{lem : Concatenation} and \ref{lem : Decomposition}. Rotation-invariance, on the contrary, is not crucial: for example, we believe our main results still hold in the discrete setting. The main changes are the following: \begin{itemize}
\item All occurrences of $\beta \base 1$, with $\beta \in \intervallefo01$ should be replaced by a point $u$ in $\ball[1]{0,1}$, the open unit ball for $\norme[1]\cdot$.
\item The rate function $\FdT$ appearing in Theorem~\ref{thm : MAIN/UpperTail} should be defined on $\ball[1]{0,1} \times \intervalleoo0\infty$. 
\item The constructions of Sections~\ref{prop : UpperTail/Limit} and \ref{subsec : UpperTail/OdG} should be replaced the ones from \cite[Section 2]{Article2}, which are more sophisticated and involve, besides the diamond process, another object called the antidiamond process.
\end{itemize}
We choose not to detail those and stick to the continuous setting for the sake of simplicity.

\paragraph{About the moment assumption.} Assumption~\eqref{ass : ExpMoment} is equivalent to the conjunction of
\begin{align}
 	\label{eqn : ExpMoment 1} \exists \lambda > 0 \text{ s.t.\ } \int_1^\infty \mathrm e^{\lambda t} \nu(\d t) &< \infty, \intertext{ and } \label{eqn : ExpMoment 2} \int_0^1 t\nu(\d t) &< \infty.
\end{align} 

Theorem~\ref{thm : MAIN/UpperTail} fails without \eqref{eqn : ExpMoment 1}. Indeed, let $\nu$ be a probability measure whose exponential moments are all infinite. For all $i \in \N$ let $X_i$ denote the maximal mass of atoms in $\intervalleff01^d + i\base 1$. The exponential moments of $X_0$ are also infinite, therefore by Cramér's theorem (see e.g.\ Theorem 2.2.3 in \cite{LDTA}), for all $\zeta >0$,
\[ \lim_{n\to \infty} -\frac1n \log \Pb{\sum_{i=0}^{n-1} X_i \ge \zeta n } =0. \]
Moreover, $\sum_{i=0}^{n-1} X_i \le \MassPUF{C n}$, where $C$ only depends on the dimension, thus for all $\zeta >0$
\[ 
\lim_{\ell\to \infty} -\frac1\ell \log \Pb{\MassPUF\ell \ge \zeta \ell } =0.
\]

We do not know if~\eqref{eqn : ExpMoment 2}, which is equivalent to the expected mass of $\intervalleff01^d$ for $\restriction\pro{\R^d \times \intervalleof01}$ being finite, is optimal.

\subsection{Notations}
\label{subsec : Intro/Notations}

\subsubsection{Rate functions}

To lighten the notations, the rate functions defined in Theorem~\ref{thm : MAIN/UpperTail} and Corollary~\ref{cor : LDP} will be written as $\FdT$ and $\FdT^*$ instead of $\FdT_\rG$ and $\FdT_\rG^*$.

\subsubsection{Point processes}
Given a locally compact, second countable, Hausdorff topological space $\bbX$, we denote by $\ProSpaceFinite[\bbX]$ the subspace of $\ProSpace[\bbX]$ consisting of finite measures, and $\DiracSpace[\bbX]$ the subspace of $\ProSpace[\bbX]$ consisting of measures with mass $0$ or $1$, i.e.\ the trivial measure and Dirac masses on $\bbX$.

If $X$ is a mesurable function of the Poisson process $\pro$ and $\pro'$ is another point process on $\R^d \times \intervalleoo0\infty$, we denote by $X\cro{\pro'}$ the random variable obtained when one  replaces $\pro$ by $\pro'$ in the definition of $X$. We will use the same convention to indicate the dependence with respect to $\nu$.  

\subsubsection{Concatenation of animals and paths}

Consider two animals $\xi_1 = (V_1, E_1) $ and $\xi_2 = (V_2, E_2)$. We define the \emph{concatenation} of $\xi_1$ and $\xi_2$ as any minimizer of $\norme \xi$, among animals $\xi=(V,E)$ that contains both $\xi_1$ and $\xi_2$ as subgraphs. In case of nonunicity, it is chosen according to a deterministic rule. Note the following properties:
\begin{enumerate}[(i)]
	\item If $V_1 \cap V_2 \neq \emptyset$, then $(V,E) = \p{V_1\cup V_2, E_1 \cup E_2}$.
	\item If $V_1$ and $V_2$ are both nonempty, then
		\begin{equation}
			\norme{\xi_1 \concat \xi_2} \le \norme{\xi_1} + \norme{\xi_2} + \d(\xi_1, \xi_2).
		\end{equation}
	\item If $\xi_1$ and $\xi_2$ are paths that share at least one endpoint, then $\xi_1 \concat \xi_2$ is a path.
\end{enumerate}
For all animals $\xi_1, \dots, \xi_n$, we define
\[
	\xi_1 \concat \xi_2 \concat \dots \concat \xi_n \dpe (((\dots ( \xi_1 \concat \xi_2) \concat \xi_3 ) \concat \dots \concat \xi_n ).
\]


\section{Technical preliminaries}
\label{sec : Preli}

In this section we introduce the diamond processes, namely the maximal mass of animals included in a diamond and visiting both extremal points of the diamond. Contrary to $\MassGDF\cdot \cdot \cdot$, they display some superadditive properties. We then present some general tools to compare the maximal masses of animals in given sets.

\subsection{Diamond animals}
\label{subsec : Preli/AuxProcesses}

For all $x\in \R^d$, $u\in \R^d\setminus \acc 0$ and $0<\delta \le 1$, we define the \emph{cone}
\begin{equation}
  \label{eqn : intro/notations/cone}
  \Cone{\delta}{x}{u} \dpe \set{z\in \R^d }{\ps{z-x}{\frac{u}{\norme[2]{u} }} \ge (1-\delta)\norme[2]{z-x} }.
\end{equation}
For all distinct points $x,y\in \R^d$ and $0<\delta \le 1$, we define the \emph{diamond}
\begin{equation}
  \label{eqn : intro/notations/diamond}
  \Diamant{\delta}{x}{y} \dpe \Cone{\delta}{x}{y-x} \cap \Cone{\delta}{y}{x-y} 
\end{equation}
Note that for $\delta =1$, this is exactly the domain located between the two hyperplanes orthogonal to $x-y$ that contains $x$ and $y$ and respectively.

For all $0< \delta \le 1$, $\ell>0$ and distinct points $x,y\in \R^d$, we define
\begin{equation}
	\SetGDC{\delta}{x}{y}{\ell} \dpe \set{\xi \in \SetGDF{x}{y}{\ell} }{\xi \subseteq \Diamant{\delta}{x}{y}}
\end{equation}
We say that an animal (resp. path) is a \emph{cylinder animal} (resp.\ \emph{cylinder path}) if it belongs to $\SetADCalt{1}{x}{y}{\ell}$ (resp.\ $\SetPDC{1}{x}{y}{\ell}$), for some $x$, $y$ and $\ell$. The diamond process $\MassGDC{\delta}{x}{y}{\ell}$ is defined as the maximal (maybe penalized) mass over animals or paths in this set.
Proposition~\ref{prop : Preli/LLN} states that the law of large numbers~\eqref{eqn : Intro/LLN/Directed} also holds for the diamond process, with the same limit. It is a consequence of Lemmas~2.2 to~2.4 in \cite{Article3}. 
\begin{Proposition}
	\label{prop : Preli/LLN}
	Let $0<\delta\le 1$. For all $\beta \in \intervalleoo01$,
	\begin{equation}
		\label{eqn : Preli/LLN/GDC}
		\frac{\MassGDC{\delta}{0}{\ell \beta \base 1}{\ell} }{\ell } \xrightarrow[\ell \to \infty]{\text{a.s.\ and }\rL^1} \LimMassG(\beta).
	\end{equation}
\end{Proposition}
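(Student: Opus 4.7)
The upper bound is immediate: the inclusion $\SetGDC{\delta}{0}{\ell\beta\base 1}{\ell} \subseteq \SetGDF{0}{\ell\beta\base 1}{\ell}$ combined with Theorem~\ref{thm : Intro/LLN} yields $\limsup_{\ell\to\infty} \MassGDC{\delta}{0}{\ell\beta\base 1}{\ell}/\ell \le \LimMassG(\beta)$, both almost surely and in $L^1$. All the work lies in the matching lower bound, for which my plan is to run a superadditive argument along the axis $\R\base 1$.

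A direct geometric check (parametrizing each cone by its half-angle $\theta_\delta$ with $\cos\theta_\delta = 1-\delta$, and comparing the sloped boundaries at the widest cross-section) shows that for all $m,n \ge 1$,
\[
\Diamant{\delta}{0}{m\beta\base 1} \cup \Diamant{\delta}{m\beta\base 1}{(m+n)\beta\base 1} \subseteq \Diamant{\delta}{0}{(m+n)\beta\base 1}.
\]
Picking optimal animals $\xi_1 \in \SetGDC{\delta}{0}{m\beta\base 1}{m}$ and $\xi_2 \in \SetGDC{\delta}{m\beta\base 1}{(m+n)\beta\base 1}{n}$, both contain the junction point $m\beta\base 1$, so $\xi_1 \concat \xi_2$ belongs to $\SetGDC{\delta}{0}{(m+n)\beta\base 1}{m+n}$. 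Taking expectations and exploiting the independence of the two halves gives a superadditive inequality to which a Fekete-type lemma applies, yielding existence of $C(\delta,\beta) \dpe \lim_\ell \E{\MassGDC{\delta}{0}{\ell\beta\base 1}{\ell}}/\ell$. Concentration estimates of the type developed in Section~\ref{sec : Concentration} then promote this to an almost-sure and $L^1$ convergence.

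It remains to identify $C(\delta,\beta)$ with $\LimMassG(\beta)$. The inequality $C(\delta,\beta) \le \LimMassG(\beta)$ is the upper bound above. The main obstacle is the reverse: a near-optimal free animal of length $\ell$ from $0$ to $\ell\beta\base 1$ can, a priori, spread throughout the prolate spheroid $\set{z}{\norme z + \norme{z - \ell\beta\base 1} \le \ell}$, whose radial width is of order $\ell\sqrt{1-\beta^2}$, and is therefore not automatically a diamond animal. To close this gap, I would subdivide $[0, \ell\beta\base 1]$ into many mesoscopic pieces of length $\ell/n$, apply Theorem~\ref{thm : Intro/LLN} on each piece to obtain mass $\approx \LimMassG(\beta)\ell/n$, and use a pigeonhole argument to extract enough pieces whose optimal free animal is contained in the corresponding subdiamond, before concatenating them via the geometric step above. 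This is the strategy carried out in Lemmas~2.2 to~2.4 of \cite{Article3}, which I would invoke rather than reprove.
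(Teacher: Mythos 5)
Your route ultimately bottoms out in exactly the same place as the paper's: the paper gives no standalone proof of Proposition~\ref{prop : Preli/LLN} but derives it directly from Lemmas~2.2 to~2.4 of \cite{Article3}, and your plan invokes those same lemmas for the only genuinely hard ingredient, the identification $C(\delta,\beta)=\LimMassG(\beta)$. The scaffolding you add on top is mostly sound: the cone-nesting inclusion $\Diamant{\delta}{0}{m\beta\base 1}\cup\Diamant{\delta}{m\beta\base 1}{(m+n)\beta\base 1}\subseteq\Diamant{\delta}{0}{(m+n)\beta\base 1}$ is correct, and superadditivity of expectations plus Fekete is fine; note only that for $\cG=\cA^*$ the optimizers need not contain the junction point (elements of $\SetADFalt{x}{y}{\ell}$ are merely penalized by their distance to the anchors), but concatenability in the sense of Definition~\ref{def : Concatenation/Concatenables} and hence Lemma~\ref{lem : Concatenation} still apply, so that slip is cosmetic.

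The genuine gap is the sentence claiming that ``concentration estimates of the type developed in Section~\ref{sec : Concentration}'' upgrade the Fekete limit to a.s.\ and $\rL^1$ convergence. Propositions~\ref{prop : Concentration/Unpen} and~\ref{prop : Concentration/Pen} control only \emph{upward} deviations above the mean and are proved under the extra hypothesis that $\nu$ is supported in $\intervalleof{0}{m}$, whereas the proposition is needed under~\eqref{ass : Greedy} alone; moreover the direction you actually need is the lower tail, since $\limsup_\ell \MassGDC{\delta}{0}{\ell\beta\base 1}{\ell}/\ell\le\LimMassG(\beta)$ already follows from the free process. The repair does not require concentration at all: by your own concatenation step, $\MassGDC{\delta}{0}{nL\beta\base 1}{nL}$ dominates a sum of $n$ i.i.d.\ copies of $\MassGDC{\delta}{0}{L\beta\base 1}{L}$, so the strong law along the skeleton $\ell=nL$, together with the monotonicity $\MassGDC{\delta}{0}{\ell\beta\base 1}{\ell}\ge\MassGDC{\delta}{0}{nL\beta\base 1}{nL}$ for $nL\le\ell$ (extend towards the farther anchor along the axis, which stays in the nested diamond and within the length budget), gives $\liminf_\ell \MassGDC{\delta}{0}{\ell\beta\base 1}{\ell}/\ell\ge C(\delta,\beta)$ almost surely; $\rL^1$ convergence then follows from the a.s.\ limit, the domination $\MassGDC{\delta}{0}{\ell\beta\base 1}{\ell}\le\MassGDF{0}{\ell\beta\base 1}{\ell}$ and the $\rL^1$ convergence in Theorem~\ref{thm : Intro/LLN} (generalized dominated convergence). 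Alternatively --- and this is what the paper does --- cite Lemmas~2.2 to~2.4 of \cite{Article3} for the whole statement; once you invoke them for the identification of the constant, the superadditivity-plus-concentration detour adds nothing.
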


\subsection{The concatenation and decomposition lemmas}
This section's main results, Lemmas~\ref{lem : Concatenation} and~\ref{lem : Decomposition}, encapsulate all our construction arguments, as they provide some inequalities between the maximal animal masses, over different sets of animals, assuming these sets satisfy certain concatenation relations.
\begin{Definition}
	\label{def : Concatenation/Concatenables}
	Let $\mathcal G_1,\dots,\mathcal G_k$ and $\mathcal G$ be collections of animals. We say that the family $(\mathcal G_1,\dots,\mathcal G_k)$ is \emph{concatenable in} $\cG$ if there exists a Lebesgue-negligible set $B\subseteq \R^d$ such that for all $\xi_1 \in \mathcal G_1,\dots,\xi_k \in \mathcal G_k$,
	\begin{equation} \label{eqn : Concatenation/concat} \xi_1 \concat \dots \concat \xi_k \in \mathcal G \end{equation} 
	and \begin{equation} \label{eqn : Concatenation/intersection} \forall 1\le i < j \le k,\quad \xi_i \cap \xi_j \subseteq B.\end{equation}
\end{Definition}
\begin{Lemma}
	\label{lem : Concatenation}
	Let $q \in \intervalleff0\infty$. Let $\mathcal G_1$,...,$\mathcal G_k$ and $\mathcal G$ be collections of animals. Assume that $(\mathcal G_1,\dots,\mathcal G_k)$ is \emph{concatenable in} $\cG$. Let us denote
	\begin{equation}
		\rG^{(q)} \dpe \sup_{\xi \in \cG} \Masspen \xi \text{ and } \rG_{i}^{(q)} \dpe \sup_{\xi \in \cG_{i}} \Masspen \xi. 	
	\end{equation}
	Then almost surely,
	\begin{equation}
		\label{eqn : Concatenation/Inequality}
	   	\sum_{i=1}^k \rG_i^{(q)}  \le \rG^{(q)},
	\end{equation}
	and the terms of the left-hand side are independent.
\end{Lemma}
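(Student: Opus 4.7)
The plan is to split the statement into its two assertions, the almost-sure inequality and the independence of the summands, and to reduce both to a single structural observation: since $\pro$ has intensity $\Leb \otimes \nu$ and $B$ is Lebesgue-negligible, the event
\[
	\Omega_0 \dpe \acc{ \projpro \cap B = \emptyset }
\]
has probability one.

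For the inequality, fix $\xi_i \in \cG_i$ for each $i$, write $V_i$ for the vertex set of $\xi_i$, and let $\xi \dpe \xi_1 \concat \cdots \concat \xi_k$ with vertex set $V$. A first step is to check, by induction on $k$, that $V = V_1 \cup \cdots \cup V_k$: in a binary concatenation, either the two vertex sets meet and property (i) gives the claim directly, or they are disjoint and the minimum length is realized by adding a single straight edge between the closest pair of points, so no Steiner vertex is ever required. On $\Omega_0$, the pairwise overlaps $V_i \cap V_j \subseteq B$ contain no Poisson point, so masses add without redundancy,
\[
	\Mass V = \sum_{i=1}^k \Mass{V_i},
\]
while subadditivity of cardinality on a union gives $\#(V \setminus \projpro) \le \sum_i \#(V_i \setminus \projpro)$. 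Combining,
\[
	\sum_{i=1}^k \Masspen{\xi_i} \le \Masspen \xi \le \rG^{(q)},
\]
using that $\xi \in \cG$. Taking the supremum over $\xi_i \in \cG_i$ separately in each summand of the left-hand side yields $\sum_i \rG_i^{(q)} \le \rG^{(q)}$ almost surely.

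For the independence, I would introduce $W_i \dpe \bigcup_{\xi \in \cG_i} V(\xi) \subseteq \R^d$. The concatenability hypothesis forces $W_i \cap W_j \subseteq B$ whenever $i \neq j$, so the $W_i$'s are pairwise Lebesgue-disjoint. Since any $\Masspen \xi$ with $\xi \in \cG_i$ depends only on the restriction of $\pro$ to $V(\xi) \times \intervalleoo0\infty \subseteq W_i \times \intervalleoo0\infty$, the variable $\rG_i^{(q)}$ is measurable with respect to the $\sigma$-algebra generated by that restriction. Independence of the $\rG_i^{(q)}$'s then follows from the standard independence property of Poisson point processes on disjoint Borel regions.

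The main obstacle will be measurability: the family $\cG_i$ can be uncountable, and $W_i$ need not be Borel. I expect to circumvent these issues by extracting a countable cofinal subfamily of $\cG_i$, leveraging the a.s.\ local finiteness of $\pro$ so that only finitely many candidate animals are effectively relevant in any compact window, and by enlarging each $W_i$ to a Borel hull whose pairwise intersections remain Lebesgue-null, in order to legitimately invoke the Poisson independence property.
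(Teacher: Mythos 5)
Your argument is correct and follows essentially the same route as the paper: work on the almost-sure event that $B$ carries no Poisson point, add the masses and use subadditivity of the penalty term over the union, use that the concatenation lies in $\cG$ with no extra vertices, take suprema, and deduce independence from the fact that the collections $\cG_i$ live in regions of $\R^d$ meeting only inside the Lebesgue-null set $B$. The measurability caveats you raise at the end are genuine but are equally left implicit in the paper's one-sentence independence argument, so they do not amount to a different method.
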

\begin{proof}
	Let us assume that the almost sure event $\Mass{B}=0$ occurs. Let $\xi_1 \in \mathcal G_1,\dots, \xi_k \in \mathcal G_k$. By~\eqref{eqn : Concatenation/intersection},
	\[ \sum_{i=1}^k \Mass{\xi_i} = \Mass{ \bigcup_{i=1}^k \xi_i }. \]
	Besides, $\# \p{ (\xi_1\cap \xi_2)\cap \p{\R^d \setminus \projpro} } \le \# \p{ \xi_1\cap \p{\R^d \setminus \projpro} } + \# \p{\xi_2\cap \p{\R^d \setminus \projpro} }$, thus
	\[ \sum_{i=1}^k \Masspen{\xi_i} \le \Masspen{ \bigcup_{i=1}^k \xi_i }. \]
	By~\eqref{eqn : Concatenation/concat},
	\[ \sum_{i=1}^k \Masspen{\xi_i} \le \sup_{\xi \in \mathcal G} \Masspen{\xi}, \]
	thus taking the supremum with respect to $(\xi_i)_{1\le i \le k}$ yields~\eqref{eqn : Concatenation/Inequality}. The independence comes from the fact that the terms of the sum live in pairwise disjoint subsets of $\R^d$, and $\pro$ is a Poisson process.
\end{proof}
\begin{Definition}
	\label{def : Concatenation/Decomposition}
	Let $\p{ \mathcal G_{n,i} }_{ \substack{ 1 \le n \le N \\ 1\le i \le I_n}}$ be a finite family of collections of animals of $\R^d$. Let $\cG$ be another collection of animals of $\R^d$. We say that $\cG$ is \emph{decomposable} in $\p{ \mathcal G_{n,i} }$ if for all $\xi \in \cG$, there exists $1\le n \le N$ and $\xi_1 \in \cG_{n,1}, \dots, \xi_{I_n} \in \cG_{n, I_n}$, such that
	\begin{equation}
		\xi \subseteq \bigcup_{i=1}^{I_n} \xi_i.
	\end{equation}
\end{Definition}
\begin{Lemma}
	\label{lem : Decomposition}
	Let $\cG$ and $\p{ \mathcal G_{n,i} }_{ \substack{ 1 \le n \le N \\ 1\le i \le I_n} }$ be as in Definition~\ref{def : Concatenation/Decomposition}. Assume that there exists $r>0$ such that 
	\[
		\forall \xi \in \cG \cup \p{ \bigcup_{\substack{ 1 \le n \le N \\ 1\le i \le I_n}} \mathcal G_{n,i} }, \quad \xi \in \clball{0,r}.
	\]
	Let us denote
	\begin{equation}
		\label{eqn : Decomposition/Bigcup}
		\rG \dpe \sup_{\xi \in \cG} \Mass \xi \text{ and } \rG_{n,i} \dpe \sup_{\xi \in \cG_{n,i}} \Mass \xi. 	
	\end{equation}
	Then for all $t>0$,
	\begin{equation}
		\label{eqn : Decomposition/Main_inequality}
		\Pb{\rG \ge t} \le \sum_{n=1}^N \sum_{\substack{ (t_i) \in \N^{I_n} \\ t-I_n \le \sum t_i \le t} } \prod_{i=1}^{I_n} \Pb{ \rG_{n,i} \ge t_i }.
	\end{equation}
\end{Lemma}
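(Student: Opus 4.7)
The plan is a three-step reduction: (i) use decomposability to bound $\Pb{\rG \ge t}$ by probabilities involving the sums $\sum_i \rG_{n,i}$; (ii) discretize each such sum event into a finite union of events of the form $\bigcap_i \{\rG_{n,i}\ge t_i\}$ indexed by integer allocations $(t_i)$; (iii) conclude by independence of the $\rG_{n,i}$'s for fixed $n$, which in the intended applications holds because the $\cG_{n,i}$'s are supported on pairwise disjoint subsets of $\R^d$, making the $\rG_{n,i}$'s functionals of $\pro$ on disjoint regions of $\R^d \times \intervalleoo0\infty$.

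For step (i), I would argue on $\{\rG \ge t\}$: the boundedness assumption $\xi \subseteq \clball{0,r}$ ensures $\rG<\infty$ a.s., hence for every $\varepsilon>0$ there exists $\xi \in \cG$ with $\Mass \xi \ge t - \varepsilon$. Applying decomposability produces $n$ and $\xi_i \in \cG_{n,i}$ with $\xi \subseteq \bigcup_i \xi_i$, so $\Mass \xi \le \sum_i \Mass{\xi_i} \le \sum_i \rG_{n,i}$. Letting $\varepsilon \to 0$ and applying a union bound over $n$ gives $\Pb{\rG \ge t} \le \sum_n \Pb{\sum_i \rG_{n,i} \ge t}$.

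The heart of the argument is step (ii), which I expect to be the main obstacle: the allocation must simultaneously respect \emph{both} $\sum t_i \ge t - I_n$ \emph{and} $\sum t_i \le t$. My plan is a proportional rounding: on $\{S \dpe \sum_i \rG_{n,i} \ge t\}$, define
\[ t_i \dpe \floor{\rG_{n,i}\, t/S}. \]
Since $t/S \le 1$, each $t_i \in \N$ satisfies $t_i \le \rG_{n,i}$, so the intersection $\bigcap_i \{\rG_{n,i}\ge t_i\}$ holds; summing gives $\sum_i t_i \le (t/S)\sum_i \rG_{n,i} = t$ and $\sum_i t_i \ge t - I_n$ (each floor losing at most one). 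A naive choice such as $t_i = \floor{\rG_{n,i}}$ would produce only the lower bound, leaving the right-hand side of~\eqref{eqn : Decomposition/Main_inequality} infinite and the inequality vacuous; the factor $t/S$ is what enforces the complementary upper bound. Thus $\{\sum_i \rG_{n,i} \ge t\}$ is covered by the finite union over admissible allocations, and a second union bound together with independence in step (iii) delivers~\eqref{eqn : Decomposition/Main_inequality}.
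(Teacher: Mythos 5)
Your steps (i) and (ii) are sound: the reduction to the events $\{\sum_i \rG_{n,i}\ge t\}$ and the proportional rounding $t_i=\floor{\rG_{n,i}t/S}$ do produce exactly the integer allocations $t-I_n\le\sum_i t_i\le t$ appearing in~\eqref{eqn : Decomposition/Main_inequality} (the paper performs an equivalent relaxation from exact real allocations to integer ones). The gap is in step (iii). The lemma assumes only decomposability in the sense of Definition~\ref{def : Concatenation/Decomposition}, which imposes \emph{no} disjointness on the collections $\cG_{n,1},\dots,\cG_{n,I_n}$; and in the intended application (the eligible-skeleton decomposition of paths in the proof of Proposition~\ref{prop : UpperTail/Limit}), the cylinders $\SetPDC1{y_r^E}{z_r^E}{\ell_r^E}$ are slabs with overlapping interiors, so the variables $\rG_{n,1},\dots,\rG_{n,I_n}$ are \emph{not} functionals of $\pro$ on disjoint regions and are not independent. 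Worse, without independence your final bound goes in the wrong direction: the events $\{\rG_{n,i}\ge t_i\}$ are all increasing in the same Poisson process, so Harris/FKG gives $\Pb{\bigcap_i \acc{\rG_{n,i}\ge t_i}}\ge \prod_i \Pb{\rG_{n,i}\ge t_i}$, and the plain intersection cannot be bounded by the product. So the proof as written establishes the lemma only under an extra disjointness hypothesis that is neither stated nor available where the lemma is used.

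The missing idea is to replace the intersection by the \emph{disjoint occurrence} and invoke van den Berg's BK inequality for Poisson point processes, which is how the paper proceeds: after reducing to finite intensity $\nu$ by monotone convergence, one takes a maximizing $\xi\in\cG$, its cover $\xi\subseteq\bigcup_i\xi_i$, and splits the configuration $\eta=\sum_i\eta_i$ so that each atom contributing to $\Mass\xi$ is attributed to a single covering piece $\xi_i$; this shows that on $\{\rG\ge t\}$ the masses $t_i$ are gathered on \emph{disjoint} sub-configurations, i.e.\ $\acc{\rG\ge t}\subseteq\bigcup_n\bigcup_{(t_i)}\acc{\rG_{n,1}\ge t_1}\circ\dots\circ\acc{\rG_{n,I_n}\ge t_{I_n}}$, and then~\eqref{eqn : Decomposition/BK} yields the product even though the $\rG_{n,i}$ are dependent. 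Your rounding argument can be kept, but it must be applied to the attributed masses (the witnessing split), not merely to the values $\rG_{n,i}$, so that the disjoint-occurrence structure is preserved.
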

The proof of Lemma~\ref{lem : Decomposition} relies on a version of BK inequality for Poisson point processes shown by Van den Berg \cite{vdBer96}. We first need to introduce the \emph{disjoint occurence} in this context. We say that a subset $F$ of $\ProSpaceFinite$ is \emph{increasing} if \[ \forall \eta_1, \eta_2 \in \ProSpaceFinite, \quad \p{ \eta_1 \le \eta_2 \text{ and } \eta_1 \in F } \implies \eta_2 \in F. \] 
Given mesurable, increasing subsets $F_1,\dots, F_m$ of $\ProSpaceFinite$, we define the \emph{disjoint occurence} of $F_1, \dots, F_m$ as
\begin{equation}
	F_1 \circ \dots \circ F_m \dpe \set{ \sum_{i=1}^m \eta_i}{ (\eta_1,\dots, \eta_m) \in \prod_{i=1}^m F_i }.
\end{equation}
The main result in \cite{vdBer96} implies that if $\eta$ is a simple Poisson point processes on $\R^d \times \intervalleoo0\infty$ with finite intensity, then
\begin{equation}
	\label{eqn : Decomposition/BK}
	\Pb{ \eta \in \p{ F_1 \circ \dots \circ F_m  } } \le \prod_{i=1}^m \Pb{\eta \in F_i }.
\end{equation}

\begin{proof}[Proof of Lemma~\ref{lem : Decomposition}]
	By monotone convergence, we may assume that $\nu$ is finite, thus the processes $\rG$ and $\rG_{n,i}$ are mesurable functions of a finite intensity Poisson point process. Let $\eta\in \ProSpaceFinite$. There exists $\xi \in \cG$ such that $\Mass\xi\cro\eta = \rG\cro\eta$. Consider the decomposition~\eqref{eqn : Decomposition/Bigcup}. Let $\p{\eta_1, \dots, \eta_{I_n}}$ be any $I_n$-uple of elements of $\ProSpaceFinite$ such that
	\begin{equation*}
		\eta = \sum_{i=1}^{I_n} \eta_i \text{ and } \Mass\xi\cro\eta = \sum_{i=1}^{I_n} \Mass{\xi_i}\cro{\eta_i}.
	\end{equation*}
	Then we have
	\begin{equation*}
		\rG \cro\eta \le \sum_{i=1}^{I_n} \rG_{n,i}\cro{\eta_i}. 
	\end{equation*}
	Consequently, for all $t>0$,
	\begin{align}
		\acc{\rG \ge t} &\subseteq \bigcup_{n=1}^N \bigcup_{\substack{ (t_i) \in \R^{I_n} \\ \sum t_i =t} } \acc{\rG_{n,1} \ge t_1} \circ \dots \circ \acc{\rG_{n,I_n} \ge t_{I_n}} \eol
		&\subseteq \bigcup_{n=1}^N \bigcup_{\substack{ (t_i) \in \N^{I_n} \\ t-I_n \le \sum t_i \le t} } \acc{\rG_{n,1} \ge t_1} \circ \dots \circ \acc{\rG_{n,I_n} \ge t_{I_n}}.
	\end{align}
	By union bound,
	\begin{equation*}
		\Pb{\rG \ge t} \le \sum_{n=1}^N \sum_{\substack{ (t_i) \in \N^{I_n} \\ t-I_n \le \sum t_i \le t} } \Pb{ \acc{\rG_{n,1} \ge t_1} \circ \dots \circ \acc{\rG_{n,I_n} \ge t_{I_n}} }
	\end{equation*}
	Applying~\eqref{eqn : Decomposition/BK} yields~\eqref{eqn : Decomposition/Main_inequality}.
\end{proof}


\section{Lower-tail large deviation estimate}
\label{sec : LowerTail}

In this section we prove Theorem~\ref{thm : MAIN/LowerTail}. The main ingredient to show~\eqref{eqn : MAIN/LowerTail/Directed} is Lemma~\ref{lem : LowerTail/WinCon}, whose proof is postponed to the end of the section. The estimate~\eqref{eqn : MAIN/LowerTail/Undirected} will easily follow from~\eqref{eqn : MAIN/LowerTail/Directed}. We take some inspiration from the proof of (5.13) in \cite{KestenStFlour}.

\begin{Lemma}
	\label{lem : LowerTail/WinCon}
	Let $\beta \in \intervalleoo01$, $\zeta < \LimMassG(\beta)$ and $\eps \dpe \frac14\p{\LimMassG(\beta) - \zeta}$. Then there exist $0< \delta < 1$ and $L>0$ such that for large enough $\ell$,
	\begin{equation}
		\label{eqn : LowerTail/WinCon/Esperance}
		\frac1\ell \floor{\frac{(1-\delta)\ell }{L}} \E{ \MassGDC{1/2}{0}{ \frac{L \beta \base1}{1-\delta}}{L} } \ge \zeta + \eps,
	\end{equation}
	and we have the almost sure inclusion\footnote{We say that an event $A$ is almost included in an event $B$ if $\Pb{A \cap B^\mathrm{c} }=0$.  }
	\begin{equation}
		\label{eqn : LowerTail/WinCon/Inclusion}
		\begin{split}
		&\acc{\MassGDF{0}{\ell \beta \base 1}{\ell} \le \ell \zeta  } \subseteq \\
			&\quad \bigcap_{k \in \intint{0}{\floor{ \frac{\delta \ell}{4d} }}^{d-1}\times \acc0 } %
			\acc{ \sum_{n=0}^{\floor{\frac{(1-\delta)\ell }{L}} -1} \MassGDC{1/2}{2Lk + \frac{ n L \beta \base 1}{1-\delta} }{ 2Lk + \frac{ (n+1) L \beta \base1}{1-\delta} }{L} \le \ell \zeta }.
		\end{split}
	\end{equation}
\end{Lemma}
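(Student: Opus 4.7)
For inequality~\eqref{eqn : LowerTail/WinCon/Esperance}, my plan is a two-step approximation. Since $\LimMassG$ is continuous at $\beta$ by Theorem~\ref{thm : Intro/LLN} and $\LimMassG(\beta) = \zeta + 4\eps$, I can first pick $\delta \in (0, 1-\beta)$ small enough that $(1-\delta)\LimMassG(\beta/(1-\delta)) \ge \zeta + 3\eps$. Setting $\beta' \dpe \beta/(1-\delta)$, Proposition~\ref{prop : Preli/LLN} yields $\frac{1}{L}\E{\MassGDC{1/2}{0}{L\beta'\base 1}{L}} \to \LimMassG(\beta')$ as $L \to \infty$, so I then choose $L$ so large that this ratio is at least $\LimMassG(\beta') - \eps/(1-\delta)$. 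Finally, since $\floor{(1-\delta)\ell/L}/\ell \to (1-\delta)/L$ as $\ell \to \infty$, the product
\[
\frac{1}{\ell}\floor{\frac{(1-\delta)\ell}{L}}\E{\MassGDC{1/2}{0}{L\beta'\base 1}{L}}
\]
converges to $(1-\delta)(\LimMassG(\beta') - \eps/(1-\delta)) = (1-\delta)\LimMassG(\beta') - \eps \ge \zeta + 2\eps$, which yields~\eqref{eqn : LowerTail/WinCon/Esperance} for large $\ell$.

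For~\eqref{eqn : LowerTail/WinCon/Inclusion}, my plan is to argue by almost-sure contraposition. I will set $a_{k,n} \dpe 2Lk + nL\beta\base 1/(1-\delta)$ and $N \dpe \floor{(1-\delta)\ell/L}$, work on the almost sure event where no $a_{k,n}$ carries a Poisson mass (a countable intersection of a.s.\ events, since $\Leb\{a_{k,n}\} = 0$), and suppose that for some $k$ in the index set $S_k \dpe \sum_{n=0}^{N-1} \MassGDC{1/2}{a_{k,n}}{a_{k,n+1}}{L} > \ell\zeta$. For arbitrary $\eta > 0$, I then choose near-maximizers $\xi_{k,n} \in \SetGDC{1/2}{a_{k,n}}{a_{k,n+1}}{L}$ within $\eta/N$ of the supremum and form $\xi_k \dpe \xi_{k,0} \concat \cdots \concat \xi_{k,N-1}$. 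Consecutive diamonds $\Diamant{1/2}{a_{k,n}}{a_{k,n+1}}$ intersect only at the (massless) point $a_{k,n+1}$, so $\Mass{\xi_k} = \sum_n \Mass{\xi_{k,n}} \ge S_k - \eta$; and since consecutive $\xi_{k,n}$'s share that point as an endpoint, $\norme{\xi_k} \le NL \le (1-\delta)\ell$. Adjoining straight edges from $0$ to $a_{k,0} = 2Lk$ and from $a_{k,N}$ to $\ell\beta\base 1$ produces an element of $\SetGDF{0}{\ell\beta\base 1}{\ell'}$ with
\[
\ell' \le (1-\delta)\ell + 2\norme{2Lk} + \beta L/(1-\delta),
\]
using the triangle inequality and the bound $\norme{a_{k,N}-\ell\beta\base 1} \le \norme{2Lk} + \beta L/(1-\delta)$.

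The main obstacle is verifying $\ell' \le \ell$ uniformly in $k$: using $\norme{2Lk} \le 2L\sqrt{d-1}\floor{\delta\ell/(4d)} \le L\sqrt{d-1}\,\delta\ell/(2d)$, the desired bound reduces, for large $\ell$, to $L\sqrt{d-1}/d \le 1$, i.e.\ $L$ bounded by a constant depending only on $d$. This is in tension with the large-$L$ requirement of Part~1, and reconciling the two is the delicate step: I plan to take $\delta$ very small first so that the Part~1 threshold $(\zeta+\eps)/(1-\delta)$ stays well below $\LimMassG(\beta/(1-\delta))$ with a margin $\approx 3\eps$, leaving enough slack for a moderate $L$ to both clear this threshold and satisfy the bridge constraint. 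With $\ell' \le \ell$ established, sending $\eta \to 0$ gives $\MassGDF{0}{\ell\beta\base 1}{\ell} \ge S_k > \ell\zeta$, contradicting the LHS of the inclusion; intersecting over all valid $k$ produces~\eqref{eqn : LowerTail/WinCon/Inclusion}.
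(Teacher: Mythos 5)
Your treatment of~\eqref{eqn : LowerTail/WinCon/Esperance} is correct and is essentially the paper's argument: choose $\delta$ by continuity of $\LimMassG$, then $L$ by the diamond law of large numbers (Proposition~\ref{prop : Preli/LLN}), then absorb the factor $\frac1\ell\floor{\frac{(1-\delta)\ell}{L}}\approx\frac{1-\delta}{L}$ for large $\ell$. Your derivation of the inclusion is also, in spirit, the paper's: the paper compresses it into the single inequality $\MassGDF{0}{\ell \beta \base 1}{\ell} \ge \sum_{n} \MassGDC{1/2}{2Lk + \frac{ n L \beta \base 1}{1-\delta} }{ 2Lk + \frac{ (n+1) L \beta \base1}{1-\delta} }{L}$, invoked as a special case of Lemma~\ref{lem : Concatenation}, whereas you unroll the concatenation by hand with near-maximizers and explicit bridge segments (note that for $\rG=\rA^{(q)}$ you should use the $\SetADFalt\cdot\cdot\cdot$ formulation rather than literally adjoining edges at $0$ and $\ell\beta\base 1$, so as not to add penalized vertices; this is minor).

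The genuine gap is in the step you yourself flag as ``delicate''. Your bridge bound requires, uniformly over $k\in\intint{0}{\floor{\delta\ell/(4d)}}^{d-1}\times\acc0$, that $(1-\delta)\ell+2\norme{2Lk}+O(L)\le\ell$, and since $\norme{2Lk}$ can be as large as $\tfrac{\sqrt{d-1}}{2d}L\delta\ell$, this forces $L\le d/\sqrt{d-1}$, an absolute constant. Your proposed reconciliation --- take $\delta$ very small first, then use a ``moderate'' $L$ --- cannot work: both the available slack $\delta\ell$ and the worst transverse displacement $\asymp L\delta\ell$ scale linearly in $\delta$, so shrinking $\delta$ does not relax the constraint $L\lesssim d/\sqrt{d-1}$ at all; on the other hand, the choice of $L$ in part~1 is dictated by how fast $\frac1L\E{\MassGDC{1/2}{0}{L\beta\base1/(1-\delta)}{L}}$ approaches $\LimMassG(\beta/(1-\delta))$, for which Proposition~\ref{prop : Preli/LLN} provides no rate, so no bound on $L$ by a dimensional constant can be assumed. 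The tension you identified is real and comes from the transverse index range as printed in the statement (the paper's own proof does not address it, hiding it inside the appeal to Lemma~\ref{lem : Concatenation}); but the correct resolution is not to cap $L$ --- it is to restrict the corridors to those $k$ with $\norme{2Lk}\le\delta\ell/4$ (equivalently, let $k$ range over a box of side $\asymp\delta\ell/(dL)$). For fixed $\delta$ and $L$ this still yields $\Theta(\ell^{d-1})$ pairwise independent corridors, which is all that the proof of Theorem~\ref{thm : MAIN/LowerTail} uses, and then the length budget closes for every admissible $k$ since $(1-\delta)\ell+\tfrac{\sqrt{d-1}}{d}\cdot\delta\ell+O(L)\le\ell$ for large $\ell$. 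As written, your argument establishes the inclusion only for $L$ below a dimensional constant, and part~1 only for $L$ large, so the two halves of your proof never hold for a common choice of $(\delta,L)$.
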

\begin{proof}[Proof of Theorem~\ref{thm : MAIN/LowerTail}]
	We first prove~\eqref{eqn : MAIN/LowerTail/Directed}. Let $\beta$, $\zeta$, $\eps$, $\delta$ and $L>0$ be as in Lemma~\ref{lem : LowerTail/WinCon}. Note that the diamonds $\Diamant{1/2}{2Lk + \frac{ n L \beta \base 1}{1-\delta} }{ 2Lk + \frac{ (n+1) L \beta \base 1}{1-\delta} }$ involved in~\eqref{eqn : LowerTail/WinCon/Inclusion} are pairwise disjoint, except maybe at their endpoints, thus the variables \[ \MassGDC{1/2}{2Lk + \frac{ n L \beta \base 1}{1-\delta} }{ 2Lk + \frac{ (n+1) L \beta \base 1}{1-\delta} }{L}, \] for $k \in \intint{0}{\floor{ \frac{\delta \ell}{4d} }}^{d-1}\times \acc0$ and $n\in\intint{0}{\floor{\frac{(1-\delta)\ell }{L}} -1}$, are independent. By stationarity of the model and~\eqref{eqn : LowerTail/WinCon/Inclusion}, for large enough $\ell$,
	\begin{equation}
		\label{eqn : LowerTail/PreuveMAIN/WinCon}
		\Pb{ \MassGDF{0}{\ell \beta \base 1}{\ell} \le \ell \zeta } \le \Pb{ \frac{1}{N}\sum_{n=1}^N G_n \le \frac{ \ell \zeta }{N}    }^{N^{d-1}},
	\end{equation}
	where $N\dpe \floor{\frac{(1-\delta)\ell }{L}}$ and the $G_n$ are i.i.d\ copies of $ \MassGDC{1/2}{0}{ \frac{L \beta \base 1}{1-\delta} }{L}$. Besides, by~\eqref{eqn : LowerTail/WinCon/Esperance} and Chernoff's bound there exists $c = c(\beta, \delta, L)>0$ such that for large enough $\ell > 0$,
	\begin{equation*}
		\Pb{ \frac{1}{N}\sum_{n=1}^N G_n \le \frac{ \ell \zeta }{N}   } \le \exp\p{ -c \ell}.
	\end{equation*}
	Combining this inequality with~\eqref{eqn : LowerTail/PreuveMAIN/WinCon} yields~\eqref{eqn : MAIN/LowerTail/Directed}.

	We now turn to the proof of~\eqref{eqn : MAIN/LowerTail/Undirected}. Fix $\zeta < \LimMassG(0)$. Since $\LimMassG$ is continuous on $\intervalleff01$, there exists $\beta \in \intervalleoo01$ such that $\zeta < \LimMassG(\beta)$. Moreover, for all $\ell>0$,
	\begin{equation*}
		\acc{\MassGUF\ell \le \ell \zeta } \subseteq \acc{ \MassGDF{0}{\ell \beta \base 1 }{\ell} \le \ell \zeta },
	\end{equation*}
	thus~\eqref{eqn : MAIN/LowerTail/Undirected} is a consequence of~\eqref{eqn : MAIN/LowerTail/Directed}.
\end{proof}
\begin{proof}[Proof of Lemma~\ref{lem : LowerTail/WinCon}]
	Let $\beta \in \intervalleoo01$, $\zeta < \LimMassG(\beta )$ and $\eps \dpe \frac14\p{\LimMassG(\beta) - \zeta}$. By continuity of $\LimMassG$, there exists $\delta_1 \in \intervalleoo0{1- \beta}$ such that for all $\delta \in \intervalleof0{ \delta_1}$,
	\begin{equation}
		\label{eqn : LowerTail/WinCon/estime1}
		\LimMassG\p{ \frac{\beta}{1-\delta} } \ge \LimMassG(\beta) - \eps.
	\end{equation}
	By~\eqref{eqn : Preli/LLN/GDC}, there exists $L>0$ such that
	\begin{equation}
		\label{eqn : LowerTail/WinCon/estime2}
		\frac1L \E{ \MassGDC{1/2}{0}{ \frac{L \beta \base 1}{1-\delta} }{L} } \ge \LimMassG\p{ \frac{\beta}{1-\delta} } -\eps.
	\end{equation}
	Moreover, for small enough $\delta>0$, for large enough $\ell > 0$,
	\begin{equation}
		\label{eqn : LowerTail/WinCon/estime3}
		\frac1\ell \floor{\frac{(1-\delta)\ell }{L}} \E{ \MassGDC{1/2}{0}{ \frac{L \beta \base 1}{1-\delta} }{L} } \ge \frac1L \E{ \MassGDC{1/2}{0}{ \frac{L \beta \base 1}{1-\delta} }{L} } - \eps.
	\end{equation}
	Combining~\eqref{eqn : LowerTail/WinCon/estime1},~\eqref{eqn : LowerTail/WinCon/estime2} and~\eqref{eqn : LowerTail/WinCon/estime3} gives~\eqref{eqn : LowerTail/WinCon/Esperance}.

	Fix such values of $\delta$, $L$ and $\ell$. Let $k \in \intint{0}{\floor{ \frac{\delta \ell}{4d} }}^{d-1}\times \acc0$. The inclusion~\eqref{eqn : LowerTail/WinCon/Inclusion} is a consequence of
	\begin{equation}
		\label{eqn : LowerTail/WinCon/Inclusion/MainInequality}
		\MassGDF{0}{\ell \beta \base 1}{\ell} \ge  \sum_{n=0}^{\floor{\frac{(1-\delta)\ell }{L}} -1}\MassGDC{1/2}{2L k + \frac{ n L \beta \base 1}{1-\delta} }{ 2L k + \frac{ (n+1)L\beta \base 1}{1-\delta} }{L},
	\end{equation}
	which is a special case of Lemma~\ref{lem : Concatenation}.
\end{proof}


\section{Concentration inequalities}
\label{sec : Concentration}

In this section we prove Theorem~\ref{thm : BOUNDS}. In Section~\ref{sec : Concentration/General} we establish two concentration inequalities for functions of a Poisson point process which satisfy either the \emph{self-bounding} or the \emph{bounded differences} property, namely  Propositions~\ref{prop : Concentration/General/SB} and~\ref{prop : Concentration/General/BoundedDiff}. They are consequences of already known analogous inequalities for functions of a vector of independent variables, stemming from the so-called \emph{entropy method}. Lemma~\ref{lem : Concentration/General/Discretization} contains our discretization argument. Sections~\ref{subsec : Concentration/Greedy_is_SB} and~\ref{subsec : Concentration/Greedy_is_BD} are devoted to the special case where the function of interest is the mass of greedy animals or paths. Section~\ref{subsec : Concentration/Limit_Bound} concludes the proof of Theorem~\ref{thm : BOUNDS} by taking the limit in the aforementioned bounds.

\subsection{General bounds}
\label{sec : Concentration/General}

\begin{Definition}
	\label{def : Concentration/SB}
	Let $\progen$ be a Poisson process on a locally compact, second countable, Hausdorff topological space $\bbX$, with finite intensity; in particular $\progen$ has values in $\ProSpaceFinite[\bbX]$. Let $f : \ProSpaceFinite[\bbX] \rightarrow \intervallefo0\infty$ be a measurable function. 
	\begin{enumerate}[(i)]\item We say that $f$ is \emph{self-bounding} if for all $\eta \in \ProSpaceFinite[\bbX]$, for all atoms $x$ of $\eta$,
	\begin{equation}
		\label{eqn : Concentration/SB_1}
		0 \le f(\eta) - f(\eta - \Dirac x) \le 1,
	\end{equation}
	and
	\begin{equation}
		\label{eqn : Concentration/SB_2}
		\int_\bbX \p{f(\eta) - f(\eta - \Dirac x) } \eta(\d x) \le f(\eta).
	\end{equation}
	\item We say that $f$ satisfies the \emph{bounded differences} property with the constant $v>0$ if for all $\eta \in \ProSpaceFinite[\bbX]$,
	\begin{equation}
		\int_\bbX \p{f(\eta) - f(\eta - \Dirac x)}^2 \eta(\d x) \le v.
	\end{equation}
\end{enumerate}
\end{Definition}


\begin{Proposition}
	\label{prop : Concentration/General/SB}
	Let $\progen$ and $\bbX$ be as in Definition~\ref{def : Concentration/SB}. Let $f : \ProSpaceFinite[\bbX] \rightarrow \intervallefo0\infty$ be a self-bounding function. Then for all $t>0$,
	\begin{equation}
		\label{eqn : Concentration/General/SB}
		\Pb{f(\progen) \ge \E{f(\progen) } + t } \le \exp\p{ - h\p{\frac{t}{\E{f(\progen) }}}\E{f(\progen) } },
	\end{equation}
	where $h(s) = (1+s)\log(1+s) -s$.
\end{Proposition}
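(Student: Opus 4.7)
The plan is to discretize $\progen$ on a fine measurable partition of $\bbX$, apply the classical Boucheron--Lugosi--Massart self-bounding concentration inequality to the resulting function of finitely many independent variables, and pass to the limit as the partition refines. After a standard reduction, one may assume $\mu$ is diffuse. For each large integer $K$, fix a measurable partition $\bbX = \bigsqcup_{i=1}^K B_i$ with $\max_i \mu(B_i) \to 0$ as $K \to \infty$. Set $X_i \dpe \ind{\progen(B_i) \ge 1}$, a Bernoulli variable, and let $Z_i \in B_i$ be a measurable selection of an atom of $\progen|_{B_i}$ when $X_i = 1$ (and an arbitrary default point otherwise); the pairs $(X_i, Z_i)_{1 \le i \le K}$ are independent because the restrictions $\progen|_{B_i}$ are. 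Let $\progen^{(K)} \dpe \sum_{i=1}^K X_i \Dirac{Z_i}$ and $\Omega_K \dpe \acc{\progen(B_i) \le 1 \text{ for all } i}$; then $\progen^{(K)} = \progen$ on $\Omega_K$, and the crude Poisson estimate $\Pb{Y \ge 2} \le \tfrac12 \lambda^2$ for $Y \sim \Poi(\lambda)$ yields $\Pb{\Omega_K^{\mathrm{c}}} \le \tfrac12 \mu(\bbX)\,\max_i \mu(B_i) \to 0$.

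Next, set $F_K \dpe f(\progen^{(K)})$ and view it as a function of the $K$ independent pairs $(X_i, Z_i)$. Define $F_K^{(i)} \dpe f(\progen^{(K)} - X_i \Dirac{Z_i})$, which depends only on $(X_j, Z_j)_{j \ne i}$. Applying~\eqref{eqn : Concentration/SB_1} and~\eqref{eqn : Concentration/SB_2} pointwise with $\eta = \progen^{(K)}$ yields $0 \le F_K - F_K^{(i)} \le 1$ and $\sum_{i=1}^K (F_K - F_K^{(i)}) \le F_K$, so $F_K$ is self-bounding in the classical Boucheron--Lugosi--Massart sense. The corresponding concentration inequality (e.g.\ Theorem~6.12 in the book \emph{Concentration Inequalities} by Boucheron, Lugosi and Massart) then gives, for every $s > 0$,
\begin{equation*}
\Pb{F_K \ge \E{F_K} + s} \le \exp\p{-\E{F_K} \cdot h\p{s / \E{F_K}}}.
\end{equation*}

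To pass to the limit, iterating~\eqref{eqn : Concentration/SB_1} yields $f(\eta) \le f(\emptyset) + \eta(\bbX)$ for every $\eta$, so both $f(\progen)$ and $f(\progen^{(K)})$ are dominated by the integrable variable $f(\emptyset) + \progen(\bbX)$; dominated convergence combined with $\progen^{(K)} = \progen$ on $\Omega_K$ then gives $\E{F_K} \to \E{f(\progen)}$. On $\Omega_K$ one has $F_K = f(\progen)$, whence
\begin{equation*}
\Pb{f(\progen) \ge \E{F_K} + s} \le \exp\p{-\E{F_K} \cdot h\p{s / \E{F_K}}} + \Pb{\Omega_K^{\mathrm{c}}}.
\end{equation*}
Sending $K \to \infty$ and choosing $s$ as a function of $K$ so that $\E{F_K} + s \to \E{f(\progen)} + t$ yields the announced inequality.

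The main obstacle is the choice of discretization. Replacing atom positions by fixed cell centres would destroy~\eqref{eqn : Concentration/SB_2}, since $f$ genuinely depends on atom locations and the resulting function of $(X_i)$ alone need no longer be self-bounding; selecting an actual atom of $\progen$ as the representative $Z_i$ sidesteps this by ensuring that the discretized process equals $\progen$ on a high-probability event, at the cost only of a vanishing approximation error that can be absorbed into $\Pb{\Omega_K^{\mathrm{c}}}$ and into $\E{F_K} - \E{f(\progen)}$.
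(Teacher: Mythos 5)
Your argument is correct and reaches the paper's conclusion by the same overall strategy --- approximate the Poisson process by finitely many independent pieces carrying at most one atom each, check the discrete self-bounding property, invoke Theorem~6.12 of Boucheron--Lugosi--Massart, and pass to the limit --- but the decomposition itself is genuinely different. The paper's Lemma~\ref{lem : Concentration/General/Discretization} splits $\progen$ into $n$ independent thinned copies of intensity $\rho/n$ by attaching an independent uniform mark in $\intervallefo01$ and slicing that auxiliary coordinate; this works verbatim for an arbitrary (possibly atomic) finite intensity and yields $\progen_n=\progen$ almost surely for all large $n$, hence a clean a.s.\ and $\rL^1$ passage to the limit. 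You instead slice the ground space $\bbX$ into cells of small $\mu$-measure and keep one genuine atom per occupied cell, which forces two extra steps: the reduction to a diffuse intensity (which is legitimate, and is in fact the same uniform-mark lift $\mu\otimes\Leb[1]$ that the paper uses, pushed through the projection, under which the self-bounding property is preserved) and the control of the multi-occupancy event $\Omega_K^{\mathrm c}$, together with the existence of partitions with $\max_i\mu(B_i)\to0$, which holds for a finite nonatomic measure on a Polish space. Your verification that $F_K$ is self-bounding as a function of the independent pairs $(X_i,Z_i)$ is right (the selected atoms lie in disjoint cells, so the sum over $i$ is exactly the integral in~\eqref{eqn : Concentration/SB_2}), and keeping true atoms as representatives is indeed the key point, as you note. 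The only detail worth spelling out in the limit step is that $s_K\dpe t+\E{f(\progen)}-\E{F_K}$ is positive: this follows from the monotonicity of $f$ under removal of atoms, a consequence of~\eqref{eqn : Concentration/SB_1}, which gives $F_K\le f(\progen)$ and hence $\E{F_K}\le\E{f(\progen)}$, after which the bound converges to the stated right-hand side. In short: same proof skeleton, different discretization device; the paper's thinning buys a simpler limit argument with no diffuseness reduction, while yours is a more classical spatial discretization at the cost of the error-event bookkeeping you correctly carried out.
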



\begin{Proposition}
	\label{prop : Concentration/General/BoundedDiff}
	Let $\progen$ and $\bbX$ be as in Definition~\ref{def : Concentration/SB}. Let $f : \ProSpaceFinite[\bbX] \rightarrow \intervallefo0\infty$ be a mesurable, nondecreasing function. Assume that $f$ satisfies the bounded differences property with the constant $v>0$. Then for all $t>0$,
	\begin{equation}
		\label{eqn : Concentration/General/BoundedDiff}
		\Pb{f(\progen) \ge \E{f(\progen) } + t } \le \exp\p{ - \frac{t^2}{v} }.
	\end{equation}
\end{Proposition}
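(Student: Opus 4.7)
The plan is to prove this concentration bound by the same two-step strategy the authors use for Proposition~\ref{prop : Concentration/General/SB}: first reduce the statement to a concentration inequality for a function of a vector of independent random variables via Lemma~\ref{lem : Concentration/General/Discretization}, then apply a known entropy-method bound for functions satisfying the bounded-differences property.

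Concretely, I would partition $\bbX$ into measurable cells $(A_j)_{1\le j \le N}$ of small diameter. Conditionally on the counts, the restrictions of $\progen$ to the $A_j$ are independent Poisson processes; writing $\progen = \sum_j \restriction{\progen}{A_j}$, the random variable $f(\progen)$ becomes a measurable function $g(Y_1,\dots,Y_N)$ of independent variables $Y_j \dpe \restriction{\progen}{A_j}$, with $g$ coordinate-wise nondecreasing (because $f$ is). The assumption $\int\p{f(\eta) - f(\eta - \Dirac x)}^2\eta(\d x)\le v$ translates, once the cells are fine enough to contain at most one atom each with high probability — which is the content of Lemma~\ref{lem : Concentration/General/Discretization} — into a bounded-differences inequality of the form $\sum_j \p{g(Y) - g(Y^{(j)})}_+^2 \le v$, where $Y^{(j)}$ denotes the vector with $Y_j$ replaced by an independent copy. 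Monotonicity of $f$ is essential here: it guarantees that removing an atom and replacing it by an independent one produce comparable fluctuations, and that the positive part coincides with the actual increment.

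Once this reduction is achieved, I would invoke the entropy-method bound of Boucheron, Lugosi and Massart for nondecreasing functions of independent random variables: under the hypothesis $\sum_j \p{g - g^{(j)}}_+^2 \le v$ one obtains $\Pb{g(Y) \ge \E{g(Y)} + t} \le \exp\p{-t^2/v}$ via a modified logarithmic Sobolev inequality followed by Herbst's argument. Passing to the limit along a sequence of ever finer partitions — again by Lemma~\ref{lem : Concentration/General/Discretization}, which also handles the convergence of expectations — transfers the inequality back to $f(\progen)$, yielding~\eqref{eqn : Concentration/General/BoundedDiff}.

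The main obstacle I foresee is the matching of the two forms of the bounded-differences property: the Poisson-process condition is phrased in terms of \emph{removal} of atoms, whereas the discrete statement is phrased in terms of \emph{replacement} by an independent copy. Reconciling these cleanly requires the monotonicity assumption in an essential way, and a careful discretization that preserves the constant $v$ exactly (rather than up to a multiplicative factor), so that the sharp exponent $t^2/v$ is obtained in the limit and not merely $t^2/(cv)$ for some $c>1$. This also explains why monotonicity is needed here but not in Proposition~\ref{prop : Concentration/General/SB}.
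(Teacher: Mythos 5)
Your overall strategy --- reduce to a function of finitely many independent coordinates, apply a Boucheron--Lugosi--Massart entropy-method bound, then pass to the limit --- is the paper's strategy: its proof notes $0\le f(\progen)\le\sqrt v\,\progen(\bbX)$ for integrability and then repeats the argument of Proposition~\ref{prop : Concentration/General/SB}, with Theorem~6.7 of \cite{BLM} in place of Theorem~6.12. But your discretization step has a genuine gap. You partition $\bbX$ into cells $A_j$ and take $Y_j\dpe\restriction{\progen}{A_j}$ as the independent coordinates, asking only that each cell contain at most one atom ``with high probability''. High probability is not enough: the hypothesis of the discrete concentration theorem must hold almost surely, and on the event that some cell carries $k\ge2$ atoms, emptying (or resampling) that whole cell changes $f$ by the effect of deleting $k$ atoms at once; the assumption $\int\p{f(\eta)-f(\eta-\Dirac x)}^2\eta(\d x)\le v$ controls only single-atom deletions from $\eta$, so it neither bounds such a jump by $v$ nor even gives a finite per-coordinate oscillation, since a cell may contain arbitrarily many atoms. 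For any fixed finite partition this event has positive probability, so the constant $v$ is simply not available for $g(Y_1,\dots,Y_N)$. Note also that with a spatial partition there is nothing to pass to the limit --- $g(Y_1,\dots,Y_N)=f(\progen)$ exactly --- so the limiting step you invoke cannot repair the failed hypothesis.

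This is exactly what Lemma~\ref{lem : Concentration/General/Discretization} is designed to circumvent, and it is not the spatial partition you attribute to it: it thins $\progen$, via an auxiliary uniform coordinate, into $n$ \emph{independent} processes $\progen_n^{(i)}\in\DiracSpace[\bbX]$, each empty or a single Dirac mass, with $\progen_n=\sum_{i=1}^n\progen_n^{(i)}\le\progen$ and equality for $n$ large. Because every coordinate carries at most one atom, one may take $Z\dpe f(\progen_n)$ and $Z_i\dpe f\p{\progen_n-\progen_n^{(i)}}$; monotonicity gives $Z_i\le Z$ (this, rather than a replacement-by-independent-copy condition, is the clean resolution of the removal-versus-replacement issue you raise), and $\sum_{i=1}^n\p{Z-Z_i}^2\le\int\p{f(\eta)-f(\eta-\Dirac x)}^2\eta(\d x)\le v$ holds surely with the same constant $v$, so Theorem~6.7 of \cite{BLM} applies. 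The a.s.\ and $\rL^1$ convergence $f(\progen_n)\to f(\progen)$ from the last two items of the lemma then yields~\eqref{eqn : Concentration/General/BoundedDiff}. With your cell construction replaced by the lemma as actually stated, the rest of your plan is the paper's proof.
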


\begin{Lemma}
	\label{lem : Concentration/General/Discretization}
	Let $\progen$ and $\bbX$ be as in Definition~\ref{def : Concentration/SB}. Then there exists a family of point processes $\p{\progen_n^{(i)}}_{\substack{n\ge 1 \\ 1\le i \le n} }$ in $\DiracSpace[\bbX]$, with the following properties:
	\begin{enumerate}[(i)]
		\item For all $n\ge 1$, the variables $\p{ \progen_n^{(i)} }_{1\le i \le n}$ are independent.
		\item For all $n \ge 1$, almost surely,
			\begin{equation}
				\label{eqn : Concentration/General/Discretization/UB}
				\progen_n \dpe \sum_{i=1}^{n} \progen_n^{(i)} \le \progen.
			\end{equation}
		\item Almost surely, for large enough $n$,~\eqref{eqn : Concentration/General/Discretization/UB} is an equality.
	\end{enumerate}
\end{Lemma}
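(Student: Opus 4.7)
The plan is to realise each $\progen_n^{(i)}$ as a measurable function of the restriction of $\progen$ to one piece $A_n^{(i)}$ of a finite Borel partition of $\bbX$ whose diameters-on-compacts shrink to zero. Independence will follow from the defining property of Poisson processes on disjoint sets, and property (iii) from the fact that a simple locally finite measure with finitely many atoms is eventually resolved by a separating sequence of partitions.

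First I would recall that a locally compact, second countable, Hausdorff space is metrisable (Urysohn) and $\sigma$-compact; I fix a compatible metric $d$ and an exhaustion $\bbX=\bigcup_{j\ge 1}K_j$ by compacts. For each $n\ge 1$ I then construct a Borel partition $\p{A_n^{(i)}}_{1\le i \le n}$ of $\bbX$ (padding with empty pieces if necessary) having the property that for every pair of distinct points $x,y\in \bbX$ there exists $n_0$ for which $x$ and $y$ lie in different pieces at every level $n\ge n_0$. Concretely, for large $n$ I refine a finite cover of $K_n$ by balls of radius $1/n$ into a Borel partition of $K_n$, append $\bbX\setminus K_n$ as one additional piece, and ensure the total number of pieces is $\le n$ by adjusting the mesh (which can always be kept tending to zero, since the covering number of each fixed $K_j$ is finite).

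Then I define $\progen_n^{(i)} \dpe \Dirac{y\p{\restriction{\progen}{A_n^{(i)}}}}$ when $\progen\p{A_n^{(i)}}\ge 1$, where $\eta\mapsto y(\eta)$ is a measurable choice of an atom of $\eta$ obtained from the measurable enumeration recalled in Section~\ref{subsec : Intro/Formal} (Proposition~1.6.11 in \cite{Bac20}), and $\progen_n^{(i)}\dpe 0$ otherwise. By construction $\progen_n^{(i)}$ takes values in $\DiracSpace[\bbX]$. Since the sets $A_n^{(i)}$ are pairwise disjoint, the restrictions $\restriction{\progen}{A_n^{(i)}}$ are independent, proving (i); and the pointwise inequality $\progen_n^{(i)} \le \restriction{\progen}{A_n^{(i)}}$ combined with disjointness gives (ii).

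For (iii), note that the intensity of $\progen$ is finite and (in the applications of this lemma, where $\progen$ is a truncation of $\pro$) non-atomic, so $\progen$ is almost surely simple and has finitely many atoms $x_1,\dots,x_N$. There then exists $n_0$ such that $\{x_1,\dots,x_N\}\subseteq K_n$ and $1/n < \min_{k\ne l} d(x_k,x_l)$ for all $n\ge n_0$; by the separation property of the partition, each piece $A_n^{(i)}$ contains at most one $x_k$, so $\progen_n^{(i)} = \restriction{\progen}{A_n^{(i)}}$ for every $i$, and summing yields $\progen_n = \progen$. I expect the only mildly delicate point to be the measurable selection of a Dirac in each piece, which is handled by the measurable enumeration invoked above.
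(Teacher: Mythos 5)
Your construction is genuinely different from the paper's. The paper splits $\progen$ along an auxiliary uniform coordinate: it takes a Poisson process $\progen^*$ on $\bbX \times [0,1)$ with intensity $\rho \otimes \Leb[1]$ (equivalently, it attaches i.i.d.\ uniform marks to the points of $\progen$), builds $\progen_n^{(i)}$ from the slice $\bbX \times [\tfrac{i-1}{n}, \tfrac{i}{n})$, and uses that almost surely all marks are distinct, so that for $n$ large each slice carries at most one point. You instead split along a deterministic spatial partition $\p{A_n^{(i)}}_i$ of $\bbX$ with mesh shrinking on compacts. Your items (i) and (ii) are sound: independence of the restrictions of a Poisson process to disjoint Borel sets, the pointwise bound $\progen_n^{(i)} \le \restriction{\progen}{A_n^{(i)}}$, and the measurable selection of an atom (the same map $\Phi$ the paper uses) give exactly what is needed; your version even has the small advantage that the $\progen_n^{(i)}$ are functions of $\progen$ itself, so no re-coupling or equality-in-distribution step is required.

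There is, however, a gap in (iii). The lemma is stated for any Poisson process with finite intensity as in Definition~\ref{def : Concentration/SB}; nothing there forces the intensity $\rho$ to be non-atomic. If $\rho(\acc{x_0})>0$ for some $x_0\in\bbX$, then with positive probability $\progen(\acc{x_0})\ge 2$, and no spatial partition, however fine, can separate two atoms sitting at the same location: each $\progen_n^{(i)}$ retains at most one of them, so $\progen_n\neq\progen$ for every $n$ on that event and (iii) fails. You flag this yourself by invoking non-atomicity \emph{in the applications of the lemma}, but that is an extra hypothesis, not one granted by the statement; as written your argument proves a weaker lemma (valid when $\progen$ is simple), which does happen to cover the paper's uses, since there the intensities are absolutely continuous in the spatial coordinate. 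The paper's slicing avoids the issue entirely because $\rho\otimes\Leb[1]$ is non-atomic whatever $\rho$ is: coincident points of $\progen$ receive distinct uniform marks and are eventually separated in the mark coordinate. To get the stated generality you would either have to add the simplicity assumption to the lemma or perform precisely this kind of auxiliary randomization. (A minor further point: your separation property holds only inside the compacts $K_n$, the unbounded leftover piece separating nothing; your use of it in (iii) is nonetheless correct since the finitely many atoms eventually lie in $K_n$.)
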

\begin{proof}
	Let $\rho$ be the intensity measure of $\progen$ and $n \in \N^*$. Let $\progen^*$ be a Poisson point process on $\bbX \times \intervallefo01T$, with intensity measure $\rho \otimes \Leb[1]$. For all $i\in \intint1{n}$, we define the measure $\progen_{n}^{(i)}$ on $\bbX$ by
	\begin{equation*}
		\progen_n^{(i)}(B) \dpe \progen^*\p{B \times \intervallefo{\frac{i-1}{n} }{\frac{i}{n} } }, \text{ for all Borel subset }B\subseteq \R^d.
	\end{equation*}
	By standard arguments (see e.g.\ Theorems 5.1 and 5.2 in \cite{LastPenrose}) the $\progen_n^{(i)}$, for $i\in\intint1n$, are independent and they are Poisson point processes with intensity measure $\rho/n$. Moreover, by the so-called \emph{superposition principle} (see e.g.\ Theorem 3.3 in \cite{LastPenrose}), their sum has the same distribution as $\progen$. By~\eqref{eqn : Intro/Sum_of_Diracs_General}, there exists a measurable map $\Phi : \ProSpaceFinite[\bbX] \rightarrow \DiracSpace[\bbX]$, such that for all $\eta \in \ProSpaceFinite[\bbX]$:
	\begin{enumerate}[(i)]
		\item If $\eta = 0$ then $\Phi(\eta) = 0$.
		\item If $\eta \neq 0$, then $\Phi(\eta) = \Dirac x$, where $x$ is an atom of $\eta$.
	\end{enumerate}
	Almost surely, for all pair of atoms $(x,t)$ and $(x', t')$ of $\progen^*$, $t\neq t'$, therefore almost surely, for large enough $n$, $\progen_n = \progen$. 
\end{proof}

\begin{proof}[Proof of Proposition~\ref{prop : Concentration/General/SB}]
	First note that $0 \le f(\progen) \le \progen(\bbX)$, therefore $f(\progen)$ is integrable. Let $n\ge 1$. Consider the function
	\begin{align}
		f_n : \DiracSpace[\bbX]^{n} &\longrightarrow \intervallefo0\infty \eol
			\p{\eta_i}_{1\le i \le n} &\longmapsto f\p{ \sum_{i=1}^{n} \eta_i}. \label{eqn : Concentration/General/SB/Def_fn}
	\end{align}
	Then $f_n$ satisfies the self-bounding property in the discrete sense, as defined in \cite{BLM}, Section 3.3. Consequently, by Theorem 6.12 in \cite{BLM}, for all $t>0$,
	\begin{equation*}
		\begin{split}
		&\Pb{ f_n\p{\progen_n^{(1)}, \dots, \progen_n^{(n)} } \ge \E{f_n\p{\progen_n^{(1)}, \dots, \progen_n^{(n)} }  } + t }%
			\\ &\quad \le \exp\p{ -h\p{\frac{t}{\E{f_n\p{\progen_n^{(1)}, \dots, \progen_n^{(n)} }}} }\E{f_n\p{\progen_n^{(1)}, \dots, \progen_n^{(n)} }  } }.
		\end{split}
	\end{equation*}
	In other words,
	\begin{equation}
		\Pb{ f\p{\progen_n} \ge \E{f\p{\progen_n}  } + t }%
			\le \exp\p{ -h\p{\frac{t}{\E{f\p{\progen_n}}} }\E{f\p{\progen_n}  } }.
	\end{equation} 
	Moreover, the last two items in Lemma~\ref{lem : Concentration/General/Discretization} yield the convergence
	\begin{equation}
		f(\progen_n) \xrightarrow[n\to\infty]{\text{a.s.\ and }\rL^1} f(\progen),
	\end{equation}
	thus~\eqref{eqn : Concentration/General/SB}.
\end{proof}
\begin{proof}[Proof of Proposition~\ref{prop : Concentration/General/BoundedDiff}]
	First note that $0 \le f(\progen) \le \sqrt v \progen(\bbX)$, therefore $f(\progen)$ is integrable. The rest of the proof is like Proposition~\ref{prop : Concentration/General/SB}, based on Theorem 6.7 in \cite{BLM} instead of Theorem 6.12 there.
\end{proof}

\subsection{Concentration bounds for unpenalized greedy animals and paths}
\label{subsec : Concentration/Greedy_is_SB}
The main result of this section, Proposition~\ref{prop : Concentration/Unpen}, gives an upper-tail concentration bound for the mass of the (unpenalized) mass of greedy paths and animals. The letter $\mathrm G$ denotes either $\mathrm A$ or $\mathrm P$.
\begin{Proposition}
	\label{prop : Concentration/Unpen}
	Assume that $\nu$ is supported by $\intervalleof0m$, with $0<m<\infty$. Let $\ell, t \in \intervalleoo0\infty$. Then
	\begin{equation}
		\label{eqn : CONCENTRATION/Undirected_Unpen}
		\Pb{\MassGUF \ell \ge \E{\MassGUF\ell} + t } \le \exp\p{ -h\p{\frac{t}{\E{\MassGUF\ell} }} \frac{\E{\MassGUF\ell} }{m}  },
	\end{equation}
	where $h(s) = (1+s)\log(1+s) - s$. Moreover, for all $\beta \in \intervallefo01$,
	\begin{equation}
		\label{eqn : CONCENTRATION/Directed_Unpen}
		\begin{split}
		&\Pb{\MassGDF0{\ell \beta \base 1}\ell \ge \E{\MassGDF0{\ell \beta \base 1}\ell} + t } \\ &\quad\le \exp\p{ -h\p{\frac{t}{\E{\MassGDF0{\ell \beta \base 1}\ell} }} \frac{\E{\MassGDF0{\ell \beta \base 1}\ell} }{m}  }.
		\end{split}
	\end{equation}
\end{Proposition}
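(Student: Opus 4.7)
The plan is to apply Proposition~\ref{prop : Concentration/General/SB} to the rescaled greedy mass $\MassGUF\ell/m$, viewed as a self-bounding function of the relevant restriction of $\pro$. Dividing by $m$ is precisely what makes any single atom's contribution to an increment at most $1$, as required by the self-bounding hypothesis.

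First we localize. Any $\xi\in\SetAUF\ell$ or $\SetPUF\ell$ is contained in $\clball{0,\ell}$, since every vertex of a connected graph of total edge-length at most $\ell$ containing $0$ lies within distance $\ell$ of $0$; the same holds for $\SetGDF0{\ell\beta\base1}\ell$. Thus $\MassGUF\ell$ and $\MassGDF0{\ell\beta\base1}\ell$ depend only on $\restriction\pro{\clball{0,\ell}\times\intervalleof0m}$. If $\nu$ happens to be infinite on $\intervalleof0m$ we truncate: for $\eps>0$ set $\pro^{(\eps)}\dpe\restriction\pro{\R^d\times\intervalleff\eps m}$, which has finite intensity on $\clball{0,\ell}$. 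Monotonicity of the greedy mass in the underlying process together with the fact that any feasible animal uses only finitely many atoms yields $\MassGUF\ell\cro{\pro^{(\eps)}}\uparrow\MassGUF\ell$ a.s.\ as $\eps\downarrow 0$, and monotone convergence gives $\E{\MassGUF\ell\cro{\pro^{(\eps)}}}\uparrow\E{\MassGUF\ell}<\infty$, the finiteness coming from~\eqref{eqn : Intro/LLN_Animal/GeneralUB}.

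Next, on this finite-intensity process we verify that $f(\eta)\dpe\MassGUF\ell\cro\eta/m$ is self-bounding in the sense of Definition~\ref{def : Concentration/SB}. For an atom $(x,t)$ of $\eta$, deleting it decreases the maximum mass by at most $t$, whence $0\le f(\eta)-f(\eta-\Dirac{(x,t)})\le t/m\le 1$, giving~\eqref{eqn : Concentration/SB_1}. For~\eqref{eqn : Concentration/SB_2}, after restricting candidate vertex sets to $\Supp\eta\cup\acc0$ the feasible family is finite, so a maximizer $\xi^*$ exists; atoms outside $\xi^*$ leave both the feasibility and the value of $\xi^*$ unchanged, hence contribute $0$ to the increment, while atoms inside $\xi^*$ contribute at most $t/m$. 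Summing,
\begin{equation*}
\int\p{f(\eta)-f(\eta-\Dirac{(x,t)})}\eta(\d x,\d t)\le\sum_{(x,t)\in\xi^*}\frac{t}{m}=\frac{\Mass{\xi^*}}{m}=f(\eta).
\end{equation*}
Proposition~\ref{prop : Concentration/General/SB} then yields the announced bound with $\MassGUF\ell$ replaced by $\MassGUF\ell\cro{\pro^{(\eps)}}$.

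To lift the truncation, set $a\dpe\E{\MassGUF\ell}$ and $a_\eps\dpe\E{\MassGUF\ell\cro{\pro^{(\eps)}}}$, and fix $\delta>0$. By a.s.\ convergence, the event $\acc{\MassGUF\ell\ge a+t}$ is contained in $\liminf_{\eps\to 0}\acc{\MassGUF\ell\cro{\pro^{(\eps)}}\ge a_\eps+(t-\delta+a-a_\eps)}$, so Fatou's lemma combined with the $\eps$-bound gives
\begin{equation*}
\Pb{\MassGUF\ell\ge a+t}\le\liminf_{\eps\to 0}\exp\p{-h\p{(t-\delta+a-a_\eps)/a_\eps}a_\eps/m}=\exp\p{-h\p{(t-\delta)/a}a/m}
\end{equation*}
by continuity of $(u,s)\mapsto s\,h(u/s)$; letting $\delta\to 0$ yields~\eqref{eqn : CONCENTRATION/Undirected_Unpen}. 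The directed bound~\eqref{eqn : CONCENTRATION/Directed_Unpen} follows identically upon replacing $\SetGUF\ell$ with $\SetGDF0{\ell\beta\base1}\ell$ and imposing $\ell\beta\base1$ as an additional required vertex. The main delicacy is the self-bounding verification, which hinges on existence of the maximizer $\xi^*$ and on the crucial fact that perturbing $\eta$ away from $\xi^*$ leaves the optimum unchanged; the truncation limit is essentially routine once $\E{\MassGUF\ell}<\infty$ is at hand.
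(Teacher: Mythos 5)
Your proposal follows the paper's route: localize and truncate to a finite-intensity process, check that $\frac1m\MassGUF\ell$ is self-bounding by exhibiting a maximizer, apply Proposition~\ref{prop : Concentration/General/SB}, and remove the truncation at the end; your explicit $\eps$-truncation with the Fatou/continuity step is just a spelled-out version of the ``standard truncation argument'' the paper invokes, and the directed case is handled the same way in both.

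One step needs repair, however. To produce the maximizer $\xi^*$ you restrict candidate vertex sets to $\Supp\eta\cup\acc0$ and argue the feasible family is then finite. As the paper points out right after Definition~\ref{def : Intro/Formal/ContinuousAnimal}, this restriction is legitimate for paths but \emph{not} for animals: massless Steiner-type vertices can shorten the graph needed to connect a given set of atoms, so the supremum over animals with vertices in $\Supp\eta\cup\acc0$ may be strictly smaller than $\MassAUF\ell\cro\eta$. If $\xi^*$ is only a maximizer of that restricted problem, the identity $\Mass{\xi^*}\cro\eta/m=f(\eta)$ at the end of your self-bounding computation fails, and with it the claim that atoms off $\xi^*$ contribute zero increment. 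The fix is immediate and is what the paper does: since $\eta$ is finite, $\Mass\xi\cro\eta$ takes only finitely many values as $\xi$ ranges over $\SetGUF\ell$ (it depends only on which atoms of $\eta$ the vertex set covers), so the unrestricted supremum is attained by some $\xi^*\in\SetGUF\ell$, possibly containing additional massless vertices. Your subsequent argument uses nothing about $\xi^*$ beyond the fact that it attains $\MassGUF\ell\cro\eta$, so with this corrected justification it goes through verbatim.
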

\begin{proof}
	We only show the first part since the second one is similar. By a standard truncation argument, we may assume that $\nu\p{\intervalleof0m} < \infty$.	Moreover, by Proposition~\ref{prop : Concentration/General/SB}, it is sufficient to check that \begin{align*}
		\ProSpaceFinite[\R^d \times \intervalleof0m] &\longrightarrow \intervallefo0\infty \\
		\eta &\longmapsto \frac1m\MassGUF \ell[\eta]
	\end{align*}
	is self-bounding. Let $\eta \in \ProSpaceFinite[\R^d \times \intervalleof0m]$. Since $\eta$ is finite, there exists $\xi \in \SetGUF \ell$ such that 
	\begin{equation*}
		\MassGUF \ell [\eta] = \Mass\xi[\eta].
	\end{equation*}
	Let $x=(z,s)\in \R^d \times \intervalleof0m$ be an atom of $\eta$. It is clear that
	\begin{equation}
		\label{eqn : Concentration/Greedy_is_SB/Unpen/bound1}
		\MassGUF \ell [\eta - \Dirac x] \le \MassGUF\ell [\eta].
	\end{equation}
	Moreover,
	\begin{equation*}
		\Mass{\xi}[\eta - \Dirac x] = \Mass{\xi}[\eta] - s \ind{z \in \xi} =  \MassGUF \ell [\eta] -s\ind{z \in \xi},
	\end{equation*}
	thus
	\begin{align}
		\MassGUF \ell [\eta]- \MassGUF \ell [\eta-\Dirac x] &\le s \ind{z \in \xi}, \nonumber
		\intertext{therefore}
		\label{eqn : Concentration/Greedy_is_SB/Unpen/bound2}
		\int_{\R^d \times \intervalleof0m} \p{ \MassGUF \ell [\eta]- \MassGUF \ell [\eta-\Dirac x] }\eta(\d x) &\le \Mass{\xi}[\eta] = \MassGUF \ell [\eta],
	\end{align}
	which concludes the proof.
\end{proof}

\subsection{Concentration bound for penalized animals}
\label{subsec : Concentration/Greedy_is_BD}
The main result of this section, Proposition~\ref{prop : Concentration/Pen}, gives an upper-tail concentration bound for the mass the penalized mass of greedy animals. We fix $0<q<\infty$ and assume that $\nu$ is finite. Otherwise, the question is already answered by Proposition~\ref{prop : Concentration/Unpen}, as for all $\ell > 0$ and $u \in \ball{0,1}$, the variables $\MassAUF\ell$ and $\MassADF0{\ell u}\ell$ are a.s.\ equal to their penalized counterparts.

Let $\proDirac$ denote the image of $\pro$ through the map
\begin{align*}
	\R^d \times \intervalleoo0\infty &\longrightarrow \R^d \times \intervalleoo0\infty \\
	(z, s) &\longmapsto (z,1).
\end{align*}
By the mapping theorem (see e.g.\ Theorem 5.1 in \cite{LastPenrose}), $\proDirac$ is a Poisson point process with intensity $\nu\p{\intervalleoo0\infty} \Leb\otimes \Dirac 1$. For all $\ell>0$, consider
\begin{equation}
	\MassN\ell \dpe \MassAUF \ell \cro{\proDirac }.
\end{equation}

\begin{Proposition}
	\label{prop : Concentration/Pen}
	Assume that $\nu$ is supported by $\intervalleof0m$, with $0<m<\infty$, and is finite. Let $\ell, t , \alpha \in \intervalleoo0\infty$.
	Then
	\begin{equation}
		\label{eqn : CONCENTRATION/Undirected_Pen}
		\Pb{\MassAUFpen \ell q\ge \E{\MassAUFpen\ell q} + t } \le \exp\p{ - h\p{ \alpha } \E{ \MassN\ell } } +  \exp\p{ \frac{- t^2 }{ (1+\alpha)\E{ \MassN\ell } (q+m)^2 } },
	\end{equation}
	where $h(s) = (1+s)\log(1+s) - s$. Moreover, for all $u\in \clball{0,1}$,
	\begin{equation}
		\label{eqn : CONCENTRATION/Directed_Pen}
		\begin{split}
		&\Pb{\MassADFpen {0}{\ell u}\ell q \ge \E{\MassADFpen {0}{\ell u}\ell q} + t }  \\ &\quad \le \exp\p{ - h\p{ \alpha } \E{ \MassN\ell } } +  \exp\p{ \frac{- t^2 }{ (1+\alpha)\E{ \MassN\ell } (q+m)^2 } }.
		\end{split}
	\end{equation}
\end{Proposition}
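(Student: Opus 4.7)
The plan is to combine a self-bounding tail estimate for the number of Poissonian atoms harvested by the greedy animal, $\MassN\ell$, with a conditional bounded-differences estimate for $\MassAUFpen\ell q$ on the event where $\MassN\ell$ does not overshoot its expectation; a union bound over this dichotomy will produce the two exponentials in~\eqref{eqn : CONCENTRATION/Undirected_Pen}. By the mapping theorem, $\proDirac$ is a Poisson point process on $\R^d \times \intervalleoo0\infty$ with finite intensity $\nu(\intervalleoo0\infty) \Leb \otimes \Dirac 1$ and marks all equal to $1$. Applying Proposition~\ref{prop : Concentration/Unpen} with $m=1$ to $\proDirac$ (and deviation $t = \alpha \E{\MassN\ell}$) yields
\begin{equation*}
\Pb{\MassN\ell \ge (1+\alpha)\E{\MassN\ell}} \le \exp\p{-h(\alpha)\E{\MassN\ell}},
\end{equation*}
which is the first summand on the right-hand side of~\eqref{eqn : CONCENTRATION/Undirected_Pen}.

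For the second ingredient, fix $\eta \in \ProSpaceFinite$ and a maximiser $\xi^\star$ of $\MassAUFpen\ell q\cro{\eta}$. A case analysis on whether the atom $x=(z,s)$ lies on $\xi^\star$ and on whether $z$ remains in $\Supp\eta$ after removal of $x$ shows that $0\le \MassAUFpen\ell q\cro{\eta} - \MassAUFpen\ell q\cro{\eta-\Dirac x} \le m+q$, and that this increment vanishes when $z\notin \xi^\star$. Summing squares over the atoms of $\eta$ gives
\begin{equation*}
\int \p{\MassAUFpen\ell q\cro{\eta}-\MassAUFpen\ell q\cro{\eta-\Dirac x}}^2 \eta(\d x) \le (m+q)^2\,\#\p{\xi^\star \cap \Supp\eta} \le (m+q)^2\,\MassN\ell\cro{\eta},
\end{equation*}
so that on the good event $\acc{\MassN\ell \le (1+\alpha)\E{\MassN\ell}}$ this quantity is deterministically bounded by $v \dpe (1+\alpha)\E{\MassN\ell}(m+q)^2$. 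To translate this \emph{conditional} bounded-differences property into a genuine concentration estimate, one introduces an auxiliary function $f^\star : \ProSpaceFinite \to \intervallefo0\infty$ that agrees with $\MassAUFpen\ell q$ on the good event and fulfils the hypotheses of Proposition~\ref{prop : Concentration/General/BoundedDiff} globally with constant $v$. Since $\E{f^\star} \le \E{\MassAUFpen\ell q}$, Proposition~\ref{prop : Concentration/General/BoundedDiff} then delivers $\Pb{f^\star \ge \E{f^\star}+t} \le \exp\p{-t^2/v}$, and a union bound with Step~1 yields~\eqref{eqn : CONCENTRATION/Undirected_Pen}. The directed bound~\eqref{eqn : CONCENTRATION/Directed_Pen} is obtained by the same argument, replacing $\SetAUFalt\ell$ by $\SetADFalt 0{\ell u}\ell$ and $\MassAUFpen\ell q$ by $\MassADFpen 0{\ell u}\ell q$ throughout.

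The main obstacle is the construction of $f^\star$: the most natural candidate,
\begin{equation*}
\eta \mapsto \sup\set{\Masspen[q]{\xi}\cro{\eta}}{\xi \in \SetAUFalt\ell,\ \#\p{\xi\cap \Supp\eta} \le \ceil{(1+\alpha)\E{\MassN\ell}} },
\end{equation*}
is \emph{not} nondecreasing in $\eta$, because inserting a Poissonian atom at a point $z \notin \Supp\eta$ previously used by some competing animal as a \emph{free} bridge (paying the penalty $q$) may push that animal's support-atom count above the budget and disqualify it. One circumvents this either by softly penalising violations of the budget or by discretising $\pro$ via Lemma~\ref{lem : Concentration/General/Discretization} and invoking a Warnke-type \emph{typical bounded-differences} inequality for independent variables; either route must preserve both the monotonicity required by Proposition~\ref{prop : Concentration/General/BoundedDiff} and the sharp variance constant $(1+\alpha)\E{\MassN\ell}(m+q)^2$ appearing in~\eqref{eqn : CONCENTRATION/Undirected_Pen}.
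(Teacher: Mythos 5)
Your overall architecture --- split on whether $\MassN\ell$ exceeds $(1+\alpha)\E{\MassN\ell}$, control that event by applying Proposition~\ref{prop : Concentration/Unpen} to the Dirac process $\proDirac$, control the complement by a bounded-differences bound with variance constant $(1+\alpha)\E{\MassN\ell}(q+m)^2$, and finish with a union bound --- is exactly the paper's, and your diagnosis that the naive truncation $\eta\mapsto\sup\set{\Masspen\xi\cro\eta}{\xi\in\SetAUFalt\ell,\ \#\p{\xi\cap\Supp\eta}\le(1+\alpha)\E{\MassN\ell}}$ fails to be nondecreasing is correct. But the step you yourself call ``the main obstacle'', namely the construction of a globally monotone $f^\star$ with the right variance constant, is precisely where the proof lives, and you leave it unresolved: you only gesture at ``soft penalisation'' or a Warnke-type typical-bounded-differences inequality, carry out neither, and the latter is not among the tools established for the Poisson setting (Proposition~\ref{prop : Concentration/General/BoundedDiff} moreover requires $f$ nondecreasing). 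You also assert $\E{f^\star}\le\E{\MassAUFpen\ell q}$ for the unspecified $f^\star$; this is not automatic for a function merely agreeing with $\MassAUFpen\ell q$ on the good event, so it is an extra property the construction must deliver. As written, the proposal is a correct plan with the decisive step missing.

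The paper's resolution is short: set
\begin{equation*}
	\hatMassAUFpen\ell q \dpe \max \set{ \Masspen A - q\#\p{\xi\setminus A} }{ \xi\in\SetAUFalt\ell,\ A\subseteq\xi,\ \#A\le(1+\alpha)\E{\MassN\ell} },
\end{equation*}
i.e.\ the cardinality budget is imposed on a deterministic subset $A$ of the vertex set of $\xi$, not on $\xi\cap\Supp\eta$. Adding an atom never disqualifies a candidate pair $(\xi,A)$ and can only increase its value, so this function is nondecreasing; since $\Masspen\xi\ge\Masspen A-q\#\p{\xi\setminus A}$ it is pointwise below $\MassAUFpen\ell q$, which gives the expectation comparison for free; and on $\acc{\MassN\ell<(1+\alpha)\E{\MassN\ell}}$ the choice $A=\xi\cap\projpro$ for an optimal $\xi$ shows it equals $\MassAUFpen\ell q$. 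Removing an atom $(z,s)$ decreases the optimal candidate's value by at most $(q+m)\ind{z\in A}$, so the squared increments integrate to at most $(1+\alpha)\E{\MassN\ell}(q+m)^2$, and Proposition~\ref{prop : Concentration/General/BoundedDiff} plus the union bound you describe completes the proof, identically in the directed case.
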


\begin{proof}[Proof of Proposition~\ref{prop : Concentration/Pen}]
	Let us fix $\ell, t , \alpha \in \intervalleoo0\infty$. We only prove~\eqref{eqn : CONCENTRATION/Undirected_Pen}, the other part being similar. We define
	\begin{equation}
		\hatMassAUFpen\ell q \dpe \max \set{  \Masspen A  - q\#\p{\xi \setminus A}  }{ \begin{array}{c} \xi \in \SetAUFalt\ell \\ A\subseteq \xi \\ \# A \le (1+\alpha) \E{\MassN \ell}   \end{array} }.
	\end{equation}
	Proposition~\ref{prop : Concentration/Unpen} implies that
	\begin{equation}
		\label{eqn : Concentration/Pen/Dirac}
		\Pb{\MassN\ell  \ge (1+\alpha)\E{ \MassN\ell }  } \le \exp\p{ - h\p{\alpha } \E{ \MassN\ell } }.
	\end{equation}
	We claim that
	\begin{equation}
		\label{eqn : Concentration/Pen/2ndterm}
		\Pb{ \hatMassAUFpen\ell q \ge \E{\hatMassAUFpen\ell q} + t } \le \exp\p{ - \frac{- t^2 }{ (1+\alpha)\E{ \MassN\ell } (q+m)^2 } }.
	\end{equation}
	Indeed let us consider 
	\begin{align*}
		f : \ProSpaceFinite[\R^d \times \intervalleof0m ] &\longrightarrow \intervallefo0\infty\\
		\eta &\longmapsto  \hatMassAUFpen\ell q\cro\eta.
	\end{align*}
	By Proposition~\ref{prop : Concentration/General/BoundedDiff}, it is sufficient to show that for all $\eta \in \ProSpaceFinite[\R^d \times \intervalleof0m ]$,
	\begin{equation}
		\label{eqn : Concentration/Pen/Hat/Hypothese}
		\int_{\R^d \times \intervalleof0m} \p{f(\eta) - f(\eta - \Dirac{(x,s)})}^2 \eta(\d x, \d s) \le (1+\alpha) \E{ \MassN\ell } (q+m)^2 .
	\end{equation}
	Fix $\eta \in \ProSpaceFinite[\R^d \times \intervalleof0m ]$. There exists $\xi \in \SetAUFalt\ell$ and a set of points $A\subseteq \xi$ such that $\# A \le (1+\alpha) \E{\MassN \ell}$ and
	\begin{equation*}
		f(\eta) = \Masspen A\cro{\eta} - q\#\p{\xi \setminus A}. 
	\end{equation*}
	Let $(x,s)$ be an atom of $\eta$. If $x \notin A$, then \[\Masspen \xi\cro\eta = \Masspen \xi\cro{\eta - \Dirac{(x,s)} }.\]If $x\in A$, then \[ \Masspen \xi\cro\eta - \Masspen \xi\cro{\eta - \Dirac{(x,s)} } \le q+m. \] Consequently,
	\[ 0\le f(\eta) - f(\eta - \Dirac{(x,s)}) \le (q+m) \ind{x\in A}, \]
	thus~\eqref{eqn : Concentration/Pen/Hat/Hypothese}, therefore~\eqref{eqn : Concentration/Pen/2ndterm}.

	The inclusion
	\begin{equation*}
		\acc{\MassN\ell  < (1+\alpha)\E{ \MassN\ell }  } \subseteq \acc{ \hatMassAUFpen\ell q = \MassAUFpen\ell q }
	\end{equation*}
	implies that
	\begin{align*}
		\acc{\MassAUFpen \ell q\ge \E{\MassAUFpen\ell q} + t }
			&\quad \subseteq \acc{ \hatMassAUFpen\ell q \ge \E{\MassAUFpen\ell q} + t } \cup \acc{ \MassN\ell  \ge (1+\alpha)\E{ \MassN\ell } } \\
			&\quad \subseteq \acc{ \hatMassAUFpen\ell q \ge \E{\hatMassAUFpen\ell q} + t } \cup \acc{ \MassN\ell  \ge (1+\alpha)\E{ \MassN\ell } }.
	\end{align*}
	The union bound,~\eqref{eqn : Concentration/Pen/Dirac} and~\eqref{eqn : Concentration/Pen/2ndterm} concludes the proof.
\end{proof}

\subsection{Proof of Theorem~\ref{thm : BOUNDS}}
\label{subsec : Concentration/Limit_Bound}

Assume that $\nu$ is supported by $\intervalleof0m$, with $0<m<\infty$, and $\mathrm G = \mathrm P$ or $\mathrm A$. Let $\beta \in \intervallefo01$ and $\zeta > \LimMassG(\beta)$. Let $0 < \eps < \zeta  - \LimMassG(\beta)$. For large enough $\ell>0$,
	\[ \E{\MassGDF0{\ell \beta \base 1}\ell} \le \ell\p{\LimMassG(\beta) + \eps},  \]
	therefore, by~\eqref{eqn : CONCENTRATION/Directed_Unpen},
	\begin{align}
		\Pb{ \MassGDF0{\ell \beta \base 1}\ell \ge \zeta \ell }%
			&\le \Pb{ \vphantom\int \MassGDF0{\ell \beta \base 1}\ell \ge \E{\MassGDF0{\ell \beta \base 1}\ell} + \p{\zeta - \LimMassG(\beta) - \eps} \ell }\eol
			&\le \exp\p{ -h\p{\frac{\p{\zeta - \LimMassG(\beta) - \eps} \ell}{\E{\MassGDF0{\ell \beta \base 1}\ell} }} \frac{\E{\MassGDF0{\ell \beta \base 1}\ell} }{m}  }. \nonumber
	\end{align}
	Taking the $\log$, multiplying by $-\frac 1\ell$ and letting $\ell \to \infty$ yields
	\begin{equation*}
		\FdT_\rG(\beta,\zeta) \ge h\p{ \frac{\zeta - \LimMassG(\beta) - \eps}{ \LimMassG(\beta) }  } \frac{\LimMassG(\beta)}{m}.
	\end{equation*}
	Letting $\eps\to0$, we get~\eqref{eqn : BOUNDS/Unpen}.

We use similar arguments to obtain the bound~\eqref{eqn : BOUNDS/Pen} from~\eqref{eqn : CONCENTRATION/Directed_Pen}, provided
\begin{equation}
	\label{eqn : Concentration/Pen/Scaling}
	\lim_{n\to\infty} \frac{\E{\MassN \ell} }{\ell} = \Cr{DIRAC} \nu\p{\intervalleoo0\infty}^{1/d},
\end{equation}
for a constant $\Cl{DIRAC}>0$ depending only on the dimension. To see this, first note that by Theorem~\ref{thm : Intro/LLN}, the limit in~\eqref{eqn : Concentration/Pen/Scaling} exists and is finite. Moreover, by a scaling argument (see Lemma 2.2 in \cite{Article3}) it is proportional to $\nu\p{\intervalleoo0\infty}^{1/d}$. \qed


\section{Upper-tail large deviation estimate}
\label{sec : UpperTail}

We recall that $\rG$ denotes either $\rP$ or $\rA^{(q)}$, for some $q\in \intervalleff0\infty$. In this section we prove Theorem~\ref{thm : MAIN/UpperTail}. To do so, we show by a subadditivity argument the existence and finiteness of the analogue of \eqref{eqn : MAIN/UpperTail/Directed} for diamond animals defined~in Section~\ref{subsec : Preli/AuxProcesses} (see Proposition~\ref{prop : UpperTail/LimitDiamAntidiam}). We then extend this result to $\MassGDF\cdot\cdot\cdot$ (see Proposition~\ref{prop : UpperTail/Limit}), and finally prove that the upper-tail deviations are indeed of order $\exp\p{-\ell}$ (see Proposition~\ref{prop : UpperTail/OdG}). 
\begin{Proposition}
	\label{prop : UpperTail/LimitDiamAntidiam}
	There exists a nondecreasing, convex, continuous fonction $\FdT :\intervallefo01 \times \intervalleoo0\infty \rightarrow \intervallefo0\infty$ such that for all $\delta\in \intervalleof01$, $\beta \in \intervalleoo01$ and $\zeta \in \intervalleoo0\infty$,
	\begin{align}
		\label{eqn : UpperTail/LimitDiamAntidiam/Diam}
		\FdT(\beta, \zeta) &= \lim_{\ell \to \infty} -\frac 1\ell \log\Pb{ \MassGDC\delta{0}{\ell \beta \base 1}{\ell} \ge \ell \zeta } = \inf_{\ell \ge1} -\frac 1\ell \log\Pb{ \MassGDC\delta{0}{\ell \beta \base 1}{\ell} \ge \ell \zeta }.
	\end{align}
	Moreover, $\FdT$ satisfies Theorem~\ref{thm : MAIN/UpperTail}\eqref{item : MAIN/UpperTail/ConvexNondec}.
\end{Proposition}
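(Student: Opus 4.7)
My plan is to obtain $\FdT$ from a subadditivity argument on the diamond mass, then deduce regularity.

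Fix $\delta \in \intervalleof{0}{1}$, $\beta \in \intervalleoo{0}{1}$, $\zeta > 0$, and set $a_\delta(\ell) \dpe -\log \Pb{\MassGDC{\delta}{0}{\ell\beta\base 1}{\ell} \ge \zeta\ell}$. The key geometric fact is the inclusion
\[ \Diamant{\delta}{0}{\ell_1\beta\base 1} \cup \Diamant{\delta}{\ell_1\beta\base 1}{(\ell_1+\ell_2)\beta\base 1} \subseteq \Diamant{\delta}{0}{(\ell_1+\ell_2)\beta\base 1}, \]
a direct cone computation based on the fact that all three diamonds share the same axis. The intersection of the two sub-diamonds lies on the Lebesgue-null hyperplane through $\ell_1\beta\base 1$, so $\p{\SetGDC{\delta}{0}{\ell_1\beta\base 1}{\ell_1}, \SetGDC{\delta}{\ell_1\beta\base 1}{(\ell_1+\ell_2)\beta\base 1}{\ell_2}}$ is concatenable in $\SetGDC{\delta}{0}{(\ell_1+\ell_2)\beta\base 1}{\ell_1+\ell_2}$. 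Applying Lemma~\ref{lem : Concatenation} and using stationarity of $\pro$ yields $a_\delta(\ell_1+\ell_2) \le a_\delta(\ell_1) + a_\delta(\ell_2)$. A real-variable version of Fekete's lemma then gives $\FdT_\delta(\beta, \zeta) \dpe \lim_{\ell\to\infty} a_\delta(\ell)/\ell = \inf_{\ell \ge 1} a_\delta(\ell)/\ell$. To see this limit is finite under~\eqref{ass : ExpMoment}, one constructs for every $\ell$ an event of probability at least $e^{-c\ell}$ forcing $\MassGDC{\delta}{0}{\ell\beta\base 1}{\ell} \ge \zeta\ell$: place $\lceil \zeta\ell/t_0\rceil$ Poisson atoms of mass $\ge t_0$ in prescribed small balls along the axis of the diamond, connected by a short tree; \eqref{ass : ExpMoment} ensures $\nu\p{\intervallefo{t_0}{\infty}}$ has only exponential decay in $t_0$, hence the claimed bound.

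For the $\delta$-independence: the trivial inclusion $\Diamant{\delta_1}{x}{y} \subseteq \Diamant{\delta_2}{x}{y}$ when $\delta_1 \le \delta_2$ gives $\FdT_{\delta_2} \le \FdT_{\delta_1}$. For the reverse, I would cover $\Diamant{\delta_2}{0}{\ell\beta\base 1}$ by a chain of $\delta_1$-diamonds of sub-linear axial extent along $\base 1$, chosen so that their aggregate transverse width contains that of the $\delta_2$-diamond, and apply Lemma~\ref{lem : Concatenation} to the chain. After taking $-\log$ and letting $\ell\to\infty$ the length-inflation factor vanishes, yielding $\FdT_{\delta_1} \le \FdT_{\delta_2}$, so I may denote the common value $\FdT(\beta, \zeta)$.

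Convexity of $\FdT$ in $(\beta, \zeta)$ follows the same concatenation template with mixed parameters: writing $(\beta, \zeta) = \lambda(\beta_1, \zeta_1) + (1-\lambda)(\beta_2, \zeta_2)$ and concatenating $\SetGDC{\delta}{0}{\lambda\ell\beta_1\base 1}{\lambda\ell}$ with $\SetGDC{\delta}{\lambda\ell\beta_1\base 1}{\ell\beta\base 1}{(1-\lambda)\ell}$, for which the same cone calculation yields concatenability in $\SetGDC{\delta}{0}{\ell\beta\base 1}{\ell}$. Monotonicity in $\zeta$ is immediate from the inclusion of events, and monotonicity in $\beta$ follows by extending a $\delta$-diamond animal from $0$ to $\ell\beta\base 1$ by a mass-zero axial segment to reach a larger $\beta$, then absorbing the length inflation through the already-established subadditivity and monotonicity in $\zeta$. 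Continuity on the open set $\intervallefo01 \times \intervalleoo0\infty$ is a consequence of convexity and finiteness. The main technical obstacle I anticipate is the $\delta$-independence step, which requires simultaneously controlling the transverse width of the chain of small $\delta_1$-diamonds covering the $\delta_2$-diamond and the corresponding length inflation, so that it is only $1+o(1)$.
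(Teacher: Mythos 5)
Your skeleton (subadditivity plus Fekete for fixed $\delta$, then $\delta$-independence, then convexity/monotonicity by concatenation) is the paper's, and the subadditivity, finiteness and convexity steps are essentially fine --- though note the proposition holds without~\eqref{ass : ExpMoment}: for finiteness you only need $\nu\p{\intervallefo{t_0}{\infty}}>0$ for some $t_0$, not any decay of the tail, so invoking the moment assumption there is unnecessary (and your parenthetical about exponential decay is backwards). The first genuine gap is the reverse inequality in the $\delta$-independence step. A chain of $\delta_1$-diamonds of \emph{sub-linear} axial extent along $\base 1$ has sub-linear transverse width, while $\Diamant{\delta_2}{0}{\ell\beta\base 1}$ has transverse width of order $\ell$; no such chain can cover it, so the anticipated ``$1+o(1)$ length inflation'' is unattainable. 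Moreover, for a covering argument you would need the decomposition/BK Lemma~\ref{lem : Decomposition}, not Lemma~\ref{lem : Concatenation}, whose inequality goes the other way. The paper's route (via Lemma 2.3 of \cite{Article2}) accepts a \emph{linear} inflation: it flanks the $\delta=1$ event between $0$ and $\ell\beta\base1$ with two $\delta$-diamond events of length $\Cr{AD}\ell$, each also required to carry mass at rate $\zeta$, and concatenates them inside a $\delta$-diamond of length $(1+2\Cr{AD})\ell$; the bookkeeping $(1+2\Cr{AD})\FdTDiamond \le \FdTDiamond[1] + 2\Cr{AD}\FdTDiamond$ then cancels the constant-factor inflation exactly. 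Your plan, as stated, cannot be repaired by sending the inflation to zero.

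The second gap is monotonicity in $\beta$. Extending an animal joining $0$ to $\ell\beta_1\base1$ by a massless axial segment to $\ell\beta_2\base1$ produces a witness of an event with \emph{both} parameters rescaled, and after normalizing by the new length it yields an upper bound on $\FdT$ at a larger $\beta$ and a smaller $\zeta$ in terms of $\FdT(\beta_1,\zeta)$ --- the opposite of the inequality $\FdT(\beta_1,\zeta)\le\FdT(\beta_2,\zeta)$ you need, and monotonicity in $\zeta$ plus subadditivity cannot reverse it (the obstruction is that a $(\beta_2,\zeta)$-witness carries mass beyond the hyperplane through $\ell\beta_1\base1$, so it cannot be folded into a $\beta_1$-diamond by adding segments). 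The paper's argument uses rotation invariance: two legs of Euclidean length $\beta_2\ell$, one tilted to $\ell\p{\beta_1\base1+\sqrt{\beta_2^2-\beta_1^2}\,\base2}$ and one back to $2\ell\beta_1\base1$, concatenate inside $\Gap{0}{2\beta_1\ell\base1}$, giving $2\FdT(\beta_1,\zeta)\le 2\FdT(\beta_2,\zeta)$. Finally, your continuity claim only covers the interior: convexity gives continuity on $\intervalleoo01\times\intervalleoo0\infty$, but the definition of $\FdT(0,\cdot)$ and continuity up to $\beta=0$ require the monotone limit $\FdT(0,\zeta)\dpe\lim_{\beta\to0}\FdT(\beta,\zeta)$ and a Dini-type argument, which your proposal omits.
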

\begin{Proposition}
	\label{prop : UpperTail/Limit}
	For all $\beta \in \intervallefo01$ and $\zeta \in \intervalleoo0\infty$,
	\begin{equation}
		\label{eqn : UpperTail/Limit}
		\FdT(\beta,\zeta) = \lim_{\ell \to \infty} -\frac1\ell \log\Pb{ \MassGDF{0}{\ell \beta \base 1}{\ell} \ge \ell \zeta }.
	\end{equation}
	For all $\zeta\in \intervalleoo0\infty$,
	\begin{equation}
		\label{eqn : UpperTail/LimitFree}
		\FdT(0,\zeta) = \lim_{\ell \to \infty} -\frac1\ell \log\Pb{ \MassGUF\ell \ge \ell \zeta }.
	\end{equation}
\end{Proposition}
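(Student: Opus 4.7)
For the easy direction of \eqref{eqn : UpperTail/Limit}, the inclusion $\SetGDC{\delta}{0}{\ell\beta\base 1}{\ell} \subseteq \SetGDF{0}{\ell\beta\base 1}{\ell}$, valid for any $\delta\in\intervalleof01$, yields $\MassGDC{\delta}{0}{\ell\beta\base 1}{\ell} \le \MassGDF{0}{\ell\beta\base 1}{\ell}$ and hence $\Pb{\MassGDF{0}{\ell\beta\base 1}{\ell} \ge \ell\zeta} \ge \Pb{\MassGDC{\delta}{0}{\ell\beta\base 1}{\ell} \ge \ell\zeta}$. Proposition~\ref{prop : UpperTail/LimitDiamAntidiam} then gives $\limsup_{\ell\to\infty} -\frac{1}{\ell}\log\Pb{\MassGDF{0}{\ell\beta\base 1}{\ell} \ge \ell\zeta} \le \FdT(\beta,\zeta)$. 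The analogous upper-side bound for $\MassGUF\ell$ in \eqref{eqn : UpperTail/LimitFree} follows from $\MassGUF\ell \ge \MassGDF{0}{\ell\beta'\base 1}{\ell}$ for arbitrary $\beta'\in\intervalleoo01$, combined with the continuity of $\FdT(\cdot,\zeta)$ at $0$ provided by Proposition~\ref{prop : UpperTail/LimitDiamAntidiam}.

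For the matching $\liminf$ I would apply the BK-type bound of Lemma~\ref{lem : Decomposition} to a lattice-skeleton decomposition. Fix scales $1 \ll L' \ll L \ll \ell$ and set $N \approx \ell/L$. Slice the strip between $\{x_1 = 0\}$ and $\{x_1 = \ell\beta\}$ by the hyperplanes $H_i = \{x : x_1 = iL\beta\}$ for $0 \le i \le N$. Any $\xi \in \SetGDF{0}{\ell\beta\base 1}{\ell}$ (reduced from an animal to a path of length $\le 2\ell$ via a depth-first search when $\rG = \rA^{(q)}$) admits first-crossing points $y_i \in H_i \cap \xi$; approximate each $y_i$ by the nearest vertex $v_i$ of an $L'$-grid in $H_i \cap \clball{0,\ell}$. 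The number of skeletons $(v_0,\dots,v_N)$ is at most $(C\ell/L')^{(d-1)N} = \exp(o(\ell))$ when $L'$ grows slowly enough with $L$. Each piece $\xi_i$ of $\xi$ lying between $H_i$ and $H_{i+1}$, extended by connector segments of length $O(L')$ to $v_i$ and $v_{i+1}$, sits in a diamond $\Diamant{\delta}{v_i}{v_{i+1}}$ of diameter $L(1+o(1))$. Lemma~\ref{lem : Decomposition} then produces
\[
    \Pb{\MassGDF{0}{\ell\beta\base 1}{\ell} \ge \ell\zeta}
    \le \sum_{(v_i)} \sum_{\substack{(t_i) \in \N^N \\ \sum_i t_i \ge \ell\zeta - O(N)}}
    \prod_{i=0}^{N-1} \Pb{\MassGDC{\delta}{v_i}{v_{i+1}}{L_i} \ge t_i},
\]
and Proposition~\ref{prop : UpperTail/LimitDiamAntidiam} bounds each factor. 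A Chernoff-style optimisation over the splittings $(t_i)$, together with convexity and continuity of $\FdT$, lets me aggregate the $N$ local contributions into $\ell\FdT(\beta,\zeta) - o(\ell)$ since the averaged parameters $(\bar\beta_i, \bar\zeta_i)$ approximate $(\beta,\zeta)$; the skeleton-entropy $\exp(o(\ell))$ is absorbed.

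For the $\liminf$ bound in \eqref{eqn : UpperTail/LimitFree} (the case $\beta = 0$), the same skeleton argument applies with the terminal vertex $v_N$ allowed to range over $\clball{0,\ell} \cap L'\Z^d$, contributing only an extra $(\ell/L')^d = \exp(o(\ell))$ factor. I expect the main obstacle to be the slab-alignment issue: each piece $\xi_i$ naturally lies in the axis-aligned slab between $H_i$ and $H_{i+1}$, whereas $\Diamant{\delta}{v_i}{v_{i+1}}$ is oriented along $v_{i+1}-v_i$ and is therefore slightly tilted because of the orthogonal displacement of $v_i$ within $H_i$. Controlling this tilt, either by forcing consecutive $v_i$ to share orthogonal coordinates or by absorbing the mismatch into the $o(L)$ length budget, while preserving the convex-combination identity that recovers $\FdT(\beta,\zeta)$ from the averaged local rates, is the most delicate part of the argument.
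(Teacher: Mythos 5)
Your easy direction (both for \eqref{eqn : UpperTail/Limit} and for the upper half of \eqref{eqn : UpperTail/LimitFree}) is fine, but the heart of your lower bound has a genuine gap: the containment claim for the slab-sliced pieces is false. Between the first crossing of $H_i$ and the first crossing of $H_{i+1}$ the path is only prevented from touching $H_{i+1}$; it is free to recross $H_i$ backwards and to wander laterally, over distances comparable to its total length, i.e.\ of order $\ell$, not $L$. So the piece $\xi_i$ is in general not contained in $\Diamant{\delta}{v_i}{v_{i+1}}$ for any $\delta \le 1$ (even $\delta=1$, the slab orthogonal to $v_{i+1}-v_i$, fails), nor in any region of diameter $L(1+o(1))$, and Lemma~\ref{lem : Decomposition} cannot be fed with diamond events at scale $L$. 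This is a scale problem, not the ``tilt'' issue you single out as the delicate point. The paper avoids it by decomposing along the path's own geometry rather than along fixed hyperplanes: Lemma~\ref{lem : UpperTail/Limit/Paths/Decomposition} writes the slightly extended path as a concatenation of at most $1/\delta+2$ cylinder paths, each lying by construction in the gap of its own endpoints; these boundedly many endpoints are then discretized via Lemma~\ref{lem : UpperTail/Limit/Bound}, Lemma~\ref{lem : Decomposition} is applied to this family, and the convexity/monotonicity aggregation you envisage is carried out on the boundedly many pieces (your aggregation step itself, and the $e^{o(\ell)}$ entropy count, would otherwise be in the right spirit).

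The animal case is a second gap. Reducing $\rA^{(q)}$ to a path by depth-first search doubles the length budget (and discards the penalization when $q>0$), so even if the skeleton argument worked you would obtain a lower bound of the shape $2\FdT_{\rP}(\beta/2,\zeta/2)$ rather than $\FdT_{\rA^{(q)}}(\beta,\zeta)$, and \eqref{eqn : UpperTail/Limit} would not follow for $\rG=\rA^{(q)}$. The paper needs no decomposition here at all: any animal in $\SetADFalt{0}{\ell \beta \base1}{\ell}$ is a cylinder animal between its two most distant vertices, so a single diamond event at scale $(1+8\eps)\ell$, a polynomial union bound over lattice endpoints (Lemma~\ref{lem : UpperTail/Limit/Bound}), the infimum form of \eqref{eqn : UpperTail/LimitDiamAntidiam/Diam} and the continuity of $\FdT$ already give the result; I would recommend adopting that route for animals and the record-type decomposition for paths.
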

\begin{Proposition}
	\label{prop : UpperTail/OdG}
	Assume that~\eqref{ass : ExpMoment} holds. For all $\beta \in \intervallefo01$ and $\zeta \in \intervalleoo0\infty$, if then $\zeta > \LimMassG(\beta)$ then $\FdT(\beta,\zeta) >0$.
\end{Proposition}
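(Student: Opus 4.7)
Fix $\beta \in \intervallefo01$ and $\zeta > \LimMassG(\beta)$, and pick $\eta>0$ with $4\eta < \zeta - \LimMassG(\beta)$. Following the Dembo--Gandolfi--Kesten strategy alluded to in the outline, the plan is to truncate the mark distribution at a level $m>0$, to be chosen large. Write $\pro = \pro_{\le m} + \pro_{>m}$ for the independent restrictions of $\pro$ to atoms of mark in $\intervalleof0m$ and $\intervalleoo{m}\infty$ respectively. The mass functional decomposes additively; and for $m\ge q$ a short computation comparing the two penalisations gives, for every animal $\xi$,
\[
	\Masspen\xi\cro{\pro} \le \Masspen\xi\cro{\pro_{\le m}} + \p{1+\tfrac qm}\Mass\xi\cro{\pro_{>m}} \le \Masspen\xi\cro{\pro_{\le m}} + 2\,\Mass\xi\cro{\pro_{>m}},
\]
the factor $2$ absorbing the extra penalty along atoms of $\pro$ with mark larger than $m$ via $q\le m$. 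Taking the supremum over the relevant animal family yields the pointwise inequality
\[
	\MassGDF{0}{\ell\beta\base1}{\ell}\cro{\pro} \le \MassGDF{0}{\ell\beta\base1}{\ell}\cro{\pro_{\le m}} + 2\,\MassADF{0}{\ell\beta\base1}{\ell}\cro{\pro_{>m}}.
\]

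For the first summand, $\pro_{\le m}$ has marks in $\intervalleof0m$, so Theorem~\ref{thm : BOUNDS} applies. A standard monotone-convergence argument based on the uniform bound \eqref{eqn : Intro/LLN_Animal/GeneralUB} shows that $\LimMassG\cro{\pro_{\le m}}(\beta)\nearrow\LimMassG(\beta)$ as $m\to\infty$, so I may fix $m\ge q$ with $\LimMassG\cro{\pro_{\le m}}(\beta)<\zeta-3\eta$. Theorem~\ref{thm : BOUNDS} then provides a constant $c_1=c_1(m,\eta,\beta)>0$ with $\FdT\cro{\pro_{\le m}}(\beta,\zeta-2\eta)\ge c_1$, and invoking Proposition~\ref{prop : UpperTail/Limit} applied to the truncated process gives
\[
	\Pb{\MassGDF{0}{\ell\beta\base1}{\ell}\cro{\pro_{\le m}} \ge (\zeta - 2\eta)\ell} \le \mathrm{e}^{-c_1\ell/2}
\]
for all $\ell$ sufficiently large.

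The second summand, the heavy-tail contribution, is where I expect the main difficulty. The target is $c_2>0$ such that, for $m$ large enough,
\[
	\Pb{\MassADF{0}{\ell\beta\base1}{\ell}\cro{\pro_{>m}} \ge \eta\ell/4} \le \mathrm{e}^{-c_2\ell}
\]
for all large $\ell$. By \eqref{eqn : Intro/LLN_Animal/GeneralUB}, $\E{\MassAUF\ell\cro{\pro_{>m}}}/\ell \le \Cr{GREEDY}\int_0^\infty \nu(\intervallefo{t\vee m}\infty)^{1/d}\d t$, which vanishes as $m\to\infty$ under \eqref{ass : ExpMoment}; hence the mean is comfortably below $\eta\ell/8$. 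Upgrading this mean bound to an exponential probability bound is the delicate step; my plan is to introduce a second truncation at a much larger level $M$. For the middle piece $\pro|_{\intervalleof{m}{M}}$ (bounded marks), Proposition~\ref{prop : Concentration/Unpen} yields sub-Poisson concentration of order $\exp(-c\,\eta\ell/M)$ around the (small) mean. For the very heavy piece $\pro|_{\intervalleoo{M}\infty}$, I would combine a deterministic observation --- obtained via a minimum-spanning-tree lower bound on connected configurations of points in $\clball{0,\ell}$ --- that any animal of length at most $\ell$ intersects this very sparse process in at most $\grando(\ell\,\nu(\intervalleoo{M}\infty)^{1/d})$ atoms, with an exponential Markov inequality on the sum of the corresponding marks, whose m.g.f.\ is finite by \eqref{ass : ExpMoment}, producing the required $\exp(-c\ell)$ decay. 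Combining both estimates with the decomposition via a union bound gives $\Pb{\MassGDF{0}{\ell\beta\base1}{\ell}\cro{\pro} \ge \zeta\ell} \le \mathrm{e}^{-c_1\ell/2}+\mathrm{e}^{-c_2\ell}$, whence Proposition~\ref{prop : UpperTail/Limit} yields $\FdT(\beta,\zeta)\ge \min(c_1/2,c_2)>0$.
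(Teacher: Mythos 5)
Your overall architecture --- truncate the marks at a level $m$, control the bounded part via Theorem~\ref{thm : BOUNDS} combined with Proposition~\ref{prop : UpperTail/Limit}, control the heavy part separately, and combine by a union bound --- is the same as the paper's (the paper splits each mark as $t = t\wedge m + (t-m)^+$ instead of keeping or discarding whole atoms, which spares the $(1+q/m)$ bookkeeping, but that difference is harmless, and your pointwise comparison of the penalized masses is correct for $m\ge q$). The genuine gap is in the heavy-tail step, which you yourself flag as the delicate one. First, the ``deterministic observation'' is not deterministic: there is no pathwise bound of the form $\grando\p{\ell\,\nu\p{\intervalleoo{M}{\infty}}^{1/d}}$ on the number of atoms of the sparse process met by an animal of length at most $\ell$, because Poisson points cluster --- with positive probability arbitrarily many atoms with mark larger than $M$ lie in a ball of radius $1$, and a short animal visits them all. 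Such a count bound only holds outside an exceptional event, i.e.\ it is itself an upper-tail estimate (for the greedy functional with unit marks), which could be extracted from Proposition~\ref{prop : Concentration/Unpen}; but second, and more seriously, even granting a count bound $N\le K\ell$ with $K = C\,\nu\p{\intervalleoo{M}{\infty}}^{1/d}$, an exponential Markov inequality cannot be applied to ``the sum of the corresponding marks'': the maximizing animal chooses adversarially which $\le K\ell$ atoms it visits among the $\Theta(\ell^d)$ heavy atoms of $\clball{0,\ell}$, and the maximum of such sums under the count constraint alone is typically of order $\ell\log\ell$ rather than $\ell$, since the top $K\ell$ order statistics of exponentially tailed marks in a volume of order $\ell^d$ are of size roughly $\frac{d-1}{\lambda}\log\ell$. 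One must exploit the spatial sparsity of the large marks at every mark scale simultaneously, and this is precisely the ingredient the paper imports: it compares the continuum model with the lattice one by setting $X_v \dpe \Mass{v+\intervalleff{0}{1}^d}$, whose exponential moments are finite under~\eqref{ass : ExpMoment} by Campbell's formula, and invokes Lemma~4.2 of Dembo--Gandolfi--Kesten \cite{Dem01} to obtain Lemma~\ref{lem : UpperTail/OdG/Unbounded}, namely $\liminf_{\ell\to\infty} -\frac1\ell\log\Pb{\MassAUF{\ell}\cro{\pro_2^{(m)}}\ge\eps\ell}>0$ for some large $m$. Your sketch supplies no substitute for this step.

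Two smaller points. The case $\rG=\rA^{(\infty)}$ is part of the statement and is not covered by your argument (your comparison needs $m\ge q$); the paper treats it separately by reducing to a finite $q$ with $\zeta>\LimMassA^{(q)}(\beta)$ via Theorem~1.7 of \cite{Article3} and monotonicity. Also, for $\rA^{(q)}$ with $q>0$ the penalized bound~\eqref{eqn : BOUNDS/Pen} of Theorem~\ref{thm : BOUNDS} requires $\nu$ finite; when the truncated measure is infinite one should note, as in Section~\ref{subsec : Concentration/Greedy_is_BD}, that the penalized and unpenalized masses then coincide almost surely. Finally, you only need $\LimMassG\cro{\pro_{\le m}}(\beta)\le\LimMassG(\beta)$, which is immediate by monotonicity, so the monotone-convergence claim is superfluous.
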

Sections~\ref{subsec : UpperTail/DiamAntidiam}, \ref{subsec : UpperTail/Limit} and \ref{subsec : UpperTail/OdG} are devoted to the proofs of Propositions~\ref{prop : UpperTail/LimitDiamAntidiam}, \ref{prop : UpperTail/Limit} and \ref{prop : UpperTail/OdG} respectively. Let us check that they indeed imply Theorem~\ref{thm : MAIN/UpperTail}.

\begin{proof}[Proof of Theorem~\ref{thm : MAIN/UpperTail}]
	Everything is clear except the converse sense in~\eqref{item : MAIN/UpperTail/OdG}, and~\eqref{item : MAIN/UpperTail/Limit}. Let $\beta \in \intervallefo01$. By~\eqref{eqn : Intro/LLN/Directed}, \[ \forall \zeta < \LimMassG(\beta), \quad \FdT(\beta, \zeta) = 0. \]
	Moreover, since $\FdT$ is continuous, this still holds for $\zeta = \LimMassG(\beta)$, thus~\eqref{item : MAIN/UpperTail/OdG}.

	Let $\LimMassG(\beta) < \zeta_1 < \zeta_2$. By convexity,
	\[
	\frac{\FdT(\beta, \zeta_2) - \FdT(\beta, \zeta_1)}{\zeta_2 - \zeta_1} \ge \frac{\FdT(\beta, \zeta_1) - \FdT(\beta, \LimMassG(\beta))}{\zeta_1 - \LimMassG(\beta) } >0,
	\]
	therefore $\zeta \mapsto \FdT(\beta, \zeta)$ is strictly increasing on $\intervallefo{\LimMassG(\beta)}\infty$, and diverges to $\infty$ as $\zeta \to \infty$.
\end{proof}

\subsection{Existence of the rate function for diamond animals}
\label{subsec : UpperTail/DiamAntidiam}

In this section we prove Proposition~\ref{prop : UpperTail/LimitDiamAntidiam}, which is a consequence of Lemmas~\ref{lem : UpperTail/LimitDiamAntidiam/Diam},~\ref{lem : UpperTail/LimitDiamAntidiam/Antidiam} and~\ref{lem : UpperTail/LimitDiamAntidiam/Convex}. We essentially follow the proof of Proposition~2.1 in \cite{Article2}, simplifying some arguments thanks to rotation-invariance. Our concatenation arguments rely on Lemma~\ref{lem : Concatenation}.
\begin{Lemma}
	\label{lem : UpperTail/LimitDiamAntidiam/Diam}
	For all $\delta \in \intervalleof01$, $\beta \in \intervalleoo01$ and $\zeta \in \intervalleoo0\infty$, the limit
	\begin{equation}
		\FdTDiamond(\beta, \zeta) \dpe \lim_{\ell \to \infty} -\frac 1\ell \log\Pb{ \MassGDC\delta{0}{\ell \beta \base 1}{\ell} \ge \ell \zeta }
	\end{equation}
	exists and is finite.
\end{Lemma}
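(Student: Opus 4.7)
The plan is to set $a_\ell \dpe -\log \Pb{\MassGDC\delta0{\ell\beta\base 1}\ell \ge \ell\zeta} \in \intervalleff0\infty$ for $\ell > 0$, prove that $\ell \mapsto a_\ell$ is subadditive on $\intervalleoo0\infty$, and invoke the continuous-parameter version of Fekete's lemma to conclude $a_\ell / \ell \to \inf_{\ell > 0} a_\ell / \ell \edp \FdTDiamond(\beta,\zeta) \in \intervalleff0\infty$ as $\ell \to \infty$. Finiteness of the limit will then follow from exhibiting a single value of $\ell$ with $a_\ell < \infty$, via an elementary direct construction.

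The subadditivity reduces to Lemma~\ref{lem : Concatenation} after two geometric checks on the cones defining the diamonds. Fix $\ell_1, \ell_2 > 0$. First, each cone $\Cone\delta xu$ is convex (as one verifies directly from its definition), hence so is each diamond; and a coordinate computation shows that $\Cone\delta{a\base 1}{-\base 1}$ restricted to $\acc{z_1 \le a}$ increases with $a$, giving the key inclusion
\[ \Diamant\delta0{\ell_1\beta\base 1} \,\cup\, \Diamant\delta{\ell_1\beta\base 1}{(\ell_1+\ell_2)\beta\base 1} \,\subseteq\, \Diamant\delta0{(\ell_1+\ell_2)\beta\base 1}. \]
Second, the two sub-diamonds meet only inside the hyperplane $\acc{z_1 = \ell_1\beta}$ (in fact, at the single point $\acc{\ell_1\beta\base 1}$ when $\delta < 1$), which is Lebesgue-negligible. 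Together these make the pair $(\SetGDC\delta0{\ell_1\beta\base 1}{\ell_1}, \SetGDC\delta{\ell_1\beta\base 1}{(\ell_1+\ell_2)\beta\base 1}{\ell_2})$ concatenable in $\SetGDC\delta0{(\ell_1+\ell_2)\beta\base 1}{\ell_1+\ell_2}$ in the sense of Definition~\ref{def : Concatenation/Concatenables}: convexity keeps the concatenation inside the enlarged diamond, and the triangle inequality combined with property~(ii) of concatenation yields the length estimate (in the path case directly, since the two paths share the endpoint $\ell_1\beta\base 1$; in the $\mathcal A^*$ case, by routing the distance bound through $\ell_1\beta\base 1$). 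Lemma~\ref{lem : Concatenation} then yields
\[ \MassGDC\delta0{\ell_1\beta\base 1}{\ell_1} + \MassGDC\delta{\ell_1\beta\base 1}{(\ell_1+\ell_2)\beta\base 1}{\ell_2} \le \MassGDC\delta0{(\ell_1+\ell_2)\beta\base 1}{\ell_1+\ell_2}, \]
with the two left-hand summands independent. By stationarity of $\pro$, the second summand has the same distribution as $\MassGDC\delta0{\ell_2\beta\base 1}{\ell_2}$; bounding the event on the right from below by the intersection of the two individual events and taking $-\log$ gives $a_{\ell_1 + \ell_2} \le a_{\ell_1} + a_{\ell_2}$.

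For finiteness, it is enough to exhibit one $\ell$ with $a_\ell < \infty$. Since $\nu$ is nontrivial, choose $a > 0$ with $p_0 \dpe \nu\p{\intervalleoo a\infty} > 0$. Set $N \dpe \ceil{(\ell\zeta + 2q)/a}$ (with $q = 0$ when $\rG = \rP$ or $\rG = \rA$), place $N$ equally spaced points $x_1, \dots, x_N$ in $\intervalleff{\eps \ell \beta}{(1-\eps) \ell \beta} \times \acc0^{d-1}$ for a fixed small $\eps > 0$, and surround each $x_i$ by a cube $C_i$ of fixed side $s \dpe a(1-\beta)/(4 \zeta)$. For $\ell$ large, the cubes are pairwise disjoint and contained in $\Diamant\delta0{\ell\beta\base 1}$ (whose perpendicular half-width there is of order $\ell$), and the event that every $C_i$ contains at least one atom of $\pro$ of mass exceeding $a$ has probability $\p{1 - \mathrm e^{-s^d p_0}}^N \ge \mathrm e^{-C\ell}$ by independence of $\pro$ on disjoint regions. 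On that event, selecting such a heavy atom $x_i' \in C_i$ for each $i$, the path $0 \to x_1' \to \dots \to x_N' \to \ell\beta\base 1$ (or its underlying animal) lies in the diamond, has length at most $\ell\beta + 2 s N \le \ell$, and has penalized mass at least $N a - 2q \ge \ell \zeta$. Therefore $a_\ell \le C\ell < \infty$, and the infimum is finite.

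The main obstacle is really the first geometric inclusion: while the convexity of the diamonds is transparent, verifying that $\Diamant\delta0{\ell_1\beta\base 1}$ sits inside $\Diamant\delta0{(\ell_1+\ell_2)\beta\base 1}$ requires carefully unpacking the cone inequalities. Everything else is routine --- the concatenation-plus-stationarity step is a standard subadditivity pattern, and in the finiteness step it only remains to tune the constant $s$ so that the total detour length $2sN$ is absorbed by the available budget $\ell(1-\beta)$.
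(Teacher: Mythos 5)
Your treatment of the subadditivity step is correct and is exactly the paper's argument: the pair $\p{\SetGDC{\delta}{0}{\ell_1\beta\base 1}{\ell_1}, \SetGDC{\delta}{\ell_1\beta\base 1}{(\ell_1+\ell_2)\beta\base 1}{\ell_2}}$ is concatenable in $\SetGDC{\delta}{0}{(\ell_1+\ell_2)\beta\base 1}{\ell_1+\ell_2}$, Lemma~\ref{lem : Concatenation} gives the superadditive inequality with independent summands, and stationarity turns it into subadditivity of $a_\ell$. Your geometric verifications (convexity of the cones, monotonicity of $\Cone{\delta}{a\base 1}{-\base 1}$ in $a$, intersection of the two sub-diamonds contained in a hyperplane) are correct and simply make explicit what the paper leaves implicit. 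The genuine divergence, and the place where your argument has gaps, is the finiteness/regularity input to Fekete's lemma.

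First, bare subadditivity of a $\intervalleff0\infty$-valued function on $\intervalleoo0\infty$ together with \emph{one} finite value does not yield convergence of $a_\ell/\ell$ (already in the integer case, a subadditive sequence that is finite only on even indices has no limit of $a_n/n$): the continuous-parameter Fekete lemma needs $a$ bounded above on some nondegenerate interval. The paper gets this softly from the same diamond monotonicity you proved: for $\ell\in\intervalleff12$ one has $\acc{\MassGDC{\delta}{0}{\ell\beta\base 1}{\ell} \ge \zeta\ell} \supseteq \acc{\MassGDC{\delta}{0}{\beta\base 1}{1} \ge 2\zeta}$, whence $\sup_{1\le\ell\le2} a_\ell<\infty$. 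Your explicit construction actually proves the stronger statement $a_\ell\le C\ell$ for all large $\ell$, which also suffices, but you must state and use it in that form rather than as ``a single value of $\ell$''. Second, your mass bookkeeping $Na-2q\ge\zeta\ell$ fails for $\rG=\rA^{(\infty)}$, which the convention $q\in\intervalleff0\infty$ includes and which the proof of Proposition~\ref{prop : UpperTail/OdG} relies on: including the anchors $0$ and $\ell\beta\base 1$ costs $2q=\infty$. The fix is to recall that elements of $\cA^*$ need not contain the anchors, so take the animal consisting of the heavy atoms only (no penalty), while in the path case there is no penalty at all. Third, the choice $s=a(1-\beta)/(4\zeta)$ only budgets the detour length; for small $\beta$ the $N\approx\zeta\ell/a$ cubes of that side cannot be pairwise disjoint along a segment of length at most $\beta\ell$, so $s$ must also be taken small compared with $\beta a/\zeta$ (you flag tuning $s$, but only against the length budget). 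All three points are repairable, but as written the finiteness half of the argument does not go through.
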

\begin{Lemma}
	\label{lem : UpperTail/LimitDiamAntidiam/Antidiam}
	For all $\beta \in \intervalleoo01$ and $\zeta \in \intervalleoo0\infty$, $\delta \mapsto \FdTDiamond(\beta, \zeta)$ is constant on $\intervalleof01$. We will simply write it $\FdT(\beta, \zeta)$.
\end{Lemma}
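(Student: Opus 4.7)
Fix $0 < \delta_1 \le \delta_2 \le 1$; I will show $\FdTDiamond[\delta_1](\beta,\zeta) = \FdTDiamond[\delta_2](\beta,\zeta)$ by proving the two opposite inequalities.

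The easy direction $\FdTDiamond[\delta_1] \ge \FdTDiamond[\delta_2]$ follows from the containment $\Diamant{\delta_1}{x}{y} \subseteq \Diamant{\delta_2}{x}{y}$, which yields $\MassGDC{\delta_1}{0}{\ell\beta\base 1}{\ell} \le \MassGDC{\delta_2}{0}{\ell\beta\base 1}{\ell}$, and hence the corresponding inequality for the probabilities. Taking $-\tfrac{1}{\ell}\log$ and passing to the limit via Lemma~\ref{lem : UpperTail/LimitDiamAntidiam/Diam} gives the claim.

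For the reverse direction, I pack $K$ fat $\delta_2$-diamonds inside a single long thin $\delta_1$-diamond and apply Lemma~\ref{lem : Concatenation}. Set $c_\delta := \sqrt{\delta(2-\delta)}/(1-\delta)$ and $c := \tfrac{1}{2}(c_{\delta_2}/c_{\delta_1}-1) \ge 0$. A direct comparison of cross-sectional radii along the common axis shows that for all $L>0$ and $K \ge 1$,
\[
\bigcup_{k=1}^K \Diamant{\delta_2}{(k-1)L\base 1}{kL\base 1} \subseteq \Diamant{\delta_1}{-cL\base 1}{(KL+cL)\base 1}.
\]
Taking $L=\ell\beta$, the family $(\SetGDC{\delta_2}{(k-1)L\base 1}{kL\base 1}{\ell})_{1\le k\le K}$ is then concatenable (Definition~\ref{def : Concatenation/Concatenables}) in $\SetGDC{\delta_1}{-cL\base 1}{(KL+cL)\base 1}{K\ell+2cL}$: a chain of $K$ fat-diamond animals, extended at the two ends by straight connectors of length $cL$, has the prescribed endpoints and total length; successive fat diamonds meet only at the isolated axis points $kL\base 1$, a Lebesgue-null set. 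Lemma~\ref{lem : Concatenation}, combined with $\Pb{\sum_k X_k \ge K\ell\zeta} \ge \Pb{X_1 \ge \ell\zeta}^K$ for the i.i.d.\ copies $X_k$ of $\MassGDC{\delta_2}{0}{\ell\beta\base 1}{\ell}$ and stationarity, yields
\[
\Pb{\MassGDC{\delta_1}{0}{\ell^*\beta^*\base 1}{\ell^*} \ge \ell^*\zeta^*} \ge \Pb{\MassGDC{\delta_2}{0}{\ell\beta\base 1}{\ell} \ge \ell\zeta}^K,
\]
with $\ell^* := \ell(K+2c\beta)$, $\beta^* := \beta(K+2c)/(K+2c\beta) > \beta$, and $\zeta^* := K\zeta/(K+2c\beta) < \zeta$. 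Applying $-\tfrac{1}{\ell^*}\log$ and letting $\ell \to \infty$ (with $K$ fixed) gives
\[
\FdTDiamond[\delta_1](\beta^*,\zeta^*) \le \frac{K}{K+2c\beta}\,\FdTDiamond[\delta_2](\beta,\zeta).
\]

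Finally, I let $K \to \infty$: $\beta^* \searrow \beta$, $\zeta^* \nearrow \zeta$, and the prefactor tends to $1$. The function $\FdTDiamond[\delta_1]$ is nondecreasing in each of its arguments, since $\Pb{\MassGDC{\delta_1}{0}{\ell\beta\base 1}{\ell} \ge \ell\zeta}$ decreases when either $\beta$ or $\zeta$ grows; hence $\FdTDiamond[\delta_1](\beta,\zeta^*(K)) \le \FdTDiamond[\delta_1](\beta^*,\zeta^*(K))$, and the previous bound improves to $\sup_{K\ge 1} \FdTDiamond[\delta_1](\beta,\zeta^*(K)) \le \FdTDiamond[\delta_2](\beta,\zeta)$. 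The main remaining step, and the hardest part of the proof, is to identify this supremum with $\FdTDiamond[\delta_1](\beta,\zeta)$ — in other words, to establish the left-continuity of $\zeta \mapsto \FdTDiamond[\delta_1](\beta,\zeta)$ at the interior point $\zeta$. This is not immediate, since $\FdTDiamond[\delta_1]$ is defined as $\inf_\ell$ of a sequence of left-continuous functions in $\zeta$ and is therefore \emph{a priori} only upper semicontinuous. I would resolve this via the convexity of $\FdTDiamond[\delta_1]$, obtained by a further subadditive concatenation argument in the variables $(\beta,\zeta)$ akin to the one behind Lemma~\ref{lem : UpperTail/LimitDiamAntidiam/Diam}; convexity then yields continuity on the open set $\intervalleoo01 \times \intervalleoo0\infty$ and closes the argument. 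The two inequalities then combine to give $\FdTDiamond[\delta_1](\beta,\zeta) = \FdTDiamond[\delta_2](\beta,\zeta)$, which is the desired $\delta$-independence.
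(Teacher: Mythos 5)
Your overall strategy (one trivial inclusion for one inequality, a concatenation via Lemma~\ref{lem : Concatenation} for the other) is the same as the paper's, and the easy direction $\FdTDiamond[\delta_1]\ge\FdTDiamond[\delta_2]$ is fine. The genuine gap is in the endgame of the hard direction. Because your end pieces are massless straight connectors, your construction perturbs the parameters to $(\beta^*(K),\zeta^*(K))\neq(\beta,\zeta)$, and you then need two regularity facts about $\FdTDiamond[\delta_1]$: monotonicity in $\beta$ and left-continuity in $\zeta$. The first is asserted with an incorrect justification: $\Pb{\MassGDC{\delta_1}{0}{\ell\beta\base 1}{\ell}\ge\ell\zeta}$ is \emph{not} obviously nonincreasing in $\beta$, since changing $\beta$ changes both the anchor point $\ell\beta\base 1$ and the diamond $\Diamant{\delta_1}{0}{\ell\beta\base 1}$, and there is no pointwise domination or evident coupling between the two events. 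In the paper, monotonicity in $\beta$ is only proved afterwards (in Lemma~\ref{lem : UpperTail/LimitDiamAntidiam/Convex}), by a rotation-invariance concatenation that lands in a $\delta=1$ diamond and uses the $\delta$-independence you are in the middle of proving; so as used, this step is unproven and essentially circular. The second fact you explicitly leave open; your proposed fix (convexity in $\zeta$ at fixed $\beta,\delta_1$, by the same subadditive concatenation as in Lemma~\ref{lem : UpperTail/LimitDiamAntidiam/Diam}) would indeed give continuity in $\zeta$, but it repairs only the $\zeta$-shift, not the $\beta$-shift, so the argument as written does not close.

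The paper's proof avoids any regularity input by choosing the appended pieces differently: instead of massless connectors, it glues two \emph{thin} $\delta$-diamonds onto the two ends of a single $\delta=1$ diamond, with anchors on the axis at $-C\beta\ell$, $0$, $\beta\ell$, $(1+C)\beta\ell$ and length budgets $C\ell$, $\ell$, $C\ell$, each piece required to gather mass at least its own budget times $\zeta$. The resulting target set is exactly a $\delta$-diamond event with aspect $\beta$, scale $(1+2C)\ell$ and threshold $(1+2C)\ell\zeta$, so Lemma~\ref{lem : Concatenation} plus stationarity gives, after taking $-\frac1\ell\log$ and $\ell\to\infty$,
\begin{equation*}
	(1+2C)\,\FdTDiamond[\delta](\beta,\zeta)\;\le\;2C\,\FdTDiamond[\delta](\beta,\zeta)+\FdTDiamond[1](\beta,\zeta),
\end{equation*}
i.e.\ $\FdTDiamond[\delta](\beta,\zeta)\le\FdTDiamond[1](\beta,\zeta)$ with $(\beta,\zeta)$ preserved exactly and no continuity or monotonicity needed. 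You can repair your proof along these lines: make the end pieces mass-bearing thin $\delta_1$-diamonds whose axial span is proportional to $\beta$ (then comparing $\delta_1$ with $\delta_2=1$ suffices, and the $K$-fold packing becomes unnecessary). A further minor point: as stated, your concatenability claim does not fit Definition~\ref{def : Concatenation/Concatenables}, since the concatenation of the $K$ fat-diamond animals alone does not reach the outer anchors $-cL\base 1$ and $(KL+cL)\base 1$; the straight connectors would have to be included as (deterministic) members of the concatenated family, with the corresponding length bookkeeping, for Lemma~\ref{lem : Concatenation} to apply.
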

\begin{Lemma}
	\label{lem : UpperTail/LimitDiamAntidiam/Convex}
	The function $\FdT$ has a nondecreasing, convex, continuous extension on $\intervallefo01\times \intervalleoo0\infty$.
\end{Lemma}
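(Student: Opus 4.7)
The plan is to establish convexity of $\FdT$ on $\intervalleoo01\times\intervalleoo0\infty$ by a concatenation-of-cylinders argument, then extend to $\beta=0$ and derive monotonicity and continuity simultaneously. Throughout I take $\delta=1$, so that all diamonds are slabs orthogonal to their axis, since Lemma~\ref{lem : UpperTail/LimitDiamAntidiam/Antidiam} makes $\FdT$ independent of $\delta$.

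For \textbf{convexity}, fix $(\beta_i,\zeta_i)\in\intervalleoo01\times\intervalleoo0\infty$ for $i=1,2$, $\lambda\in(0,1)$, $L>0$, and set $\ell_1=\lambda L$, $\ell_2=(1-\lambda)L$. The families $\SetGDC 1 0 {\ell_1\beta_1\base 1}{\ell_1}$ and $\SetGDC 1 {\ell_1\beta_1\base 1}{(\ell_1\beta_1+\ell_2\beta_2)\base 1}{\ell_2}$ consist of animals in the adjacent slabs $\{0\le x\cdot\base 1\le\ell_1\beta_1\}$ and $\{\ell_1\beta_1\le x\cdot\base 1\le\ell_1\beta_1+\ell_2\beta_2\}$; their union is contained in $\Diamant 1 0 {(\ell_1\beta_1+\ell_2\beta_2)\base 1}$ and their intersection is a Lebesgue-null hyperplane, so the family is concatenable in $\SetGDC 1 0 {(\ell_1\beta_1+\ell_2\beta_2)\base 1}{L}$. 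Lemma~\ref{lem : Concatenation} yields a superadditive mass inequality with independent summands; taking $-\tfrac1L\log\Pb{\cdot}$ of the associated probability bound, translating the second cylinder back to the origin by stationarity, and sending $L\to\infty$, I obtain
\[
\FdT\bigl(\lambda\beta_1+(1-\lambda)\beta_2,\,\lambda\zeta_1+(1-\lambda)\zeta_2\bigr)\le\lambda\FdT(\beta_1,\zeta_1)+(1-\lambda)\FdT(\beta_2,\zeta_2).
\]
Monotonicity of $\FdT(\beta,\cdot)$ at fixed $\beta$ is trivial from the monotonicity of $\Pb{X\ge\ell\zeta}$ in $\zeta$.

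For the \textbf{extension to $\beta=0$ and monotonicity in $\beta$}, I exploit rotation-invariance of $\pro$ via an antipodal concatenation. For $r\in(0,1)$, animals in $\SetGDC 1 0 {\ell r\base 1}{\ell}$ and $\SetGDC 1 0 {-\ell r\base 1}{\ell}$ both contain the origin and occupy the slabs $\{0\le x\cdot\base 1\le\ell r\}$ and $\{-\ell r\le x\cdot\base 1\le 0\}$, meeting only on a Lebesgue-null hyperplane; their concatenation lies in $\SetGUF{2\ell}$. Lemma~\ref{lem : Concatenation} together with rotation-invariance (which gives the two summands the same law and makes them independent) yields
\[
\Pb{\MassGUF{2\ell}\ge 2\ell\zeta}\ge\Pb{\MassGDC 1 0 {\ell r\base 1}{\ell}\ge\ell\zeta}^2.
\]
Taking $-\tfrac1{2\ell}\log$ and letting $\ell\to\infty$ gives $\liminf_{L\to\infty}\bigl(-\tfrac1L\log\Pb{\MassGUF L\ge L\zeta}\bigr)\le\FdT(r,\zeta)$, while the inclusion $\SetGDC 1 0 {\ell r\base 1}{\ell}\subseteq\SetGUF\ell$ gives the reverse inequality. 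Consequently $\lim_{r\to 0^+}\FdT(r,\zeta)$ exists and equals this common $\liminf$; define $\FdT(0,\zeta)$ as this value. Then $\FdT(\beta,\zeta)\ge\FdT(0,\zeta)$ for every $\beta\in[0,1)$, and since a finite convex function on $[0,1)$ whose infimum is attained at the left endpoint is nondecreasing (otherwise a decrease $f(a)>f(b)$ with $0<a<b$ would force $f(0)>f(b)$ by convexity, contradicting the infimum property at $0$), $\FdT(\cdot,\zeta)$ is nondecreasing. Joint nondecreasingness on $\intervallefo01\times\intervalleoo0\infty$ follows by composing the two monotonicities, and joint convexity extends from $\intervalleoo01\times\intervalleoo0\infty$ by taking right-limits in $\beta$. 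Continuity on $\intervallefo01\times\intervalleoo0\infty$ then follows because finite convex functions are automatically continuous on the interior of their domain and continuity across $\beta=0$ is built into the defining right-limit.

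The main obstacle is the antipodal concatenation step. It hinges on verifying carefully that the two cylinders occupy slabs meeting only on a Lebesgue-null hyperplane (so that Lemma~\ref{lem : Concatenation} genuinely provides both independence and the superadditive mass inequality), using rotation-invariance of $\pro$ to identify the two summands in law, and recognizing that the resulting inequality pairs with the trivial $\SetGDC 1 0 {\ell r\base 1}{\ell}\subseteq\SetGUF\ell$ bound to match $\lim_{\beta\to 0^+}\FdT(\beta,\zeta)$ with the $\liminf$ of the undirected upper-tail rate; without this link between the directed (diamond) and undirected frameworks, convexity alone would not force monotonicity in $\beta$.
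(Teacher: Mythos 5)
Your convexity step is essentially the paper's own argument (concatenation of two adjacent cylinders, Lemma~\ref{lem : Concatenation}, stationarity), and the observation that a convex function on $\intervallefo01$ whose minimum sits at the left endpoint is nondecreasing is correct as a fact. The problem is the way you try to supply that premise: the extension to $\beta=0$ and the monotonicity in $\beta$ are not established by your antipodal argument.

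Concretely, both of your inequalities point in the \emph{same} direction. The antipodal concatenation gives $\Pb{\MassGUF{2\ell}\ge 2\ell\zeta}\ge\Pb{\MassGDC1{0}{\ell r\base 1}{\ell}\ge\ell\zeta}^2$, i.e.\ (undirected rate) $\le\FdT(r,\zeta)$; but the inclusion $\SetGDC1{0}{\ell r\base 1}{\ell}\subseteq\SetGUF\ell$ gives $\Pb{\MassGDC1{0}{\ell r\base 1}{\ell}\ge\ell\zeta}\le\Pb{\MassGUF\ell\ge\ell\zeta}$, which is again (undirected rate) $\le\FdT(r,\zeta)$ — not "the reverse inequality". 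So you have no upper bound on $\FdT(r,\zeta)$ as $r\to0$: you cannot conclude that $\lim_{r\to0^+}\FdT(r,\zeta)$ exists, is finite, or equals the undirected rate, and hence you cannot place the infimum of $\FdT(\cdot,\zeta)$ at $\beta=0$. (Convexity alone certainly does not give monotonicity in $\beta$: think of $\beta\mapsto(\beta-\tfrac12)^2$.) The missing direction — that constraining the animal to reach $\ell r\base 1$ inside a diamond costs asymptotically nothing as $r\to 0$ — is exactly the content of~\eqref{eqn : UpperTail/LimitFree} in Proposition~\ref{prop : UpperTail/Limit}, whose proof in the paper (union bound over lattice endpoints for animals, cylinder decomposition plus BK for paths) comes \emph{after} this lemma and explicitly uses its conclusions (monotonicity and continuity of $\FdT$); invoking it here would be circular. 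The paper avoids all this by proving monotonicity in $\beta$ directly for $0<\beta_1\le\beta_2<1$: split the displacement $2\ell\beta_1\base 1$ into two legs $\ell\p{\beta_1\base 1+\sqrt{\beta_2^2-\beta_1^2}\,\base 2}$ and its reflection, each of Euclidean length $\ell\beta_2$, take thin disjoint diamonds around them inside $\Gap{0}{2\ell\beta_1\base 1}$, and use Lemma~\ref{lem : Concatenation} together with rotation invariance to get $\FdT(\beta_1,\zeta)\le\FdT(\beta_2,\zeta)$; the monotone limit as $\beta\downarrow0$ then defines $\FdT(0,\cdot)$, and joint continuity up to $\beta=0$ is obtained via Dini's theorem (your "built into the right-limit" remark only gives continuity in $\beta$ at fixed $\zeta$, not joint continuity). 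Your antipodal construction is precisely the degenerate case of this two-leg construction, but aimed at the undirected quantity instead of a directed one at smaller $\beta$; replacing the target $\SetGUF{2\ell}$ by $\SetGDC1{0}{2\ell\beta_1\base 1}{2\ell}$ as above repairs the argument.
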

\begin{proof}[Proof of Lemma~\ref{lem : UpperTail/LimitDiamAntidiam/Diam}]
	Let $\delta$, $\beta$, $\zeta$ be as in the lemma. Let $\ell_1, \ell_2 \ge 0$. Since $\SetGDC\delta{0}{\ell_1 \beta \base 1}{\ell_1}$ and $\SetGDC\delta{\ell_1}{ (\ell_1 + \ell_2) u\beta \base 1}{\ell_2}$ concatenate into $\SetGDC\delta{0}{(\ell_1 + \ell_2) \beta \base 1}{\ell_1 + \ell_2}$, by Lemma~\ref{lem : Concatenation},
	\begin{equation}
		\MassGDC\delta{0}{\ell_1 \beta \base 1}{\ell_1} + \MassGDC\delta{\ell_1 \beta \base1}{ (\ell_1 + \ell_2) \beta \base 1}{\ell_2}%
			\le \MassGDC\delta{0}{(\ell_1 + \ell_2) \beta \base 1}{\ell_1 + \ell_2},\nonumber \end{equation}
	and the terms of the sum are independent. Consequently,
	\begin{align}
	\begin{split}
		&\Pb{ \MassGDC\delta{0}{\ell_1 u}{\ell_1} \ge \ell_1 \zeta} \cdot \Pb{\MassGDC\delta{\ell_1}{ (\ell_1 + \ell_2) \beta \base 1}{\ell_2} \ge \ell_2 \zeta}\\ %
			& \quad \le \Pb{ \MassGDC\delta{0}{(\ell_1 + \ell_2) \beta \base 1}{\ell_1 + \ell_2} \ge (\ell_1 + \ell_2)\zeta}, \end{split} \nonumber
	\intertext{thus by stationarity,}
	\label{eqn : UpperTail/LimitDiamAntidiam/Diam/Subadditivity}
		\begin{split}
		&-\log \Pb{ \MassGDC\delta{0}{\ell_1 \beta \base 1}{\ell_1} \ge \ell_1 \zeta} -\log \Pb{\MassGDC\delta{0}{ \ell_2 \beta \base 1}{\ell_2} \ge \ell_2 \zeta} \\%
			&\quad \ge -\log \Pb{ \MassGDC\delta{0}{(\ell_1 + \ell_2) \beta \base 1}{\ell_1 + \ell_2} \ge (\ell_1 + \ell_2)\zeta}.
		\end{split}
	\end{align}
	Besides, for all $\ell \in \intervalleff12$,
	\begin{equation*}
		\acc{ \MassGDC{\delta}0{ \ell \beta \base 1}{\ell} \ge \zeta \ell  } \supseteq \acc{ \MassGDC\delta0{\beta \base 1}1 \ge 2\zeta },
	\end{equation*}
	hence
	\begin{equation}
		\label{eqn : UpperTail/LimitDiamAntidiam/Diam/Finiteness}
		\sup_{1\le \ell \le 2} -\log \Pb{ \MassGDC\delta{0}{\ell \beta \base 1}{\ell} \ge \ell \zeta } < \infty.
	\end{equation}
	The lemma follows from~\eqref{eqn : UpperTail/LimitDiamAntidiam/Diam/Subadditivity},~\eqref{eqn : UpperTail/LimitDiamAntidiam/Diam/Finiteness} and Fekete's lemma.
\end{proof}
\begin{proof}[Proof of Lemma~\ref{lem : UpperTail/LimitDiamAntidiam/Antidiam}]
	Let $\beta$, $\zeta$ be as in the lemma, and $\delta\in \intervalleof01$. For all $\ell>0$,
	\begin{equation*}
		\acc{\MassGDC{\delta}{0}{\ell \beta \base 1}{\ell} \ge \zeta \ell } \subseteq \acc{\MassGDC{1}{0}{\ell \beta \base 1}{\ell} \ge \zeta \ell },
	\end{equation*}
	therefore by Lemma~\ref{lem : UpperTail/LimitDiamAntidiam/Diam},
	\begin{equation}
		\label{eqn : UpperTail/LimitDiamAntidiam/Antidiam/UB}
		\FdTDiamond[\delta](\beta \base 1, \zeta) \le \FdTDiamond[1](\beta \base 1,\zeta).
	\end{equation}

	Let us show the converse inequality. It follows from the proof of Lemma 2.3 in \cite{Article2} that there exists a constant $\Cl{AD} = \Cr{AD}(\delta)>0$ such that for all $\ell>0$, the sets $\SetGDC{\delta}{-\Cr{AD} \beta \base 1\ell }{0}{\Cr{AD}\ell}$, $\SetGDC{1}{0}{\ell  \beta \base 1}{\ell}$ and $\SetGDC{\delta}{\ell  \beta \base 1 }{ (1+ \Cr{AD})\ell \beta \base1 }{\Cr{AD}\ell}$ concatenate into $\SetGDC{\delta}{-\Cr{AD} \beta \base 1\ell }{(1+ \Cr{AD})\ell  \beta \base 1}{ (1+2\Cr{AD})\ell}$. Consequently, by Lemma~\ref{lem : Concatenation},
	\begin{equation*}
		\begin{split}
		&\Pb{ \MassGDC{\delta}{-\Cr{AD}\beta \base 1\ell }{0}{\Cr{AD}\ell} \ge \Cr{AD}\ell \zeta } \cdot \Pb{ \MassGDC{1}{0}{\ell \beta \base 1}{\ell} \ge \ell \zeta } \cdot \Pb{ \MassGDC{\delta}{\ell \beta \base 1 }{(1+ \Cr{AD})\ell \beta \base 1}{\Cr{AD}\ell} \ge \Cr{AD}\ell \zeta } \\
		&\quad \le \Pb{ \MassGDC{\delta}{-\Cr{AD}\beta \base 1\ell }{(1+ \Cr{AD})\ell \beta \base 1}{ (1+2\Cr{AD})\ell} \ge (1+2\Cr{AD})\ell \zeta }.
		\end{split}
	\end{equation*}
	By Lemma~\ref{lem : UpperTail/LimitDiamAntidiam/Diam} and stationarity of the model, taking the $\log$, multiplying by $-\frac1\ell$ and letting $\ell \to \infty$ gives
	\begin{equation}
		\label{eqn : UpperTail/LimitDiamAntidiam/Antidiam/LB}
		\FdTDiamond[1](\beta, \zeta) \ge \FdTDiamond(\beta,\zeta),
	\end{equation}
	which concludes the proof.
\end{proof}

\begin{proof}[Proof of Lemma~\ref{lem : UpperTail/LimitDiamAntidiam/Convex}]
	We first show that $\FdT$ is convex on $\intervalleoo01 \times \intervalleoo0\infty$. Let $\beta_1, \beta_2 \in \intervalleoo01$, and $\theta_1, \theta_2 \in \intervalleff01$ be such that $\theta_1+\theta_2=1$. Then $\SetGDC{1/2}{0}{\theta_1\ell \beta_1 \base 1}{ \theta_1 \ell}$ and $\SetGDC{1}{\theta_1 \ell \beta_1 \base 1}{ \theta_1 \ell \beta_1 \base 1+ \theta_2 \ell \beta_2 \base 1  }{\theta_2 \ell}$ concatenate into $\SetGDC{1}{0}{ \ell( \theta_1 \beta_1  + \theta_2 \beta_2 )\base 1 }{\ell}$. By Lemma~\ref{lem : Concatenation}, for all $\ell>0$ and $\zeta_1, \zeta_2 >0$,
	\begin{equation*}
		\begin{split}
		&\Pb{\MassGDC{1}{0}{\theta_1\ell \beta_1 \base1 }{ \theta_1 \ell} \ge \theta_1 \ell  \zeta_1} \cdot \Pb{ \MassGDC{1}{\theta_1 \ell \beta_1 \base1 }{ \theta_1 \ell \beta_1 \base1 + \theta_2 \ell \beta_2 \base 1   }{\theta_2 \ell} \ge \theta_2 \ell \zeta_2 } \\
		&\quad \le \Pb{ \MassGDC{1}{0}{ \ell( \theta_1 \beta_1 \base1  + \theta_2 \beta_2 \base 1 ) }{\ell} \ge (\theta_1 + \theta_2)\ell }.
		\end{split}
	\end{equation*}
	By Lemmas~\ref{lem : UpperTail/LimitDiamAntidiam/Diam} and~\ref{lem : UpperTail/LimitDiamAntidiam/Antidiam}, taking the $\log$, multiplying by $-\frac1\ell$ and letting $\ell \to \infty$ gives
	\begin{equation}
		\label{eqn : UpperTail/LimitDiamAntidiam/WeakConvexity}
		\FdT( \theta_1 \beta_1    + \theta_2 \beta_2   , \theta_1 \zeta_1 + \theta_2 \zeta_2) \le \theta_1 \FdT(\beta_1   , \zeta_1) + \theta_2 \FdT(\beta_2   , \zeta_2),
	\end{equation}
	i.e.\ the announced convexity. In particular it is continuous on $\intervalleoo01 \times \intervalleoo0\infty$.

	The function $\FdT$ is clearly nondecreasing with respect to its second coordinate. Let us show that it is also nondecreasing with respect to the first one. Let $0< \beta_1 \le \beta_2 <1$ and $\zeta > 0$. Let $\delta>0$ be small enough so that
	\begin{equation}
		\Diamant \delta{0}{\beta_1 \base 1 + \sqrt{\beta_2^2 - \beta_1^2} \base 2 } \cap \Diamant \delta{\beta_1 \base 1 + \sqrt{\beta_2^2 - \beta_1^2} \base 2 }{2\beta_1 \base 1} = \emptyset
	\end{equation}
	and
	\begin{equation}
		\Diamant\delta{0}{\beta_1 \base 1 + \sqrt{\beta_2^2 - \beta_1^2} \base 2 }, \Diamant\delta{\beta_1 \base 1 + \sqrt{\beta_2^2 - \beta_1^2} \base 2 }{2\beta_1} \subseteq \Gap{0}{2\beta_1 \base 1}.
	\end{equation}
	This implies that for all $\ell>0$, the sets \[ \SetGDC\delta{0}{\ell\p{\beta_1 \base 1 + \sqrt{\beta_2^2 - \beta_1^2} \base 2 } }\ell \text{ and } \SetGDC\delta{ \ell \p{\beta_1 \base 1 + \sqrt{\beta_2^2 - \beta_1^2} \base 2 } }{2\ell \beta_1 \base 1}\ell \] are concatenable in $\SetGDC 1{0}{2\ell \beta_1 \base 1}{2\ell}$. Consequently, by Lemma~\ref{lem : Concatenation}, for all $\ell >0$,
	\begin{equation*}
		\begin{split}
			\Pb{ \MassGDC\delta{0}{2\beta_1 \base 1}{2\ell} \ge 2\zeta \ell } &\ge \Pb{ \MassGDC\delta{0}{\ell\p{\beta_1 \base 1 + \sqrt{\beta_2^2 - \beta_1^2} \base 2 } }\ell \ge \zeta \ell} \\
			&\qquad \cdot \Pb{  \MassGDC\delta{\ell\p{\beta_1 \base 1 + \sqrt{\beta_2^2 - \beta_1^2} \base 2 } }{2\ell \beta_1 \base 1}\ell \ge \zeta  \ell}.
		\end{split}
	\end{equation*}
	Since the model is stationary and rotation-invariant,
	\begin{equation*}
		\Pb{ \MassGDC\delta{0}{2\beta_1 \base 1}{2\ell} \ge 2\zeta \ell } \ge \Pb{ \MassGDC\delta{0}{ \ell \beta_2 \ell }\ell \ge \zeta \ell}^2.
	\end{equation*}
	Taking the $\log$, multiplying by $-1/\ell$ and letting $\ell \to \infty$ leads to
	\begin{equation}
		\FdT(\beta_1, \zeta) \le \FdT(\beta_2, \zeta).
	\end{equation}

	We extend $\FdT$ to $\intervallefo01 \times \intervalleoo0\infty$ by defining
	\begin{equation}
		\FdT(0, \zeta) \dpe \lim_{\beta \to 0} \FdT(\beta, \zeta),
	\end{equation}
	for $\zeta>0$. It is still convex. In particular, $\FdT(0, \cdot)$ is continuous on $\intervalleoo0\infty$. Furthermore, the family of continuous functions $\p{\FdT(\beta, \cdot)}_{0< \beta < 1}$ converges monotonously to $\FdT(0, \cdot)$ as $b\to0$, therefore by Dini's theorem, $\FdT(\beta, \cdot)$ converges uniformly to $\FdT(0,\cdot)$ on compact subsets of $\intervalleoo0\infty$. Consequently, $\FdT$ is continuous on $\intervalleoo01 \times \intervalleoo0\infty$.
\end{proof}

\subsection{Existence of the rate function for unconstrained animals}
\label{subsec : UpperTail/Limit}
In this section we prove Proposition~\ref{prop : UpperTail/Limit}. Since the proof of~\eqref{eqn : UpperTail/Limit} and~\eqref{eqn : UpperTail/LimitFree} are nearly identical, we only treat the former.\footnote{Notice that in the case $\rG= \rA = \rA^{(0)}$,~\eqref{eqn : UpperTail/LimitFree} is actually a special case of~\eqref{eqn : UpperTail/Limit}. }  In the case $\mathrm G = \mathrm A^{(q)}$, for some $q\in \intervalleff0\infty$, it relies on the elementary observation that any animal is a cylinder animal, thus we can conclude with the union bound and Proposition~\ref{prop : UpperTail/LimitDiamAntidiam}. In the case $\rG = \rP$, that observation fails, but an extra step consisting in writing any path as a concatenation of cylinder paths fixes the issue. Existence of such a decomposition is ensured by Lemma~\ref{lem : UpperTail/Limit/Paths/Decomposition}, which is an easier version of Lemma 2.11 in \cite{Article2} (see also Lemma 7.7 in \cite{Mar02}). In both cases we will need Lemma~\ref{lem : UpperTail/Limit/Bound}, proven at the end of the section.

\begin{Lemma}
	\label{lem : UpperTail/Limit/Paths/Decomposition}
	Let $0<\delta <1$, $\beta \in \intervallefo01$, $\ell>0$ and $\gamma \in \SetPDF{0}{\beta \ell \base 1}{\ell}$. Then the path $(-\delta \ell \base 1 , 0 ) \concat \gamma \concat ( \beta \ell \base 1, (\beta+\delta) \base 1 )$ admits a decomposition of the form
	\begin{equation}
		\label{eqn : UpperTail/Limit/Paths/Decomposition}
		-\delta \ell \base 1 = x_0 \Path{\gamma_{1}} x_1  \Path{\gamma_{2}}  \dots  \Path{\gamma_{R}} x_R = (\beta+\delta) \ell \base 1,
	\end{equation}
	where:
	\begin{enumerate}[(i)]
		\item For all $r \in \intint{ 1}{ R }$, $\gamma_r$ is a cylinder path.
		\item The number of factors in the concatenation satisfies \[ R \le \frac1 \delta +2. \]
	\end{enumerate}
\end{Lemma}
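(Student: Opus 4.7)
I would approach this by constructing the decomposition explicitly via cuts along hyperplanes orthogonal to $\base 1$, in the spirit of Lemma~7.7 of~\cite{Mar02}. Denote $\tilde\gamma := (-\delta\ell \base 1, 0) \concat \gamma \concat (\beta\ell \base 1, (\beta+\delta)\ell \base 1)$; it has length $L \le (1+2\delta)\ell$ and connects $-\delta\ell\base 1$ to $(\beta+\delta)\ell\base 1$. Parametrize $\tilde\gamma$ by arclength $s\in[0,L]$ and set $\pi(s) := \tilde\gamma(s)\cdot \base 1$, so $\pi(0) = -\delta\ell$ and $\pi(L) = (\beta+\delta)\ell$.

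Fix $R := \lfloor \beta/\delta \rfloor + 2 \le 1/\delta + 2$ and equispaced levels $\tau_0 = -\delta\ell < \tau_1 < \cdots < \tau_R = (\beta+\delta)\ell$ with spacing at most $\delta\ell$. I would then pick cut times $0 = s_0 < s_1 < \cdots < s_R = L$ such that each restricted subpath $\tilde\gamma|_{[s_{i-1},s_i]}$ remains in the axial slab $\{z_1 \in [\tau_{i-1}, \tau_i]\}$, with $\pi(s_{i-1}) = \tau_{i-1}$ and $\pi(s_i) = \tau_i$. Concretely, given $s_{i-1}$, I would take $s_i$ to be the first time after $s_{i-1}$ at which $\pi$ hits $\tau_i$ without having first dipped below $\tau_{i-1}$; should this fail, update $s_{i-1}$ to the latest prior time at which $\pi = \tau_{i-1}$. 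Continuity of $\pi$ together with the boundary values ensures this procedure terminates at $s_R = L$. Define $x_i := \tilde\gamma(s_i)$ and $\gamma_i := \tilde\gamma|_{[s_{i-1}, s_i]}$.

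The critical step, and the main obstacle, is verifying that $\gamma_i \subseteq \Diamant{1}{x_{i-1}}{x_i}$, the slab perpendicular to $x_i - x_{i-1}$. Since $\pi(x_i) - \pi(x_{i-1}) = \tau_i - \tau_{i-1} > 0$, the $\base 1$-component of $x_i - x_{i-1}$ is positive, and when the transverse component of $x_i - x_{i-1}$ is small compared to $\tau_i - \tau_{i-1}$, the tilted slab is close to the axial slab and containment follows directly. In the opposite case, the axial slab does not immediately give the tilted one, and a refinement is needed that monitors transverse excursions as well (as in~\cite{Mar02}); the key quantitative feature is preserved because each cut corresponds to a $\base 1$-advance of $\tau_i - \tau_{i-1}$ and the total $\base 1$-advance is $(\beta + 2\delta)\ell$, which yields $R \le 1/\delta + 2$.
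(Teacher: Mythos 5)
Your proposal leaves the lemma unproved at precisely its crucial point. First, the fixed-level cutting itself is problematic: a consecutive decomposition in which the $i$-th piece stays in the axial slab $\{\tau_{i-1}\le z_1\le \tau_i\}$ with $\pi(s_{i-1})=\tau_{i-1}$, $\pi(s_i)=\tau_i$ need not exist, because the first coordinate of $\gamma$ is in general not monotone at scale $\delta\ell$: the path may cross many levels, come back, and cross them again (excursions of order $\ell$ are allowed by the length budget). Your repair rule (``update $s_{i-1}$ to the latest prior time at which $\pi=\tau_{i-1}$'') is not a well-defined algorithm: moving $s_{i-1}$ forward either leaves a portion of the path uncovered (the $\gamma_r$ must concatenate exactly into the given path, since the application needs $\sum_r\norme{\gamma_r}\le(1+2\delta)\ell$) or forces that portion into the previous piece, whose slab containment was established with the old endpoint and whose chord has now changed; neither termination nor the bound $R\le \frac1\delta+2$ is justified after such updates (and already the count is off: with span $(\beta+2\delta)\ell$ and spacing at most $\delta\ell$ you need $\lceil(\beta+2\delta)/\delta\rceil$ levels, which can exceed $\lfloor\beta/\delta\rfloor+2$).

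Second, and more fundamentally, even if each piece did lie in its axial slab with endpoints on the two bounding hyperplanes, this would not make it a cylinder path in the sense of the paper: $\Diamant{1}{x_{i-1}}{x_i}$ is the slab orthogonal to the chord $x_i-x_{i-1}$, which is tilted as soon as $x_{i-1}$ and $x_i$ have different transverse components. A point $z$ of the piece with $z_1\in[\tau_{i-1},\tau_i]$ but transverse displacement of order $\ell$ (which the axial slab does not forbid, the slab width being only of order $\delta\ell$) can have $\ps{z-x_{i-1}}{x_i-x_{i-1}}<0$ or $>\norme{x_i-x_{i-1}}^2$, so containment fails. Your remark that containment ``follows directly'' when the transverse part of the chord is small is not correct without a quantitative comparison between that tilt and the transverse diameter of the piece, and in the remaining case you explicitly defer to an unspecified ``refinement as in [Mar02]'' --- but controlling exactly these transverse excursions, by an adaptive choice of cut points rather than cuts at prescribed $\base 1$-levels, is the whole content of the lemma (and of Lemma 2.11 in the cited companion paper). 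As written, the argument establishes neither item (i) nor, after the repairs it would require, item (ii).
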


\begin{Lemma}
	\label{lem : UpperTail/Limit/Bound}
	For all $0< \eps < 1/8$, for large enough $\ell >0$, for all $x,y \in \clball{0,2\ell}$ and $\norme{x-y} \le \ell' \le 2\ell $, there exists $\hat x, \hat y \in \clball{0, 3\ell}\cap \Z^d$ such that $\norme{\hat x - \hat y} \ge \norme{x-y}$, and any element of $\SetGDC1xy{\ell'}$ is included in an element of $\SetGDC1{\hat x}{\hat y}{\ell' + 8\eps \ell}$.
\end{Lemma}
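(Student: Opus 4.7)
The plan is to place $\hat x$ and $\hat y$ at lattice points slightly beyond $x$ and $y$ along the line through them. Set $u \dpe (y-x)/\norme{y-x}$ (well-defined since $x\neq y$, as required for $\Diamant{1}{x}{y}$ to make sense), pick $\hat x \in \Z^d$ within Euclidean distance $\sqrt d/2$ of $x - 3\eps\ell u$, and $\hat y \in \Z^d$ within Euclidean distance $\sqrt d/2$ of $y + 3\eps\ell u$; such lattice points exist because any closed ball of radius $\sqrt d/2$ meets $\Z^d$. The enlarged animal $\hat\xi$ is then obtained from any $\xi \in \SetGDC{1}{x}{y}{\ell'}$ by adjoining the two vertices $\hat x, \hat y$ and the two edges $\{\hat x, x\}, \{\hat y, y\}$; in the path case one simply prepends $\hat x$ and appends $\hat y$ to the vertex sequence, which keeps the object a path.

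Several checks are then routine. From $\norme{\hat x - x}, \norme{\hat y - y} \le 3\eps\ell + \sqrt d/2$ and $x,y \in \clball{0, 2\ell}$, one obtains $\hat x, \hat y \in \clball{0, 3\ell}$ for $\ell$ large (using $\eps < 1/8$). Writing $\hat y - \hat x = (\norme{y-x} + 6\eps\ell)\,u + \delta$ with $\norme\delta \le \sqrt d$, the reverse triangle inequality yields $\norme{\hat y - \hat x} \ge \norme{y-x} + 6\eps\ell - \sqrt d \ge \norme{y-x}$ for $\ell$ large. The length of $\hat\xi$ is bounded by $\norme\xi + \norme{\hat x - x} + \norme{\hat y - y} \le \ell' + 6\eps\ell + \sqrt d \le \ell' + 8\eps\ell$, again for $\ell$ large.

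The main step, and the principal obstacle, is the inclusion $\xi \subseteq \Diamant{1}{\hat x}{\hat y}$ for every $\xi \in \SetGDC{1}{x}{y}{\ell'}$. Setting $\hat u \dpe (\hat y - \hat x)/\norme{\hat y - \hat x}$, the decomposition above yields $\norme{\hat u - u} \le 2\sqrt d/\norme{\hat y - \hat x} = O(1/(\eps\ell))$. Since $\xi$ is connected with total length at most $\ell' \le 2\ell$ and contains $x$, every vertex $z \in \xi$ satisfies $\norme{z - x} \le \ell'$. One then splits
\[
\ps{z - \hat x}{\hat u} = \ps{z - x}{u} + \ps{z - x}{\hat u - u} + \ps{x - \hat x}{\hat u}.
\]
The first term is nonnegative because $z \in \Diamant{1}{x}{y}$; the second has absolute value at most $\ell' \norme{\hat u - u} = O(1/\eps)$; the third equals $3\eps\ell\ps{u}{\hat u} - \ps{\hat x - x + 3\eps\ell u}{\hat u} \ge 3\eps\ell(1 - o(1)) - \sqrt d/2$ as $\ell \to \infty$. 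For $\ell$ large enough, depending on both $\eps$ and $d$, the positive "buffer" of order $\eps\ell$ absorbs the two error sources and the sum is positive. The symmetric bound $\ps{z - \hat y}{-\hat u} \ge 0$ follows by the same computation, which gives the inclusion. Balancing the rotational discrepancy (times the animal diameter), the lattice-rounding error, and the shift amplitude is what forces the threshold "for large enough $\ell$" in the statement.
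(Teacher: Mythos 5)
Your construction is essentially the paper's proof: push the two anchors outward along the axis $u=(y-x)/\norme{y-x}$ by a buffer of order $\eps\ell$, round to nearby lattice points, and verify the slab inclusion by an inner-product estimate in which the $\Theta(\eps\ell)$ buffer absorbs both the rotation of the axis (of order $\sqrt d/(\eps\ell)$, multiplied by the diameter $\le 2\ell$) and the lattice-rounding error. The paper packages that estimate as a separate rescaled statement (Lemma~\ref{lem : UpperTail/Limit/HP}) and shifts by $2\eps\ell$ with rounding radius $2\delta\ell$ instead of your $3\eps\ell$ and $\sqrt d/2$, but the computation and all your quantitative checks (membership in $\clball{0,3\ell}$, $\norme{\hat x-\hat y}\ge\norme{x-y}$, length at most $\ell'+8\eps\ell$, threshold on $\ell$ depending on $\eps$ and $d$) are the same in substance and correct.

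One detail needs repair in the case $\cG=\cA^{*}$. An element $\xi=(V,E)$ of $\SetADCalt{1}{x}{y}{\ell'}$ is only required to be empty or to satisfy $\norme\xi+\d(x,V)+\d(y,V)\le\ell'$; it need not contain $x$ or $y$ as vertices, and in the application (Case 1 of the proof of Proposition~\ref{prop : UpperTail/Limit}) the anchors are taken in $V\cup\acc{0,\ell\beta\base1}$, so this situation really occurs. Then adjoining the edges $\acc{\hat x,x}$ and $\acc{\hat y,y}$ may produce a disconnected graph (hence not an animal), and your justification ``$\xi$ contains $x$'' for the bound $\norme{z-x}\le\ell'$ is not available. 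Both points are easily fixed: for any $z\in V$, connectedness gives $\norme{z-x}\le\norme\xi+\d(x,V)\le\ell'$, which is all your estimate uses; and instead of extending $\xi$ you can simply observe that $\xi$ itself lies in $\SetADCalt{1}{\hat x}{\hat y}{\ell'+8\eps\ell}$, since $\d(\hat x,V)\le\d(x,V)+3\eps\ell+\sqrt d/2$ (similarly at $\hat y$) and you have already shown $V\subseteq\Diamant{1}{\hat x}{\hat y}$ (the empty animal maps to itself). This is exactly how the paper concludes in the animal case; the extension by prepending $\hat x$ and appending $\hat y$ is only needed for paths, where your argument is fine because elements of $\SetPDC{1}{x}{y}{\ell'}$ do start at $x$ and end at $y$.
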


We first state and prove Lemma~\ref{lem : UpperTail/Limit/HP} that will be a key ingredient in the proof of Lemma~\ref{lem : UpperTail/Limit/Bound}. 
\begin{Lemma}
	\label{lem : UpperTail/Limit/HP}
	For all $0<\eps < 1$, there exists $0<  \delta < \eps$ such that for all $\beta\in\intervallefo01$ and $\ell>0$, for all $x\in \clball{-\eps \ell \base 1, \delta \ell}$ and $y\in\clball{(\beta + \eps)\ell \base 1, \delta \ell}$,
	\begin{equation}
		\Gap{0}{\beta \ell \base 1} \cap \clball{0,\ell} \subseteq \Gap xy.
	\end{equation}
\end{Lemma}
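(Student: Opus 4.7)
The plan is to establish the inclusion by direct computation in the half-space description of $\Gap{x}{y}$. By homogeneity the statement reduces to $\ell = 1$: I seek $\delta > 0$ depending only on $\eps$ such that for all $\beta \in \intervallefo01$, $x \in \clball{-\eps\base 1, \delta}$ and $y \in \clball{(\beta+\eps)\base 1, \delta}$, the cap $\Gap{0}{\beta\base 1} \cap \clball{0,1}$ lies in $\Gap{x}{y}$. The geometric picture is transparent: for the unperturbed choice $x = -\eps\base 1$ and $y = (\beta+\eps)\base 1$, the diamond $\Gap{x}{y}$ is exactly the slab $\acc{z \in \R^d : -\eps \le \ps{z}{\base 1} \le \beta+\eps}$, which strictly contains the cap with margin $\eps$ on each side in the $\base 1$ direction. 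A perturbation of $x$ and $y$ of magnitude at most $\delta$ merely tilts and translates this slab, so for $\delta$ small compared to $\eps$ the cap remains inside.

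To make this precise I would parametrize $x = -\eps\base 1 + a$ and $y = (\beta+\eps)\base 1 + b$ with $\norme{a}, \norme{b} \le \delta$, set $v = y - x = (\beta+2\eps)\base 1 + (b-a)$, and rewrite $z \in \Gap{x}{y}$ as the two scalar conditions $\ps{x}{v} \le \ps{z}{v} \le \ps{y}{v}$. Expanding the three inner products yields leading-order values $-\eps(\beta+2\eps)$, $(\beta+2\eps)\ps{z}{\base 1}$, and $(\beta+\eps)(\beta+2\eps)$ respectively, plus perturbation terms bounded by a fixed constant times $\delta$ (the constant depending only on $\beta + 2\eps \le 3$). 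For $z$ in the cap, $\ps{z}{\base 1} \in \intervalleff{0}{\beta}$ and $\norme{z} \le 1$, so $\ps{z}{v}$ lies in the interval $\intervalleff{-2\delta}{\beta(\beta+2\eps) + 2\delta}$. The crucial observation is that the main-term gap between this interval and each of $\ps{x}{v}$, $\ps{y}{v}$ equals $\eps(\beta+2\eps)$, bounded below by $2\eps^2$ uniformly in $\beta \in \intervallefo01$.

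Choosing $\delta = c\eps^2$ for a small enough universal constant $c > 0$ (a tight bookkeeping gives $c = 2/11$, which in particular ensures $\delta < \eps$) makes the perturbation errors strictly smaller than the $2\eps^2$ gap, so both inequalities hold. The only mild technical point is keeping the error bounds uniform in $\beta$, which is automatic since all $\beta$-dependence enters through coefficients bounded by universal constants. The degenerate case $\beta = 0$, where $\Gap{0}{\beta\base 1}$ collapses to a single point, reduces to observing that the origin lies in $\Gap{x}{y}$, which follows from the same estimate applied with $z = 0$.
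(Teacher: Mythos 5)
Your proof is correct and follows essentially the same route as the paper: reduce to $\ell=1$ by scaling, write $\Gap{x}{y}$ as the slab $\ps{x}{y-x}\le\ps{z}{y-x}\le\ps{y}{y-x}$, observe the unperturbed margin is $\eps(\beta+2\eps)\ge 2\eps^2$ uniformly in $\beta$, and absorb the $O(\delta)$ perturbation errors by taking $\delta$ of order $\eps^2$. The paper's proof performs the same expansion (with an unspecified constant $\Cr{HYPERPLANS}$ in place of your explicit bookkeeping), so the two arguments differ only in presentation.
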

\begin{proof}
	By bilinearity of the inner product, it is sufficient to treat the case $\ell = 1$ only. Fix $0<\eps < 1$. Let $x\in \clball{-\eps \base 1, \delta }$, $y\in\clball{(\beta + \eps) \base 1, \delta }$ and $z\in \Gap{0}{\beta  \base 1} \cap \clball{0,1}$. Then
	\begin{align*}
		\ps{y-x}{z-x}%
			&= \ps{ (\beta +\eps)\base 1 + \eps \base 1 + \p{y-(\beta +\eps)\base 1} - (\eps \base 1 +x) }{ z+ \eps \base 1 - (\eps \base 1 -x) }\\
			&\ge \ps{(\beta + 2\eps)\base 1}{ z + \eps \base1} - \Cr{HYPERPLANS} \delta,
		\intertext{where $\Cl{HYPERPLANS} > 0$ is a constant. Consequently, if $\delta \le \frac{2\eps^2}{\Cr{HYPERPLANS}}$ then }
		\ps{y-x}{z-x}%
			&\ge 2\eps^2 - \Cr{HYPERPLANS}\delta \ge 0.
	\end{align*}
	Similarly, under the same condition, $\ps{y-x}{z-y}\le 0$, hence $z \in \Gap xy$.
\end{proof}

\begin{proof}[Proof of Lemma~\ref{lem : UpperTail/Limit/Bound}]
	We only treat the case $\cG = \cA^*$, since the other one is nearly identical. Fix $0<\eps < 1$. Let $0<\delta<\eps$ be as in Lemma~\ref{lem : UpperTail/Limit/HP}. Let $\ell \ge \frac{\sqrt d}{2\delta}$. Note that all balls of radius $2\delta \ell$ intersect $\Z^d$. Let $x,y \in \clball{0,2\ell}$ and $\norme{x-y} \le \ell' \le 2\ell$. Let $\xi = (V,E)\in \SetADCalt1{x}{y}{\ell}$. Consider
	\begin{equation}
		(x,y) \in \argmin_{(\hat x, \hat y) \in V^2} \norme{\hat x-\hat y}.
	\end{equation}
	Let $e \dpe \frac{y-x}{\norme{y-x}}$. Let $\hat x \in \clball{ x -2\eps \ell e, 2\delta \ell}\cap \Z^d$ and $\hat y \in \clball{ y +2\eps \ell e, 2\delta \ell}\cap \Z^d$. Up to translation and rotation, Lemma~\ref{lem : UpperTail/Limit/HP} implies that 
	\begin{equation}
		\label{eqn : UpperTail/Limit/Bound/Inclusion_Gap}
		\Gap xy  \cap \clball{0,2\ell} \subseteq \Gap{\hat x}{\hat y}.
	\end{equation}

	We first claim that
	\begin{equation}
		\label{eqn : UpperTail/Limit/Bound/xi_in_SetADCalt}
		\xi \in \SetADCalt1{\hat x}{\hat y}{\ell' + 8\eps \ell}.
	\end{equation}
	Indeed, by triangle inequality,
	\begin{equation*}
		\norme\xi + \d(\hat x, \xi) + \d(\hat y, \xi) \le \ell' + 8\eps \ell, 
	\end{equation*}
	Moreover, by definition of $x$ and $y$, $\xi \subseteq \Gap xy$, thus by the inclusion~\eqref{eqn : UpperTail/Limit/Bound/Inclusion_Gap}, \eqref{eqn : UpperTail/Limit/Bound/xi_in_SetADCalt} holds.

	Inclusion~\eqref{eqn : UpperTail/Limit/Bound/Inclusion_Gap} also implies that
	\begin{equation*}
		\norme{\hat x-\hat y} \ge \norme{x-y}.
	\end{equation*}
	By triangle inequality, since $\xi \subseteq \clball{0,2\ell}$,
	\begin{equation*}
		\hat x,\hat y \in \clball{0, 3\ell},
	\end{equation*}
	which concludes the proof.
\end{proof}

\begin{proof}[Proof of Proposition~\ref{prop : UpperTail/Limit} ]
	\emph{Case 1: Assume that $\mathrm G = \mathrm A^{(q)}$, with $q\in \intervalleff0\infty$.} Fix $\beta \in \intervallefo01$ and $\zeta \in \intervalleoo0\infty$. Let $0<\eps < 1/8$ and $\ell$ be large enough so that the conclusion of Lemma~\ref{lem : UpperTail/Limit/Bound} holds. Let $\xi =(V,E) \in \SetADFalt0{\ell \beta \base 1}\ell$ be a non empty animal, and $x,y \in V \cup \acc{0, \ell \beta \base 1} $ be such that $\norme{x-y} = \diam \p{V \acc{0, \ell \beta \base 1} } $. Then $\norme{x-y} \ge \beta \ell$, and
	\[ \xi \in \SetADCalt1xy\ell. \]
	Consequently, by Lemma~\ref{lem : UpperTail/Limit/Bound} and union bound,
	\begin{align}
		\Pb{ \MassADFpen0{\ell \beta e}\ell q \ge \zeta \ell }%
			&\le \sum_{ \begin{array}{c} \hat x,\hat y \in \clball{0,3\ell}\cap  \Z^d \\ \norme{ \hat x- \hat y} \ge \beta \ell	\end{array}  }%
				\Pb{ \MassADCpen1{\hat x}{\hat y}{ (1+8\eps)\ell }q \ge \zeta \ell } \eol
			&\le \Pol(\ell) \max_{ \begin{array}{c} \hat x,\hat y \in \clball{0,3\ell}\cap  \Z^d \\ \norme{ \hat x- \hat y} \ge \beta \ell	\end{array}  }
				\Pb{ \MassADCpen1{\hat x}{\hat y}{ (1+8\eps)\ell }q \ge \zeta \ell }, \nonumber
	\end{align}
	where $\Pol$ is a polynomial that only depends on $d$. Consequently,
	\begin{align}
		&-\frac 1\ell \log \Pb{ \MassADFpen0{\ell \beta \base1 }\ell q \ge \zeta \ell} \eol
			&\quad \ge - \frac{\log \Pol(\ell)}{\ell}  + \min_{ \begin{array}{c} \hat x,\hat y \in \clball{0,3\ell}\cap  \Z^d \\ \norme{ \hat x- \hat y} \ge \beta \ell	\end{array}  }  -\frac1\ell \log \Pb{ \MassADCpen1{\hat x}{\hat y}{ (1+8\eps)\ell }q \ge \zeta \ell }. \nonumber
	\end{align}
	Plugging in~\eqref{eqn : UpperTail/LimitDiamAntidiam/Diam}, we get
	\begin{align}
		-\frac 1\ell \log \Pb{ \MassADFpen0{\ell \beta \base 1}\ell q \ge \zeta \ell}%
			&\ge - \frac{\log \Pol(\ell)}{\ell}  + (1+8\eps) \FdT_{ \rA^{(q)} }\p{ \frac{\beta }{1 +8\eps}, \frac{\zeta }{1+8\eps} }.\nonumber
		\intertext{Letting $\ell \to \infty$ gives}
		\liminf_{\ell \to \infty} -\frac1\ell \log \Pb{ \MassADFpen0{\ell \beta \base 1}\ell q \ge \zeta \ell }%
			&\ge (1+8\eps) \FdT_{ \rA^{(q)} }\p{ \frac{\beta }{1 +8\eps}, \frac{\zeta }{1+8\eps} }.
		\intertext{The continuity of $\FdT$ then provides the inequality}
		\liminf_{\ell \to \infty }-\frac 1\ell \log \Pb{ \MassADFpen0{\ell \beta \base 1}\ell q \ge \zeta \ell}%
			&\ge \FdT_{ \rA^{(q)} }\p{ \beta, \zeta }.
		\intertext{The converse inequality}
		\limsup_{\ell \to \infty }-\frac 1\ell \log \Pb{ \MassADFpen0{\ell \beta \base1}\ell q \ge \zeta \ell}%
			&\le \FdT_{ \rA^{(q)} }\p{ \beta, \zeta }
	\end{align}
	is straightforward and completes the proof of~\eqref{eqn : UpperTail/Limit}.

	\emph{Case 2: Assume that $\rG = \rP$.} Fix $\beta \in \intervallefo01$. Let $0 < \eps , \delta < 1/8$ and $\ell$ be large enough so that the conclusion of Lemma~\ref{lem : UpperTail/Limit/Bound} holds and $\eps\ell \ge \frac1\delta +2$ . An $R$-uple
	\begin{equation*}
		( y_r, z_r, \ell_r)_{1\le r \le R} \in \p{ \p{ \clball{0, 3\ell} \cap \Z^d}^2 \times \intint{1}{ \ceil{3\ell} } }^R
	\end{equation*}
	is said to be an \emph{eligible skeleton} if the three following assertions hold:
	\begin{align}
		\label{eqn : UpperTail/Limit/Eligible1}
			R &\le R_0 \dpe \frac1\delta +2,\\
		\label{eqn : UpperTail/Limit/Eligible3}
			\sum_{r=1}^R \norme{y_r - z_r} &\ge \beta \ell, \\
		\label{eqn : UpperTail/Limit/Eligible4}
			\sum_{r=1}^R \ell_r &\le \ell + 9R\eps \ell + 2\delta \ell.
	\end{align}
	We denote by \[ \cE = \cE(\ell, \eps, \delta, \beta) = \acc{ \p{ y_{r}^E, z_{r}^E,  \ell_{r}^E }_{1\le r \le R_E} }_{ E \in \cE  } \] the set of eligible skeletons. We first claim that $\SetPDF{0}{\ell \beta \base 1}{\ell}$ is decomposable in the family
	\begin{equation}
		\p{ \SetPDC1{y_r^E}{z_r^E}{\ell_r^E} }_{ \substack{E \in \cE \\ 1\le r \le R_E} },
	\end{equation}
	in the sense of Definition~\ref{def : Concatenation/Decomposition}. Indeed,	let $\gamma \in \SetPDF0{\ell \beta \base 1}\ell$. Consider the decomposition~\eqref{eqn : UpperTail/Limit/Paths/Decomposition} and denote, for all $r \in \intint1R$, $\ell_r \dpe \norme{\gamma_r}$. We have
	\begin{equation}
		\sum_{r=1}^R \ell_r \le (1+2\delta)\ell.
	\end{equation}
	By Lemma~\ref{lem : UpperTail/Limit/Bound}, for all $r\in \intint1R$ there exists $y_r, z_r \in \clball{0, 3\ell}\cap \Z^d$ such that $\norme{y_r - z_r} \ge \norme{x_{r-1} - x_r}$, and $\gamma_r$ has an extension in $\SetPDC1{y_r}{z_r}{\ell_r + 8\eps \ell}$. Denote $\hat \ell_r \dpe \ceil{\ell_r + 8\eps \ell}$. It is straightforward to check that $\p{y_r, z_r, \hat \ell_r}_{1\le r \le R}$ is an eligible skeleton, thus the claim holds.

	Let $\zeta>0$. By Lemma~\ref{lem : Decomposition} we have
	\begin{equation}
		\label{eqn : UpperTail/Limit/Paths/BK}
		\Pb{\MassPDF0{\ell \beta \base 1}\ell \ge \zeta \ell} \le \sum_{E \in \cE} \sum_{\substack{ (t_r) \in \N^{R_E} \\ \zeta \ell -R_E \le \sum t_r \le \zeta \ell} } \prod_{r=1}^{R_E} \Pb{ \MassPDC1{y_{r}^E} {z_{r}^E}{ \ell_{r}^E } \ge t_r }.
	\end{equation}
	Taking the $\log$ and multiplying by $-\frac1\ell$, we get
	\begin{align}
		\nonumber
		\begin{split}
		&-\frac1\ell \log \Pb{\MassPDF0{\ell \beta \base 1}\ell \ge \zeta \ell} \\
			&\quad \ge -\frac{\Pol(\ell)}{\ell} \min_{E \in \cE} \min_{\substack{ (t_r) \in \N^{R_E} \\ \zeta \ell -R_E \le \sum t_r \le \zeta \ell} } - \sum_{r=1}^{R_E} \frac1\ell \log \Pb{ \MassPDC1{y_{r}^E} {z_{r}^E}{ \ell_{r}^E } \ge t_r },
		\end{split}
		\intertext{where $\Pol(\cdot)$ is a polynomial that only depends on $d$ and $\delta$. Now plugging in~\eqref{eqn : UpperTail/LimitDiamAntidiam/Diam} gives}
		\label{eqn : UpperTail/Limit/Paths/Inegalite_Moche}
		\begin{split}
		&-\frac1\ell \log \Pb{\MassPDF0{\ell \beta \base 1}\ell \ge \zeta \ell} \\
			&\quad \ge -\frac{\log \Pol(\ell)}{\ell}  + \min_{E \in \cE} \min_{\substack{ (t_r) \in \N^{R_E} \\ \zeta \ell -R_E \le \sum t_r \le \zeta \ell} } \sum_{r=1}^{R_E} \frac{ \ell_{r}^E }{\ell} \FdT_\rP\p{ \frac{\norme{y_{r}^E - z_{r}^E } }{ \ell_{r}^E } , \frac{ t_r }{ \ell_{r}^E } }.
		\end{split}
	\end{align}
	We aim at lower bounding the sum in the right-hand side. To so, let us fix $E\in \cE$ and $(t_r) \in \N^{R_E}$ such that $\zeta \ell - R_E \le \sum t_r \le \zeta \ell$. To simplify the notations, we drop the superscript $E$. For all $r\in \intint1R$, define
	\begin{equation}
		\theta_r \dpe \frac{ \ell_r}{ \sum_{s=1}^R  \ell_s}.
	\end{equation}
	By convexity of $\FdT$,
	\begin{align}
		\sum_{r=1}^{R} \frac{ \ell_{r} }{\ell} \FdT_\rP\p{ \frac{\norme{y_{r} - z_{r} } }{ \ell_{r} } , \frac{ t_r }{  \ell_{r} } }%
			&=  \frac{ \sum_{s=1}^{R}  \ell_{s} }{\ell} \cdot \sum_{r=1}^R \theta_r \FdT_\rP\p{ \frac{\norme{y_{r} - z_{r} } }{ \ell_{r} } , \frac{ t_r }{  \ell_{r} } }\eol
			&\ge \frac{ \sum_{s=1}^{R}  \ell_{s} }{\ell} \cdot \FdT_\rP\p{ \sum_{r=1}^R\frac{\theta_r  \norme{y_{r} - z_{r} }}{  \ell_{r} } ,  \sum_{r=1}^R \frac{ \theta_r t_r }{  \ell_{r} } } \eol
			&= \frac{ \sum_{s=1}^{R}  \ell_{s} }{\ell} \cdot \FdT_\rP\p{ \frac{ \sum_{r=1}^R \norme{y_{r} - z_{r} }}{ \sum_{s=1}^R  \ell_{s} } ,  \frac{ \sum_{r=1}^R t_r }{ \sum_{s=1}^R  \ell_{s} } }.\nonumber
		\intertext{Using~\eqref{eqn : UpperTail/Limit/Eligible1},~\eqref{eqn : UpperTail/Limit/Eligible3} and the lower bound on $\sum t_r$, since $\FdT$ is nondecreasing on $\intervallefo01 \times \intervalleoo0\infty$, }
		\sum_{r=1}^{R} \frac{ \ell_{r} }{\ell} \FdT_\rP\p{ \frac{\norme{y_{r} - z_{r} } }{ \ell_{r} } , \frac{ t_r }{  \ell_{r} } }%
			&\ge \frac{ \sum_{s=1}^{R}  \ell_{s} }{\ell} \cdot \FdT_\rP\p{ \frac{ \beta \ell }{ \sum_{s=1}^R  \ell_{s} } ,  \frac{ (\zeta - \eps)\ell }{ \sum_{s=1}^R  \ell_{s} } }.\nonumber
		\intertext{Using~\eqref{eqn : UpperTail/Limit/Eligible4} and the convexity of $\FdT$ again leads to }
		\sum_{r=1}^{R} \frac{ \ell_{r} }{\ell} \FdT_\rP\p{ \frac{\norme{y_{r} - z_{r} } }{ \ell_{r} } , \frac{ t_r }{  \ell_{r} } }
			&\ge (1+9R_0 \eps +2\delta)\FdT_\rP\p{ \frac{ \beta }{ 1+9R_0 \eps +2\delta } ,  \frac{ \zeta - \eps }{ 1+9R_0 \eps +2\delta } }.
		\label{eqn : UpperTail/Limit/Paths/Inegalite_Moche/LB_terme_general}
	\end{align}
	Note that the right-hand side in \eqref{eqn : UpperTail/Limit/Paths/Inegalite_Moche/LB_terme_general} does not depend on $E \in \cE$, thus plugging it into~\eqref{eqn : UpperTail/Limit/Paths/Inegalite_Moche} and letting $\ell \to \infty$ provides
	\begin{align}
		\liminf_{\ell \to \infty} -\frac1\ell \log \Pb{\MassPDF0{\ell \beta \base 1}\ell \ge \zeta \ell} &\ge (1+9R_0 \eps+ 2 \delta)\FdT_\rP\p{ \frac{ \beta }{ 1+9R_0 \eps + 2 \delta} ,  \frac{ \zeta - \eps }{ 1+9R_0 \eps + 2 \delta} }.\nonumber
		\intertext{Since $\FdT$ is continuous, setting $\delta \dpe \sqrt \eps$ and letting $\eps \to 0$ gives }
		\liminf_{\ell \to \infty} -\frac1\ell \log \Pb{\MassPDF0{\ell \beta \base 1}\ell \ge \zeta \ell} &\ge \FdT_\rP\p{ \beta ,  \zeta }.
		\intertext{The converse inequality }
		\limsup_{\ell \to \infty} -\frac1\ell \log \Pb{\MassPDF0{\ell \beta \base 1}\ell \ge \zeta \ell} &\le \FdT_\rP\p{ \beta ,  \zeta }
	\end{align}
	being straightforward, this concludes the proof of~\eqref{eqn : UpperTail/LimitFree}.
\end{proof}


\subsection{Order of the upper-tail large deviations}
\label{subsec : UpperTail/OdG}

In this section we prove Proposition~\ref{prop : UpperTail/OdG}. In the cases where we have access to concentration inequalities, i.e.\ when $\nu$ has a bounded support, and $\rG = \rP$ or $\rA^{(q)}$ for some $q \in \intervallefo0\infty$, it is a clear consequence of Theorem~\ref{thm : BOUNDS}. We get to the general cases by standard truncation arguments. The unbounded term will be handled thanks to Lemma~\ref{lem : UpperTail/OdG/Unbounded}. Given $m>0$, we define $\nu_1^{(m)}$ and $\nu_2^{(m)}$ as the respective images of $\nu$ by
\begin{alignat*}{2}
	f_1 : \intervalleoo0\infty &\longrightarrow \intervalleof0m &\quad \text{  and  }\quad  f_2 : \intervalleoo0\infty &\longrightarrow \intervallefo0\infty \\
	 t &\longmapsto t\wedge m & t &\longmapsto (t- m)^+.
\end{alignat*}
Similarly we define $\pro_1^{(m)}$ and $\pro_2^{(m)}$ as the respective images of $\pro$ by
\begin{alignat*}{2}
	\overline f_1 : \R^d \times \intervalleoo0\infty &\longrightarrow \R^d \times \intervalleof0m &\quad \text{  and  }\quad  \overline f_2 : \R^d \times \intervalleoo0\infty &\longrightarrow \R^d \times \intervallefo0\infty \\
	 (x,t) &\longmapsto (x,t\wedge m) & (x,t) &\longmapsto (x,(t- m)^+).
\end{alignat*}

By the so-called mapping theorem (see e.g.\ Theorem 5.1 in \cite{LastPenrose}), $\pro_1^{(m)}$ and $\pro_2^{(m)}$ are Poisson point processes with respective intensities $\Leb\otimes \nu_1^{(m)}$ and $\Leb\otimes \nu_2^{(m)}$.
\begin{Lemma}
	\label{lem : UpperTail/OdG/Unbounded}
	For all $\eps>0$, there exists $m>0$ such that
	\begin{equation}
		\label{eqn : UpperTail/OdG/Unbounded}
		\liminf_{\ell \to \infty} -\frac1\ell \log\Pb{ \MassAUF \ell\cro{\pro_2^{(m)}} \ge \eps \ell} >0.
	\end{equation}
\end{Lemma}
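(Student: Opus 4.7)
I would split the mark distribution at a level $M$: let $\pro^{\flat}:=\restriction{\pro_2^{(m)}}{\R^d\times\intervalleof0M}$ and $\pro^{\sharp}:=\restriction{\pro_2^{(m)}}{\R^d\times\intervalleoo M\infty}$, two independent Poisson point processes. By additivity of mass over atoms,
$$\MassAUF{\ell}[\pro_2^{(m)}]\le\MassAUF{\ell}[\pro^{\flat}]+\MassAUF{\ell}[\pro^{\sharp}],$$
so a union bound reduces the lemma to establishing exponential decay in $\ell$ of $\Pb{\MassAUF{\ell}[\pro^{\flat}]\ge\eps\ell/2}$ and $\Pb{\MassAUF{\ell}[\pro^{\sharp}]\ge\eps\ell/2}$.

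The bounded piece is straightforward: since the marks of $\pro^{\flat}$ lie in $\intervalleof0M$, Proposition~\ref{prop : Concentration/Unpen} applies directly. The general upper bound~\eqref{eqn : Intro/LLN_Animal/GeneralUB} applied to $\nu_2^{(m)}|_{\intervalleof0M}$ gives
$$\E{\MassAUF{\ell}[\pro^{\flat}]}/\ell\le\Cr{GREEDY}\int_m^\infty\nu([s,\infty))^{1/d}\,\d s,$$
which vanishes as $m\to\infty$ by~\eqref{ass : ExpMoment}. Fixing, say, $M=1$ and choosing $m$ large enough that this expectation lies below $\eps/4$, Proposition~\ref{prop : Concentration/Unpen} delivers an exponential-in-$\ell$ tail bound at rate $h(1)\,\eps/(4M)$.

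For the unbounded piece I would use a Chernoff-like argument that exploits the sparsity of $\pro^{\sharp}$: its spatial intensity is $q:=\nu(\intervalleoo{M+m}\infty)$, which is exponentially small in $M+m$ by~\eqref{ass : ExpMoment}. Reducing to paths via $\MassAUF{\ell}\le\MassPUF{2\ell}$ from~\eqref{eqn : Intro/Formal/BorneGrossiere}, and using that any path of length at most $2\ell$ is a connected subset of $\R^d$ of one-dimensional Hausdorff measure at most $2\ell$---so it meets at most $O(\ell)$ unit cubes of $\Z^d$---one obtains
$$\MassPUF{2\ell}[\pro^{\sharp}]\le\max_V\sum_{z\in V}Y_z,\qquad Y_z:=\Mass{z+[0,1)^d}[\pro^{\sharp}],$$
where $V$ ranges over the admissible cube-sets (of cardinality $O(\ell)$ containing $0$). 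The $(Y_z)_{z\in\Z^d}$ are i.i.d.\ with moment generating function $\exp\bigl(\int_{M+m}^\infty(\mathrm e^{\lambda(t-m)}-1)\,\d\nu(t)\bigr)$, which tends to $1$ as $m\to\infty$ for every $\lambda\in\intervalleoo0{\lambda_0}$, where $\lambda_0$ is the exponential-moment parameter of~\eqref{ass : ExpMoment}. A Chernoff bound on each $V$ combined with a union bound over $V$ then yields the desired exponential decay.

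The principal obstacle is the combinatorial entropy of the cube-sets, which contributes a factor $\mathrm e^{\Theta(\ell)}$ to the union bound; since the Chernoff exponent is capped by $\lambda\le\lambda_0$, a naive implementation only handles $\eps$ above a positive threshold. To accommodate arbitrarily small $\eps>0$ one must exploit the sparsity of $\pro^{\sharp}$ more finely---for instance, a minimum-spanning-tree estimate shows that a path visiting $k$ Poisson atoms of density $q$ must have length $\Omega(k q^{-1/d})$, so the admissible $V$ really have cardinality $O(\ell q^{1/d})$ and the effective entropy shrinks with $m$---or iterate the truncation dyadically over $(2^kM,2^{k+1}M]$ and apply Proposition~\ref{prop : Concentration/Unpen} at each level with allowances tailored to the super-exponential decay in $k$ of the per-level expected mass.
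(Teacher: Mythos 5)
Your architecture --- truncate, handle the bounded range by concentration, the high range by a Chernoff-type bound over a lattice covering --- is sound in outline, and your treatment of the bounded piece (via \eqref{eqn : Intro/LLN_Animal/GeneralUB} and Proposition~\ref{prop : Concentration/Unpen}) is correct. But the entire content of the lemma is the high-mark piece for \emph{arbitrarily small} $\eps$, and there your plan has a genuine gap, which you yourself flag: a Chernoff bound whose tilt is capped by the exponential-moment threshold, set against the $\mathrm e^{\Theta(\ell)}$ entropy of lattice animals of cardinal $O(\ell)$, only yields \eqref{eqn : UpperTail/OdG/Unbounded} for $\eps$ above a fixed positive threshold, and neither of your two proposed repairs is carried out or would work as stated. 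The sparsity fix fails on two counts: the asserted bound that a path visiting $k$ atoms of intensity $q$ has length $\Omega(k q^{-1/d})$ is false as a deterministic statement (atoms can cluster in a small ball), so it is itself a greedy-type upper-tail estimate requiring its own exponentially small error probability; and even granting it, the union bound must still run either over subsets of $\clball{0,C\ell}\cap\Z^d$ of cardinal $\alpha\ell$ (entropy $\exp(\alpha\ell\,\Theta(\log\ell))$) or over the \emph{containing} animals, whose entropy is $\mathrm e^{c_d\ell}$ no matter how few high atoms they visit --- so the threshold in $\eps$ does not go away. The dyadic fix is also insufficient as sketched: a scale-by-scale union bound with allowances summing to $\eps\ell$ involves $\Theta(\log\ell)$ relevant scales, so some scale receives an allowance $o(\ell)$ and its deviation probability is only $\exp(-o(\ell))$; making this work requires treating the top scales by a global estimate without the animal constraint, the bottom scales by counting or concentration estimates, and managing the crossover --- a genuinely multi-scale argument, not a one-line adjustment.

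That multi-scale argument is exactly what the paper does not redo: its proof covers any animal in $\SetAUF\ell$ by a lattice animal of at most $C\ell$ unit cubes containing $0$, sets $X_v:=\Mass{v+\intervalleff01^d}$, notes that $\E{\mathrm e^{\lambda X_0}}<\infty$ by the Poisson exponential formula, uses the pointwise bound $X_v\cro{\pro_2^{(m)}}\le (X_v-m)^+$, and then invokes Lemma~4.2 of Dembo--Gandolfi--Kesten \cite{Dem01}, which states precisely that for an i.i.d.\ family with a finite exponential moment one may choose $m$ so that the maximal $(X_v-m)^+$-mass over lattice animals of cardinal $O(\ell)$ exceeds $\eps\ell$ with probability at most $\exp(-c\ell)$, for \emph{every} $\eps>0$. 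To make your proposal complete you would need either to cite such a result or to reproduce an argument of that type; the single Chernoff-plus-union-bound step in your plan cannot replace it, and the extra split at the intermediate level $M$ is then unnecessary.
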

\begin{proof}
	Let $\lambda >0$ be such that~\eqref{ass : ExpMoment} holds and $m>0$. For all $v\in \Z^d$, we define \[ X_v \dpe \Mass{v +\intervalleff01^d}.\] There exists a constant $\Cl{ContinuousVSLattice}>0$ such that for large enough $\ell$, for all $\xi \in \SetAUF \ell$, there exists a \emph{lattice} animal $\xi'$ such that 
	\begin{equation}
		\label{eqn : UpperTail/OdG/Unbounded/ContVSLatt}
		0\in \xi', \quad \#\xi' \le \Cr{ContinuousVSLattice}\ell \text{ and } \xi \subseteq \bigcup_{v\in \xi'} \p{v+ \intervallefo01^d}. 
	\end{equation}
	Furthermore, by Proposition 2.1.4 in \cite{Bac20},
	\begin{equation}
		\E{ \exp\p{\lambda X_0 } } =  \exp\p{ \int_{\intervallefo0\infty}\p{\mathrm e^{\lambda t} -1 }\nu(\d t) } < \infty.
	\end{equation}
	The family $(X_v)_{v\in \Z^d}$ is i.i.d, therefore by Lemma~4.2 in \cite{Dem01}, there exists $m>0$ such that
	\[ \liminf_{\ell \to \infty}-\frac1\ell \log \Pb{  \begin{array}{c} \text{ There exists a lattice animal $\xi'$ containing $0$, with cardinal}\\ \text{at most $\Cr{ContinuousVSLattice}\ell$, such that}  \sum_{v \in \xi'} \p{ X_v - m}^+ \ge \eps \ell  \end{array}}>0,  \]
	thus by the straightforward bound
	\[
	X_v\cro{\pro_2^{(m)} } \le \p{X_v - m}^+,
	\]
	there exists $m>0$ such that
	\begin{equation}
		\label{eqn : UpperTail/OdG/Unbounded/DGK}
		\liminf_{\ell \to \infty}-\frac1\ell \log \Pb{  \begin{array}{c} \text{ There exists a lattice animal $\xi'$ containing $0$, with cardinal}\\ \text{at most $\Cr{ContinuousVSLattice}\ell$, such that }  \sum_{v \in \xi'} X_v\cro{\pro_2^{(m)}} \ge \eps \ell \end{array} }>0.
	\end{equation}

	Combining~\eqref{eqn : UpperTail/OdG/Unbounded/ContVSLatt} and~\eqref{eqn : UpperTail/OdG/Unbounded/DGK} concludes the proof.
\end{proof}

\begin{proof}[Proof of Proposition~\ref{prop : UpperTail/OdG}]

Let $\zeta> \LimMassG(\beta)$ and $0 < \eps < \zeta - \LimMassG(\beta)$.

\emph{Case 1: $\rG = \rP$ or $\rA^{(q)}$, for some $q \in \intervallefo0\infty$.}
Let $m>0$ be such that~\eqref{eqn : UpperTail/OdG/Unbounded} holds. Let $\ell>0$. By union bound,
\begin{equation*}
	\Pb{\MassGDF0{\ell \beta \base 1}\ell \ge \zeta \ell } %
		\le \Pb{ \MassGDF0{\ell \beta \base 1}\ell\cro{\pro_1^{(m)}} \ge (\zeta-\eps) \ell } + \Pb{ \MassAUF\ell\cro{\pro_2^{(m)}} \ge \eps \ell }
\end{equation*}
Consequently,
\begin{equation*}
	\FdT(\beta, \zeta) \ge \min\p{ \FdT(\beta, \zeta - \eps)\cro{\nu_1^{(m)}}, \liminf_{\ell \to \infty}-\frac1\ell \log \Pb{ \MassAUF\ell\cro{\pro_2^{(m)}} \ge \eps \ell } }.
\end{equation*}
The first term is positive by Theorem~\ref{thm : BOUNDS} and the second is positive by definition of $m$, thus $\FdT(\beta, \zeta)>0$.

\emph{Case 2: $\rG = \rA^{(\infty).}$}
By Theorem 1.7 in \cite{Article3} there exists $q\in \intervallefo0\infty$ such that \[ \zeta > \LimMassA^{(q)}(\beta). \]
Furthermore, for all $\ell >0$,
\begin{equation}
	\acc{ \MassADFpen0{\ell \beta \base 1}\ell \infty\ge \zeta \ell} \subseteq \acc{ \MassADFpen0{\ell \beta \base 1}\ell q \ge \zeta \ell},
\end{equation}
thus we can conclude with Case 1.
\end{proof}


\appendix
\section{Large deviation principle at speed $n$}
\label{appsec : LDP}
In this section we assume that~\eqref{ass : ExpMoment} holds, fix $\beta \in \intervallefo01$ and prove Corollary~\ref{cor : LDP}. Our strategy consists in proving that $\frac1\ell \MassGDF{0}{\ell \beta \base 1}{\ell}$ follows the so-called \emph{weak LDP}, then checking that is it \emph{exponentially tight} to show the full LDP.

\begin{Lemma}
	\label{lem : LDP/weakLDP}
	The process $\p{ \frac1\ell \MassGDF{0}{\ell \beta \base 1}{\ell} }_{\ell >0}$ follows the weak LDP at speed $n$ with the rate function $\FdT^*(\beta, \cdot)$, i.e.\
	\begin{enumerate}[(i)]
		\item For all compact subsets $K\subseteq \intervallefo0\infty$, \[ \liminf_{\ell \to \infty}-\frac1\ell \log\Pb{ \frac1\ell \MassGDF{0}{\ell \beta \base 1}{\ell} \in K} \ge \inf_{\zeta \in K} \FdT^*(\beta, \zeta). \]
		\item For all open subsets $U\subseteq \intervallefo0\infty$, \[ \liminf_{\ell \to \infty}-\frac1\ell \log\Pb{ \frac1\ell \MassGDF{0}{\ell \beta \base 1}{\ell} \in U} \le \inf_{\zeta \in U} \FdT^*(\beta, \zeta). \]
	\end{enumerate}
\end{Lemma}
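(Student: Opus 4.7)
The plan is to establish the weak LDP stated in Lemma~\ref{lem : LDP/weakLDP}, then to verify exponential tightness and combine the two to obtain the full LDP of Corollary~\ref{cor : LDP}. I write $V_\ell \dpe \frac1\ell\MassGDF{0}{\ell \beta \base 1}{\ell}$ throughout.

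For the compact-set upper bound, fix a compact $K \subseteq \intervallefo0\infty$. If $\LimMassG(\beta) \in K$ then $\inf_{\zeta \in K} \FdT^*(\beta, \zeta) = 0$ by Theorem~\ref{thm : MAIN/UpperTail}\eqref{item : MAIN/UpperTail/OdG} and the inequality is trivial. Otherwise, compactness provides $\eta > 0$ and $M < \infty$ with $K \subseteq \intervalleff{0}{\LimMassG(\beta) - \eta} \cup \intervalleff{\LimMassG(\beta) + \eta}{M}$, and a union bound yields
\begin{equation*}
	\Pb{V_\ell \in K} \le \Pb{V_\ell \le \LimMassG(\beta) - \eta} + \Pb{V_\ell \ge \zeta^-},
\end{equation*}
with $\zeta^- \dpe \inf\p{ K \cap \intervalleff{\LimMassG(\beta) + \eta}{M} }$ (and the second term absent if this set is empty). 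Theorem~\ref{thm : MAIN/LowerTail} bounds the first term by $\exp(-c\ell^d)$, negligible at speed $\ell$; Proposition~\ref{prop : UpperTail/Limit} bounds the second by $\exp(-\ell(\FdT(\beta, \zeta^-) + o(1)))$. Since $\FdT^*$ is infinite below $\LimMassG(\beta)$ and $\FdT(\beta, \cdot)$ is nondecreasing, $\inf_{\zeta \in K} \FdT^*(\beta, \zeta) = \FdT(\beta, \zeta^-)$, yielding the required lower bound on $\liminf_\ell -\frac1\ell \log \Pb{V_\ell \in K}$.

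For the open-set bound, fix an open $U$; we may assume $\inf_{\zeta \in U} \FdT^*(\beta, \zeta) < \infty$, else the inequality is trivial. Given $\epsilon > 0$, choose $\zeta_0 \in U$ with $\FdT^*(\beta, \zeta_0) < \inf_{\zeta \in U} \FdT^*(\beta, \zeta) + \epsilon$; necessarily $\zeta_0 \ge \LimMassG(\beta)$. If $\zeta_0 = \LimMassG(\beta)$, the LLN (Theorem~\ref{thm : Intro/LLN}) gives $\Pb{V_\ell \in U} \to 1$, hence the conclusion. Otherwise, choose $\delta > 0$ with $\intervalleoo{\zeta_0 - \delta}{\zeta_0 + \delta} \subseteq U$ and $\zeta_0 - \delta > \LimMassG(\beta)$; then
\begin{equation*}
	\Pb{V_\ell \in U} \ge \Pb{V_\ell \ge \zeta_0 - \delta} - \Pb{V_\ell \ge \zeta_0 + \delta}.
\end{equation*}
By Proposition~\ref{prop : UpperTail/Limit} and the strict monotonicity in Theorem~\ref{thm : MAIN/UpperTail}\eqref{item : MAIN/UpperTail/Limit}, the second term is exponentially negligible compared to the first, so $-\frac1\ell \log \Pb{V_\ell \in U}$ is eventually bounded above by $\FdT(\beta, \zeta_0 - \delta) + o(1)$. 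Sending $\delta \to 0$ via continuity of $\FdT$ (Theorem~\ref{thm : MAIN/UpperTail}\eqref{item : MAIN/UpperTail/ConvexNondec}) and then $\epsilon \to 0$ completes the proof of the weak LDP.

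Exponential tightness follows from taking $K_\alpha \dpe \intervalleff{0}{M_\alpha}$ with $M_\alpha$ large enough that $\FdT(\beta, M_\alpha) > \alpha$ (possible by Theorem~\ref{thm : MAIN/UpperTail}\eqref{item : MAIN/UpperTail/Limit}): Proposition~\ref{prop : UpperTail/Limit} yields $\liminf_\ell -\frac1\ell \log \Pb{V_\ell \notin K_\alpha} \ge \FdT(\beta, M_\alpha) > \alpha$. A standard argument (weak LDP plus exponential tightness) then upgrades to the full LDP. The undirected version is identical, substituting~\eqref{eqn : UpperTail/LimitFree} for~\eqref{eqn : UpperTail/Limit} and~\eqref{eqn : MAIN/LowerTail/Undirected} for~\eqref{eqn : MAIN/LowerTail/Directed}. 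I expect no serious obstacle; the main subtleties are the boundary cases where $\LimMassG(\beta)$ lies in $K$ or $U$, which are resolved by the vanishing of $\FdT^*$ at $\LimMassG(\beta)$.
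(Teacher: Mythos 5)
Your argument is correct, and it is built from exactly the same three ingredients as the paper's proof: Theorem~\ref{thm : MAIN/LowerTail} (speed $\ell^d$, hence rate $+\infty$ at speed $\ell$, below $\LimMassG(\beta)$), the law of large numbers~\eqref{eqn : Intro/LLN/Directed} at the point $\LimMassG(\beta)$, and Proposition~\ref{prop : UpperTail/Limit} combined with the strict monotonicity and continuity of $\FdT(\beta,\cdot)$ above $\LimMassG(\beta)$. The organization, however, is genuinely different. The paper never verifies the compact and open bounds directly: it computes the local rates $\overline I(\zeta)$ and $\underline I(\zeta)$ of $\Pb{\frac1\ell \MassGDF{0}{\ell \beta \base 1}{\ell}\in\intervalleff{\zeta-\eps}{\zeta+\eps}}$ in the same three cases ($\zeta<\LimMassG(\beta)$, $\zeta=\LimMassG(\beta)$, $\zeta>\LimMassG(\beta)$) and then invokes Theorem~4.1.11 of \cite{LDTA}, which converts the pointwise identity $\overline I=\underline I=\FdT^*(\beta,\cdot)$ into the weak LDP. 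You instead prove the two bounds by hand: for compact $K$ you split $K$ around $\LimMassG(\beta)$, use a union bound, and identify $\inf_K\FdT^*(\beta,\cdot)=\FdT(\beta,\zeta^-)$ via monotonicity; for open $U$ you reproduce what is essentially the paper's Case~3, namely the subtraction $\Pb{V_\ell\ge\zeta_0-\delta}-\Pb{V_\ell\ge\zeta_0+\delta}$ made effective by the strict increase of $\FdT(\beta,\cdot)$ on $\intervallefo{\LimMassG(\beta)}{\infty}$. What your route buys is self-containedness (no black-box weak-LDP criterion); what the paper's buys is brevity, since the covering and infimum bookkeeping over $K$ is absorbed into the cited theorem. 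One cosmetic repair: with only $\intervalleoo{\zeta_0-\delta}{\zeta_0+\delta}\subseteq U$, the event $\acc{V_\ell\ge\zeta_0-\delta}\setminus\acc{V_\ell\ge\zeta_0+\delta}$ contains $\acc{V_\ell=\zeta_0-\delta}$, and $\zeta_0-\delta$ need not lie in $U$; simply run the subtraction with a slightly smaller $\delta'<\delta$, so that the closed interval $\intervalleff{\zeta_0-\delta'}{\zeta_0+\delta'}$ sits inside $U$. Your final two steps (exponential tightness via Theorem~\ref{thm : MAIN/UpperTail}\eqref{item : MAIN/UpperTail/Limit}, and the upgrade to the full LDP) are not part of this lemma; they coincide with Lemma~\ref{lem : LDP/Tight} and the proof of Corollary~\ref{cor : LDP} and are fine as stated.
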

\begin{proof}
	For all $\zeta \in \intervallefo0\infty$, we define
	\begin{align}
		\overline I(\zeta) &\dpe \lim_{\eps \to 0} \limsup_{\ell \to \infty} -\frac1\ell \log \Pb{ \frac1\ell \MassGDF{0}{\ell \beta \base 1}{\ell} \in \intervalleff{\zeta - \eps}{\zeta + \eps} }
		\intertext{and}
		\underline I(\zeta) &\dpe \lim_{\eps \to 0} \liminf_{\ell \to \infty} -\frac1\ell \log \Pb{ \frac1\ell \MassGDF{0}{\ell \beta \base 1}{\ell} \in \intervalleff{\zeta - \eps}{\zeta + \eps} }.
	\end{align}
	Note that both limits exist by monotonicity. It is sufficient by Theorem 4.1.11 in \cite{LDTA} to check that for all $\zeta \in \intervallefo0\infty$, 
	\begin{equation} \label{eqn : LDP/weakLDP/Equality} \overline I = \underline I = \FdT^*(\beta, \cdot).  \end{equation}
	Let $\zeta \in \intervallefo0\infty$. We distinguish three cases, according to the relative position of $\zeta$ and $\LimMassG(\beta)$.

	\emph{Case 1: $\zeta < \LimMassG(\beta)$.} Let $\eps>0$ be small enough so that $\zeta + \eps < \LimMassG(\beta)$. Then by the inclusion
	\[
		\acc{ \frac1\ell \MassGDF{0}{\ell \beta \base 1}{\ell} \in \intervalleff{\zeta - \eps}{\zeta + \eps} } \subseteq \acc{\MassGDF{0}{\ell \beta \base 1}\ell \le (\zeta+\eps) \ell }
	\]
	and Theorem~\ref{thm : MAIN/LowerTail}, $\overline I (\zeta) = \underline I(\zeta) = \infty$, thus~\eqref{eqn : LDP/weakLDP/Equality}.

	\emph{Case 2: $\zeta = \LimMassG(\beta)$.} By~\eqref{eqn : Intro/LLN/Directed}, for all $\eps > 0$, \[ \lim_{\ell \to \infty} \Pb{ \frac1\ell \MassGDF{0}{\ell \beta \base 1}{\ell} \in \intervalleff{\zeta - \eps}{\zeta + \eps} } = 1. \] Consequently, $\overline I (\zeta) = \underline I(\zeta) = 0$, thus~\eqref{eqn : LDP/weakLDP/Equality}.

	\emph{Case 3: $\zeta > \LimMassG(\beta)$.} Let $\eps>0$ be small enough so that $\zeta - \eps > \LimMassG(\beta)$. We have
	\[
		\acc{ \frac1\ell \MassGDF{0}{\ell \beta \base 1}{\ell} \in \intervalleff{\zeta - \eps}{\zeta + \eps} } = \acc{\MassGDF{0}{\ell \beta \base 1}\ell \ge (\zeta-\eps) \ell } \setminus \acc{\MassGDF{0}{\ell \beta \base 1}\ell > (\zeta+\eps) \ell }.
	\]
	Moreover, by strict monotonicity of $\FdT(\beta, \cdot)$ on $\intervallefo{\LimMassG(\beta)}{\infty}$ (see Theorem~\ref{thm : MAIN/UpperTail}\eqref{item : MAIN/UpperTail/Limit}), $\FdT(\beta, \zeta-\eps) < \FdT(\beta, \zeta+\eps)$, therefore by~\eqref{eqn : MAIN/UpperTail/Directed},
	\[ \Pb{\MassGDF{0}{\ell \beta \base 1}\ell \ge (\zeta-\eps) \ell } \gg \Pb{\MassGDF{0}{\ell \beta \base 1}\ell \ge (\zeta+\eps) \ell }  \text{ as } \ell \to \infty.\]
	Hence we have
	\begin{equation}
		\label{eqn : LDP/weakLDP/LB}
		\lim_{\ell \to \infty} -\frac1\ell \log \Pb{ \frac1\ell \MassGDF{0}{\ell \beta \base 1}{\ell} \in \intervalleff{\zeta - \eps}{\zeta + \eps} } = \FdT(\beta, \zeta -\eps).
	\end{equation}
	Letting $\eps \to 0$ and using the continuity of $\FdT$, we get $\overline I (\zeta) = \underline I(\zeta) = \FdT(\beta, \zeta)$, thus~\eqref{eqn : LDP/weakLDP/Equality}.
\end{proof}
\begin{Lemma}
	\label{lem : LDP/Tight}
	The process $\p{ \frac1\ell \MassGDF{0}{\ell \beta \base 1}{\ell} }_{\ell >0}$ is exponentially tight, i.e.\ for all $\alpha >0$, there exists a compact subset $K\subseteq \intervallefo0\infty$ such that
	\[ \liminf_{\ell \to \infty}-\frac1\ell \log\Pb{ \frac1\ell \MassGDF{0}{\ell \beta \base 1}{\ell} \in K^\mathrm{c}} \ge \alpha. \]
\end{Lemma}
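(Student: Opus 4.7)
The plan is to take $K$ to be a closed interval of the form $[0,M]$ for some large $M = M(\alpha)$, whose existence is guaranteed by the divergence of the rate function at infinity (Theorem~\ref{thm : MAIN/UpperTail}\eqref{item : MAIN/UpperTail/Limit}). Since $\MassGDF{0}{\ell\beta\base 1}{\ell}$ takes values in $[0,\infty)$, any such $K$ is compact in $\intervallefo0\infty$ and the complement event reduces to an upper-tail event, for which we already have sharp control.

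More precisely, first I would fix $\alpha > 0$. By Theorem~\ref{thm : MAIN/UpperTail}\eqref{item : MAIN/UpperTail/Limit}, since $\FdT(\beta, \zeta) \to \infty$ as $\zeta \to \infty$, there exists $M > \LimMassG(\beta)$ such that $\FdT(\beta, M) > \alpha$. Set $K \dpe \intervalleff{0}{M}$, which is compact in $\intervallefo{0}{\infty}$. Then
\begin{equation*}
    \Pb{ \frac{1}{\ell}\MassGDF{0}{\ell \beta \base 1}{\ell} \in K^\mathrm{c} }
    = \Pb{ \MassGDF{0}{\ell \beta \base 1}{\ell} > M \ell }
    \le \Pb{ \MassGDF{0}{\ell \beta \base 1}{\ell} \ge M \ell }.
\end{equation*}
Taking $-\frac{1}{\ell}\log$ and letting $\ell \to \infty$, by~\eqref{eqn : UpperTail/Limit} from Proposition~\ref{prop : UpperTail/Limit},
\begin{equation*}
    \liminf_{\ell \to \infty} -\frac{1}{\ell}\log \Pb{ \frac{1}{\ell}\MassGDF{0}{\ell \beta \base 1}{\ell} \in K^\mathrm{c} }
    \ge \FdT(\beta, M) > \alpha,
\end{equation*}
which is the desired exponential tightness.

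There is no real obstacle here: the work has already been done in Theorem~\ref{thm : MAIN/UpperTail}, and exponential tightness is a direct consequence of the rate function being proper (blowing up at infinity). The only minor point worth noting is that $\intervallefo0\infty$ is a closed half-line, so compactness of $K$ reduces to boundedness from above, making the complement a pure upper-tail event and obviating any need to control a lower-tail piece here.
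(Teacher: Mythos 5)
Your proof is correct and follows essentially the same route as the paper: choose $K=\intervalleff{0}{M}$ with $M$ large enough that the upper-tail rate function exceeds $\alpha$ (using Theorem~\ref{thm : MAIN/UpperTail}\eqref{item : MAIN/UpperTail/Limit}), then bound the complement event via the upper-tail limit~\eqref{eqn : MAIN/UpperTail/Directed}. No gaps; the citation of Proposition~\ref{prop : UpperTail/Limit} instead of~\eqref{eqn : MAIN/UpperTail/Directed} is an immaterial difference.
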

\begin{proof}
	Let $\alpha>0$. By Theorem \ref{thm : MAIN/UpperTail}\eqref{item : MAIN/UpperTail/Limit}, there exists $\zeta \in \intervalleoo0\infty$ such that $\FdT(\beta, \zeta) \ge \alpha$. Let $K \dpe \intervalleff0\zeta$. By~\eqref{eqn : MAIN/UpperTail/Directed} we have
	\[ \liminf_{\ell \to \infty}-\frac1\ell \log\Pb{ \frac1\ell \MassGDF{0}{\ell \beta \base 1}{\ell} \in K^\mathrm{c}} \ge \FdT(\beta, \zeta) \ge  \alpha.\]
\end{proof}
\begin{proof}[Proof of Corollary~\ref{cor : LDP}]
	By Lemma 1.2.18 in \cite{LDTA} and the remark below, a exponentially tight process following the weak LDP with a given rate function actually follows the LDP with this rate function. Lemma~\ref{lem : LDP/weakLDP} and \ref{lem : LDP/Tight} state that it is the case for $\frac1\ell \MassGDF{0}{\ell \beta \base 1}{ \ell}$, with the rate function $\FdT^*(\beta, \cdot)$.
\end{proof}

\newpage
\bibliographystyle{plainurl}
\bibliography{biblio}

\begin{thebibliography}{10}

\bibitem{Bac20}
Fran{\c c}ois Baccelli, Bartlomiej Blaszczyszyn, and Mohamed Karray.
\newblock {\em {Random Measures, Point Processes, and Stochastic Geometry}}.
\newblock {Inria}, January 2020.
\newblock URL: \url{https://inria.hal.science/hal-02460214}.

\bibitem{Bai99}
Jinho Baik, Percy Deift, and Kurt Johansson.
\newblock On the distribution of the length of the longest increasing
  subsequence of random permutations.
\newblock {\em J. Am. Math. Soc.}, 12(4):1119--1178, 1999.
\newblock \href {https://doi.org/10.1090/S0894-0347-99-00307-0}
  {\path{doi:10.1090/S0894-0347-99-00307-0}}.

\bibitem{Bas21}
Riddhipratim Basu, Allan Sly, and Shirshendu Ganguly.
\newblock Upper tail large deviations in first passage percolation.
\newblock {\em Communications on Pure and Applied Mathematics},
  74(8):1577--1640, 2021.
\newblock \href {https://doi.org/10.1002/cpa.22010}
  {\path{doi:10.1002/cpa.22010}}.

\bibitem{vdBer96}
J.~Van~Den Berg.
\newblock A note on disjoint-occurrence inequalities for marked poisson point
  processes.
\newblock {\em Journal of Applied Probability}, 33(2):420--426, 1996.

\bibitem{BLM}
Stéphane Boucheron, Gábor Lugosi, and Pascal Massart.
\newblock {\em {Concentration Inequalities: A Nonasymptotic Theory of
  Independence}}.
\newblock Oxford University Press, 02 2013.
\newblock \href {https://doi.org/10.1093/acprof:oso/9780199535255.001.0001}
  {\path{doi:10.1093/acprof:oso/9780199535255.001.0001}}.

\bibitem{CT11b}
Rapha{\"e}l Cerf and Marie Th{\'e}ret.
\newblock {Lower large deviations for the maximal flow through a domain of
  $\mathbb R^d$ in first passage percolation}.
\newblock {\em Probability Theory and Related Fields}, 150(3):635--661, Aug
  2011.
\newblock \href {https://doi.org/10.1007/s00440-010-0287-6}
  {\path{doi:10.1007/s00440-010-0287-6}}.

\bibitem{CT11c}
Rapha{\"e}l Cerf and Marie Th{\'e}ret.
\newblock {Upper large deviations for the maximal flow through a domain of
  $\mathbb R^d$ in first passage percolation}.
\newblock {\em The Annals of Applied Probability}, 21(6):2075 -- 2108, 2011.
\newblock \href {https://doi.org/10.1214/10-AAP732}
  {\path{doi:10.1214/10-AAP732}}.

\bibitem{Cox93}
J.~Cox, Alberto Gandolfi, Philip Griffin, and Harry Kesten.
\newblock Greedy lattice animals i: Upper bounds.
\newblock {\em The Annals of Applied Probability}, 3, 11 1993.
\newblock \href {https://doi.org/10.1214/aoap/1177005277}
  {\path{doi:10.1214/aoap/1177005277}}.

\bibitem{Dem21+_Surface}
Barbara Dembin and Marie Théret.
\newblock Large deviation principle for the cutsets and lower large deviation
  principle for the maximal flow in first passage percolation, 2021.
\newblock URL: \url{https://arxiv.org/abs/2102.11601}, \href
  {https://arxiv.org/abs/2102.11601} {\path{arXiv:2102.11601}}.

\bibitem{Dem21+_Volume}
Barbara Dembin and Marie Théret.
\newblock Large deviation principle for the streams and the maximal flow in
  first passage percolation, 2021.
\newblock URL: \url{https://arxiv.org/abs/2010.05526}, \href
  {https://arxiv.org/abs/2010.05526} {\path{arXiv:2010.05526}}.

\bibitem{LDTA}
A.~Dembo and O.~Zeitouni.
\newblock {\em Large Deviations Techniques and Applications}.
\newblock Stochastic Modelling and Applied Probability. Springer Berlin
  Heidelberg, 2009.

\bibitem{Dem01}
Amir Dembo, Alberto Gandolfi, and Harry Kesten.
\newblock {Greedy lattice animals: negative values and unconstrained maxima}.
\newblock {\em The Annals of Probability}, 29(1):205 -- 241, 2001.
\newblock \href {https://doi.org/10.1214/aop/1008956328}
  {\path{doi:10.1214/aop/1008956328}}.

\bibitem{Deu99}
Jean-Dominique Deuschel and Ofer Zeitouni.
\newblock On increasing subsequences of i.i.d. samples.
\newblock {\em Combinatorics, Probability and Computing}, 8(3):247–263, 1999.
\newblock \href {https://doi.org/10.1017/S0963548399003776}
  {\path{doi:10.1017/S0963548399003776}}.

\bibitem{Gan94}
Alberto Gandolfi and Harry Kesten.
\newblock Greedy lattice animals ii: Linear growth.
\newblock {\em The Annals of Applied Probability}, 4(1):76--107, 1994.
\newblock URL: \url{http://www.jstor.org/stable/2245045}.

\bibitem{Gou08}
Jean-Baptiste Gou{\'e}r{\'e} and R{\'e}gine Marchand.
\newblock {Continuous first-passage percolation and continuous greedy paths
  model: Linear growth}.
\newblock {\em The Annals of Applied Probability}, 18(6):2300 -- 2319, 2008.
\newblock \href {https://doi.org/10.1214/08-AAP523}
  {\path{doi:10.1214/08-AAP523}}.

\bibitem{Gou17}
Jean-Baptiste Gou{\'e}r{\'e} and Marie Th{\'e}ret.
\newblock {Positivity of the time constant in a continuous model of first
  passage percolation}.
\newblock {\em Electronic Journal of Probability}, 22(none):1 -- 21, 2017.
\newblock \href {https://doi.org/10.1214/17-EJP67}
  {\path{doi:10.1214/17-EJP67}}.

\bibitem{Joh20}
Kurt Johansson.
\newblock Shape fluctuations and random matrices.
\newblock {\em Communications in Mathematical Physics}, 209:437--476, 2000.
\newblock \href {https://doi.org/10.1007/s002200050027}
  {\path{doi:10.1007/s002200050027}}.

\bibitem{Ver77}
A.~M. Vershik S.~V. Kerov.
\newblock Asymptotic behavior of the plancherel measure of the symmetric group
  and the limit form of young tableaux.
\newblock {\em Sov. Math., Dokl.}, 18:527--531, 1977.

\bibitem{KestenStFlour}
Harry Kesten.
\newblock {\em Ecole d'Ete de Probabilites de Saint Flour XIV, 1984}.
\newblock École d'Été de Probabilités de Saint-Flour, 1180. Springer Berlin
  Heidelberg, Berlin, Heidelberg, 1st ed. 1986. edition, 1986.

\bibitem{Kes87}
Harry Kesten.
\newblock {Surfaces with minimal random weights and maximal flows: A higher
  dimensional version of first-passage percolation}.
\newblock {\em Illinois Journal of Mathematics}, 31(1):99 -- 166, 1987.
\newblock \href {https://doi.org/10.1215/ijm/1255989405}
  {\path{doi:10.1215/ijm/1255989405}}.

\bibitem{LastPenrose}
G.~Last and M.~Penrose.
\newblock {\em Lectures on the Poisson Process}.
\newblock Institute of Mathematical Statistics Textbooks. Cambridge University
  Press, 2017.

\bibitem{Log77}
B.F Logan and L.A Shepp.
\newblock A variational problem for random young tableaux.
\newblock {\em Advances in Mathematics}, 26(2):206--222, 1977.
\newblock \href {https://doi.org/10.1016/0001-8708(77)90030-5}
  {\path{doi:10.1016/0001-8708(77)90030-5}}.

\bibitem{Mar02}
James~B. Martin.
\newblock Linear growth for greedy lattice animals.
\newblock {\em Stochastic Processes and their Applications}, 98(1):43--66,
  2002.
\newblock \href {https://doi.org/10.1016/S0304-4149(01)00142-9}
  {\path{doi:10.1016/S0304-4149(01)00142-9}}.

\bibitem{Sep98}
Timo Sepp{\"a}l{\"a}inen.
\newblock Large deviations for increasing sequences on the plane.
\newblock {\em Probability Theory and Related Fields}, 112:221--244, 1998.

\bibitem{Article4}
Julien Verges.
\newblock Large deviation principle at speed $n$ for the random metric in
  first-passage percolation, 2024.
\newblock URL: \url{https://arxiv.org/abs/2412.03320}, \href
  {https://arxiv.org/abs/2412.03320} {\path{arXiv:2412.03320}}.

\bibitem{Article1}
Julien Verges.
\newblock Large deviation principle at speed $n^d$ for the random metric in
  first-passage percolation, 2024.
\newblock URL: \url{https://arxiv.org/abs/2404.09589}, \href
  {https://arxiv.org/abs/2404.09589} {\path{arXiv:2404.09589}}.

\bibitem{Article3}
Julien Verges.
\newblock Law of large numbers for greedy animals and paths in a {Poissonian}
  environment, 2024.
\newblock URL: \url{https://arxiv.org/abs/2410.15771}, \href
  {https://arxiv.org/abs/2410.15771} {\path{arXiv:2410.15771}}.

\bibitem{Article2}
Julien Verges.
\newblock Law of large numbers for greedy animals and paths in an ergodic
  environment, 2024.
\newblock URL: \url{https://arxiv.org/abs/2409.14773}, \href
  {https://arxiv.org/abs/2409.14773} {\path{arXiv:2409.14773}}.

\bibitem{Zha18}
Yu~Zhang.
\newblock Limit theorems for maximum flows on a lattice.
\newblock {\em Probability Theory and Related Fields}, 171(1):149--202, Jun
  2018.
\newblock \href {https://doi.org/10.1007/s00440-017-0775-z}
  {\path{doi:10.1007/s00440-017-0775-z}}.

\end{thebibliography}
\end{document}